\theoremstyle{plain}
\newtheorem{thm}{Theorem}
\newtheorem{lem}[thm]{Lemma}
\newtheorem{cor}[thm]{Corollary}
\newtheorem{prop}[thm]{Proposition}
\newtheorem{conj}[thm]{Conjecture}
\theoremstyle{definition}
\newtheorem{rmk}[thm]{Remark}
\newtheorem{ex}[thm]{Example}
\newtheorem{claim}[thm]{Claim}
\numberwithin{thm}{section} \numberwithin{equation}{section}
\newcommand{\eq}[2]{\begin{equation}\label{#1}#2 \end{equation}}
\newcommand{\ga}[2]{\begin{gather}\label{#1}#2 \end{gather}}
\newcommand{\Pic}{{\rm Pic}}
\newcommand{\Div}{{\rm div}}
\newcommand{\Hom}{{\rm Hom}}
\newcommand{\Spec}{{\rm Spec \,}}
\newcommand{\sC}{{\mathcal C}}
\newcommand{\sH}{{\mathcal H}}
\newcommand{\sI}{{\mathcal I}}
\newcommand{\sK}{{\mathcal K}}
\newcommand{\sO}{{\mathcal O}}
\newcommand{\sR}{{\mathcal R}}
\newcommand{\C}{{\mathcal  C}}
\newcommand{\Q}{{\mathbb Q}}
\newcommand{\Z}{{\mathbb Z}}
\newcommand{\et}{{\rm \acute{e}t}}
\newcommand{\Nis}{{\rm Nis}}
\newcommand{\CH}{{\rm CH}}
\newcommand{\Zcyc}{{\rm Z}}
\newcommand{\ch}{{\rm ch}}
\newcommand{\sm}{{\rm sm}}
\newcommand{\sing}{{\rm sing}}
\begin{document}
\makeatletter
\safe@footnotetext{\emph{Date}: March 27th, 2015; revised on April 29th, 2016.}
\makeatother

\begin{frontmatter}
\title{A restriction isomorphism for cycles of relative dimension zero}
\runtitle{A restriction isomorphism for cycles of relative dimension zero}

\author{\fnms{Moritz} \snm{Kerz}\thanksref{t1}\ead[label=e1]{moritz.kerz@mathematik.uni-regensburg.de}}
\address{Fakult\"at f\"ur Mathematik, Universit\"at Regensburg,\\93040 Regensburg, Germany\\
\printead{e1}}
\thankstext{t1}{Supported by the DFG
Emmy Noether-Nachwuchsgruppe ``Arithmetik \"uber endlich erzeugten K\"orpern'', the DFG
SFB 1085 and the Institute for Advanced Study.}

\author{\fnms{H{\'e}l{\`e}ne} \snm{Esnault}\thanksref{t2}\ead[label=e2]{esnault@math.fu-berlin.de}}
\thankstext{t2}{Supported by the Einstein Foundation and the ERC Advanced Grant 226257.}
\address{FU Berlin, Mathematik, Arnimallee 3, 14195 Berlin, Germany\\
\printead{e2}}

\author{\fnms{Olivier} \snm{Wittenberg}\corref{}\ead[label=e3]{wittenberg@dma.ens.fr}}
\address{D\'epartement de math\'ematiques et applications,
\'Ecole normale\\ sup\'erieure, 45~rue d'Ulm,
75230 Paris Cedex 05,
France\\
\printead{e3}}

\runauthor{Moritz Kerz, H\'el\`ene Esnault and Olivier Wittenberg}

\begin{abstract} 
We study the restriction map to the closed fiber of a regular projective scheme over an excellent henselian
discrete valuation ring, for a cohomological version
of the Chow group of relative zero-cycles. Our main result extends the work of Saito--Sato
\cite{SS10} to general perfect residue fields.
\end{abstract}

\end{frontmatter}

\section{Introduction} 

Let $A$ be an excellent henselian discrete valuation ring with perfect residue field $k$ of
exponential characteristic $p$.
Let $X/A$ be a regular connected scheme which is projective and flat over $A$, such that the reduced special fiber $Y\subset X$ is
a simple normal crossings divisor. Let $d$ be the dimension of $Y$.

\smallskip

Let $\Lambda$ be a commutative ring.
We denote by  $H^{i,j}(Y):=H^i(Y_{\rm cdh},\Lambda(j))$ the motivic cohomology group defined by
Suslin--Voevodsky as the cdh-cohomology of the motivic complex $\Lambda(j)$ \cite{SV02}. In particular, 
for $Y/k$ smooth, there is an isomorphism $H^{2j,j}(Y)\cong \CH^j(Y)_\Lambda$ onto
the Chow group of codimension $j$ cycles with $\Lambda$-coefficients.
Therefore one can view $H^{2d,d}(Y)$ as a
cohomological version of the Chow group of zero-cycles up to rational equivalence, while  for singular
$Y$,  the usual Chow groups form  a Borel--Moore homology theory.  
Note that for $d=1$ we have $H^{2,1}(Y)=\Pic(Y)\otimes \Lambda$.

\smallskip

We say that an integral one-cycle $Z\subset X$ is \emph{in general position} if $Z$ is flat over $A$
and $Z$ does not meet the singular locus of $Y$. Let $\Zcyc^g_1(X)$ be the free $\Z$-module
generated by these integral one-cycles in general position. An integral one-cycle in
general position $Z\subset X$
can be restricted to a zero-cycle  $[Y\cap Z]\in \Zcyc_0(Y^\sm)$  on the smooth locus
$Y^\sm$ of $Y$. This yields  a homomorphism of abelian groups
\[
\tilde \rho: \Zcyc_1^g(X) \to \Zcyc_0(Y^\sm).
\]   

For a closed point $y\in Y^\sm$, there is a purity isomorphism  $\Lambda \cong H_y^{2d,d}(Y)$ for motivic
cohomology with support in $y$. These isomorphisms induce a homomorphism
of $\Lambda$-modules $\Zcyc_0(Y^\sm)_\Lambda\to H^{2d,d}(Y)$.  On the other hand, denoting by $\CH_1(X)$ the Chow group of one-cycles on $X$, 
one has canonical homomorphisms  $\Zcyc_1^g(X)\hookrightarrow \Zcyc_1(X) \to \CH_1(X)$. 
We can now formulate the main theorem of this note. 

\begin{thm}\label{thm.main} 
Let $\Lambda=\Z/m\Z$ with $m$ prime to $p$.
\begin{itemize}
\item[1)]  If $k$ is finite or algebraically closed, or $(d-1)!$ is prime to $m$, or $A$
  has equal characteristic, or $X/A$ is smooth, then there is a unique $\Lambda$-module homomorphism $\rho$ making the diagram
\[
\xymatrix{
\Zcyc_1^g(X)_\Lambda \ar[r]^{\tilde \rho} \ar[d] & \Zcyc_0(Y^\sm)_\Lambda \ar[d] \\
\CH_1(X)_\Lambda \ar[r]_-{\rho}  & H^{2d,d}(Y)
}
\]
commutative. 
\item[2)] If $\rho:\CH_1(X)_\Lambda \to H^{2d,d}(Y)$ is a homomorphism of $\Lambda$-modules making the above diagram commutative, then it is an isomorphism.
\end{itemize}
\end{thm}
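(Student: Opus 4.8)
\emph{Strategy and reductions.} By the Chinese remainder theorem I would at once reduce to $m=\ell^n$ with $\ell$ a prime $\neq p$. Since $X$ is flat over the one-dimensional ring $A$ and $Y\subset X$ is a divisor, $\dim X=d+1$, so $\CH_1(X)=\CH^d(X)$: we are comparing codimension-$d$ cycles on the $(d+1)$-fold $X$ with the top motivic cohomology in weight $d$ of the $d$-fold $Y$. Both groups are genuinely motivic — neither $\CH^d(X)_\Lambda$ nor $H^{2d,d}(Y)$ reduces to étale cohomology, both being able to carry ``transcendental'' classes coming from the open generic fibre $X_\eta=X\setminus Y$ — so the argument cannot proceed by a single étale realisation; one must compare the two groups by a simultaneous d\'evissage. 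The backbone is: (i) a moving lemma, which yields existence and uniqueness of $\rho$ in 1) once the relevant obstruction (controlled by the listed hypotheses) is seen to vanish, and which also gives the surjectivity half of 2); (ii) a niveau/coniveau d\'evissage, reducing the injectivity half of 2) to statements in Galois cohomology of residue fields to which the Bloch--Kato (norm residue) theorem applies.

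\emph{Moving lemma; existence, uniqueness and surjectivity.} First I would prove the (unconditional) moving lemma: every class of $\CH_1(X)_\Lambda$ is represented by a one-cycle in general position, i.e.\ one flat over $A$ and disjoint from $Y^{\sing}$; on the regular projective $A$-scheme $X$ this is a Chow-type argument sweeping a curve off the codimension-$\ge 2$ locus $Y^{\sing}$ within its rational-equivalence class, and it is exactly why 2) needs no hypothesis. It forces $\rho$, if it exists, to be the map induced by $Z\mapsto\mathrm{(purity)}([Y\cap Z])$, whence uniqueness; and $\rho$ exists precisely when this composite annihilates the classes of cycles in general position that are rationally trivial on $X$. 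Checking the latter amounts to taking a rational equivalence on $X$ — a family of curves in $X\times\P^1$ with rational functions — and moving it into a position general with respect to $Y\times\P^1$, so that it restricts to an honest rational equivalence on $Y^{\sm}$; this ``moving of rational equivalences'' is where the hypotheses of 1) are used: over finite or algebraically closed $k$ one has enough Bertini room (after a harmless finite extension and a transfer), in equal characteristic a Bertini-type theorem over $A$ applies, for smooth $X/A$ one deforms transversally, and in general the residual obstruction lies in a group killed by $(d-1)!$. For surjectivity of $\rho$: by cdh-descent and Bertini on $Y$ the purity map $\Zcyc_0(Y^{\sm})_\Lambda\to H^{2d,d}(Y)$ is onto, and each closed point $y\in Y^{\sm}$ lifts — since $\sO_{X,y}$ is regular and $Y$ is principal near $y$, it is the transverse intersection with $Y$ of a complete-intersection curve, which Bertini makes disjoint from $Y^{\sing}$, its finitely many further points on $Y^{\sm}$ being handled inductively.

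\emph{Injectivity — the main obstacle.} Given $Z$ in general position with $\mathrm{(purity)}([Y\cap Z])=0$ in $H^{2d,d}(Y)$, I must deduce that $Z$ is rationally trivial in $\CH_1(X)_\Lambda$; this is the crux. The plan is to filter both sides by (co)niveau. On the $X$-side the niveau spectral sequence for $X/A$, together with the localisation triangle of $(Y,X,X_\eta)$, expresses $\CH^d(X)_\Lambda$ through the Galois cohomology of the residue fields of points of $X$; on the $Y$-side, cdh-descent for the cover $Y=\bigcup_i Y_i$ with strata $Y_I=\bigcap_{i\in I}Y_i$ (smooth, of dimension $d-|I|+1$) expresses $H^{2d,d}(Y)$ through the weight-$d$ motivic cohomology of the $Y_I$. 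On every stratum of positive codimension in $Y$ the relevant motivic cohomology sits in the Beilinson--Lichtenbaum range and is therefore \'etale (Voevodsky--Rost), and proper base change along the henselian pair $(X,Y)$ matches these contributions with the corresponding ones from $X$; the genuinely motivic input sits on the top stratum $\bigsqcup_i Y_i$, smooth of dimension $d$, and is reduced to lower $d$ by taking a generic hyperplane section — regular, flat over $A$, with special fibre again an snc divisor, which is where a Bertini theorem over the henselian ring $A$ (and perfectness of $k$) is needed. The base case $d=1$ is concrete: there $\CH_1(X)_\Lambda=\Pic(X)/m$ and $H^{2,1}(Y)=\Pic(Y)/m$, and $\rho$ is restriction, an isomorphism by the Kummer sequence and proper base change for $\G_m$ (identifying the cokernels inside $H^2_{\et}(\,\cdot\,,\G_m)$); the case $d=0$ is trivial. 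In the $d-1$ successive reductions the comparison maps are isomorphisms only up to a factor bounded by the dimension of the slice, so the total ambiguity divides $(d-1)!$ and is inverted mod $m$ inside the argument; this numerology is intrinsic to 2) and is the reason the same factor reappears as a hypothesis in 1). The principal difficulties I anticipate are: (a) making the two d\'evissages compatible with $\rho$ term by term; (b) establishing the Bertini statements over $A$ so that the slices are regular with snc special fibre and meet $Z$ properly; and (c) controlling, in the inductive step, the interplay between the higher Chow group of the open generic fibre $X_\eta$ and the cdh-contributions of $Y^{\sing}$ — the latter being the precise substitute, in this generality, for the arithmetic global duality over finite fields used by Saito--Sato.
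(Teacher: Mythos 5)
Your uniqueness argument (surjectivity of $\Zcyc_1^g(X)_\Lambda\to\CH_1(X)_\Lambda$ via the Gabber--Liu--Lorenzini moving lemma) matches the paper, but the core of part 1) in your proposal is a gap. You propose to construct $\rho$ by moving a rational equivalence on $X$ (a family in $X\times\P^1$) into general position with respect to $Y\times\P^1$; this is exactly the direct approach the paper rules out --- Chow-type moving lemmas for rational equivalences are not available in mixed characteristic, and no amount of ``Bertini room'' or transfer arguments is known to supply them. The paper instead constructs $\rho$ by three unrelated indirect routes tied to the listed hypotheses: for $k$ finite or algebraically closed, via the \'etale cycle map and the identification $\C(Y)\cong H^{2d}_{\et}(Y,\Lambda(d))$ (using the Kato conjecture of Kerz--Saito in the finite case) --- note this contradicts your blanket claim that neither side reduces to \'etale cohomology; in equal characteristic, via the Gersten conjecture for Milnor $K$-theory and the isomorphism $\C(Y)\cong H^d(Y_{\Nis},\sK^M_{Y,d})$; and when $(d-1)!$ is invertible, via $K_0$, the Chern class $c_d$ and Grothendieck--Riemann--Roch, where $c_d([k(y)])=(-1)^{d-1}(d-1)!\,{\rm Gy}_y(1)$ is the source of the factorial. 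Your sketch names none of these inputs, so existence of $\rho$ is not established under any of the five hypotheses.

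For part 2) your plan also diverges from what can be made to work, and contains an error of principle: you assert that the d\'evissage introduces an ambiguity dividing $(d-1)!$ which is ``intrinsic to 2)'', but part 2) holds for every $m$ prime to $p$ with no factorial hypothesis, so an argument with that loss would prove a strictly weaker statement. Moreover, the proposed matching of the niveau pieces of $\CH^d(X)_\Lambda$ with the cdh-strata of $H^{2d,d}(Y)$ ``by proper base change along the henselian pair'' is not available: proper base change applies to \'etale cohomology, not to $\CH^d(X)_\Lambda$, and the contribution of the generic fibre $X_\eta$ (which you flag as difficulty (c)) is precisely what such a comparison cannot control. The paper's route is different and unconditional: it first proves the geometric presentation $H^{2d,d}(Y)\cong\C(Y)$ with type 1/type 2 relations (Sections 6--7, via motivic Gysin triangles and Suslin homology), then constructs an explicit inverse $\gamma$ by (a) the canonical lifting of zero-cycles (Proposition~\ref{key.prop}), proved by induction on $d$ using Bertini theorems over $A$ together with Bloch's normalization trick (re-embedding the normalization of a non-regular $Z$ in $\P^M_X$), with the $d=1$ base case handled by Kummer theory and proper base change --- the one place your sketch and the paper agree --- and (b) the verification that the type 1 and type 2 relations lift to rational equivalences on $X$ (resp.\ on a blow-up $\check X$). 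Your proposal contains the lifting of individual points but neither the well-definedness of the lift in $\CH_1(X)_\Lambda$ for non-regular $Z$ nor the factorization through the relations defining $H^{2d,d}(Y)$, which is where the actual work lies.
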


Up to an explicit presentation of $ H^{2d,d}(Y)$, established in Theorem~\ref{thm.motcomp} below,
part 1) is discussed in Sections~\ref{sec:rest} and~\ref{sec:proof} and part 2) is  Corollary~\ref{cor.isorho}.

\smallskip

In the case when $k$ is finite or algebraically closed, Theorem~\ref{thm.main} is equivalent to and reproves 
results of Saito--Sato \cite{SS10} in view of the \'etale
realization isomorphism, Proposition~\ref{prop.compet} below. In fact, in some sense, our arguments extend
to general residue fields $k$
the simplified approach to the results of Saito and Sato which is developed in \cite[App.]{EW13}.

\smallskip

By the moving lemma of Gabber--Liu--Lorenzini \cite{GLL13},  $\CH_1(X)$ is
generated by $\Zcyc_1^g(X)$, so uniqueness in Theorem~\ref{thm.main} is clear.  Surprisingly,
one of the main difficulties is the construction of $\rho$, which, as of today, can be performed only under
the extra assumptions of 1).  It is expected that the homomorphism $\rho$ with the
properties of part 1) exists in general for any ring of
coefficients $\Lambda$. 
So far, we do not have the necessary motivic techniques at our disposal to show this. 

\smallskip

The proof of part 1), in Section~\ref{sec:proof},  rests,
in equal characteristic, on the Gersten
conjecture for Milnor $K$-theory  due to the first author
\cite{Ker09};
when the residue field is finite,
on the \'etale realization  using the
Kato conjecture established in \cite{KS12};
when $(d-1)!$ is
prime to $m$, on algebraic $K$-theory and the Grothendieck--Riemann--Roch theorem \cite{BS98}.

\smallskip

As for 2), the proof  in Sections~\ref{sec:lift} and~\ref{sec:inverse} relies on an explicit geometric
presentation of $H^{2d,d}(Y)$, which is discussed in Sections~\ref{sec.cg1}  and~\ref{sec.motive}, together with
an idea of  Bloch \cite[App.]{EW13} which enables one to construct an inverse to $\rho$ in
Section~\ref{sec:inverse}. Note that for 2), it is essential that $\Lambda=\Z/m\Z$ and that  $m$ be prime to $p$.

\smallskip
When $X/A$ is smooth,
the restriction map $\rho$ factors as
\[
\CH_1(X) \twoheadrightarrow \CH_0(X_K) \xrightarrow{sp} \CH_0(Y)
\]
where~$sp$ denotes the specialization homomorphism \cite[20.3.1]{Ful84}.
Here we use the identification $H^{2d,d}(Y)=\CH_0(Y)_\Lambda$ for $Y/k$ smooth and $
\Lambda$ arbitrary \cite[Thm.\ 19.1]{MVW}.
So one deduces from Theorem~\ref{thm.main}
\begin{cor} \label{cor.main} Let $\Lambda=\Z/m\Z$ with $m$ prime to $p$. If $X/A$ is smooth, the 
specialization homomorphism $sp: \CH_0(X_K)_\Lambda \to \CH_0(Y)_\Lambda$ is an isomorphism.
\end{cor}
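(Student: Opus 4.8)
The plan is to read the statement off from Theorem~\ref{thm.main}, once one knows that, for $X/A$ smooth, the motivic restriction map $\rho$ is exactly the specialization map $sp$ precomposed with the canonical surjection onto zero-cycles on the generic fiber.

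First I would record what Theorem~\ref{thm.main} provides in this situation. Since $X/A$ is smooth, its special fiber $Y$ is smooth and projective over $k$ and of pure dimension $d$; hence $H^{2d,d}(Y)=\CH^d(Y)_\Lambda=\CH_0(Y)_\Lambda$ by \cite[Thm.\ 19.1]{MVW}, and under this identification the homomorphism $\Zcyc_0(Y^\sm)_\Lambda=\Zcyc_0(Y)_\Lambda\to H^{2d,d}(Y)$ built from the purity isomorphisms is the canonical surjection onto the Chow group. The case ``$X/A$ smooth'' of Theorem~\ref{thm.main}(1) provides the homomorphism $\rho\colon\CH_1(X)_\Lambda\to\CH_0(Y)_\Lambda$ fitting into the commutative square, and Theorem~\ref{thm.main}(2) says that this $\rho$ is an isomorphism.

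Next I would set up the factorization $\rho=sp\circ\pi$, where $\pi\colon\CH_1(X)_\Lambda\twoheadrightarrow\CH_0(X_K)_\Lambda$ is the flat pullback along the open immersion $X_K\hookrightarrow X$ (it is surjective because the closure in $X$ of a closed point of $X_K$ is an integral one-cycle, automatically flat over $A$, which restricts to that point), and $sp$ is the specialization homomorphism of \cite[20.3]{Ful84}. By the moving lemma \cite{GLL13}, $\CH_1(X)$ is generated by $\Zcyc_1^g(X)$, so it is enough to check the identity on the class of a cycle $Z\in\Zcyc_1^g(X)$. For such $Z$ one has $\pi([Z])=[Z_K]$, and $sp([Z_K])=[Y\cap Z]$ in $\CH_0(Y)_\Lambda$ by the definition of $sp$ on cycles flat over $A$; on the other hand $\rho([Z])$ is the image of $\tilde\rho(Z)=[Y\cap Z]\in\Zcyc_0(Y^\sm)_\Lambda$ under the purity map, which by the previous paragraph is again $[Y\cap Z]\in\CH_0(Y)_\Lambda$. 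Hence $\rho=sp\circ\pi$.

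To conclude, I would argue formally: since $\rho$ is an isomorphism, $sp$ is surjective; and $\pi$ is injective, since $\pi(\xi)=0$ gives $\rho(\xi)=0$ and hence $\xi=0$. Therefore $\pi$ is an isomorphism and $sp=\rho\circ\pi^{-1}$ is an isomorphism. The only non-formal point is the identification $\rho=sp\circ\pi$ on cycles in general position, i.e.\ matching the motivic/purity description of $\rho$ coming from Theorem~\ref{thm.main} with the intersection-theoretic definition of $sp$; but on such cycles both maps are literally ``intersect with the special fiber,'' so I do not expect any real obstacle here, as Theorem~\ref{thm.main} does all the work.
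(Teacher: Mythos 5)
Your proposal is correct and is essentially the paper's own argument: the paper deduces Corollary~\ref{cor.main} from Theorem~\ref{thm.main} via exactly the factorization $\CH_1(X)_\Lambda \twoheadrightarrow \CH_0(X_K)_\Lambda \xrightarrow{sp} \CH_0(Y)_\Lambda$ of $\rho$, together with the identification $H^{2d,d}(Y)=\CH_0(Y)_\Lambda$ from \cite[Thm.\ 19.1]{MVW}. You merely spell out the details the paper leaves implicit (checking $\rho=sp\circ\pi$ on cycles in general position via \cite{GLL13}, and the formal diagram chase), and these are all fine.
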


As an application of Theorem~\ref{thm.main}, we show, in Proposition~\ref{prop.laur}, that for a formal Laurent power series field $K=k((\pi))$
with $[k:\Q_p]<\infty$, the Chow group $\CH_0(X_K)_\Lambda$ is finite for
$\Lambda=\Z/m\Z$ assuming $p$ is prime
to $m$.

In the final section we state a conjecture describing $\CH_1(X)_\Lambda$ for the case $\Lambda=\Z/m\Z$ with $m$ not
necessarily prime to $p$. We also explain in which motivic bidegrees we expect the analogue
of Theorem~\ref{thm.main} to be true.

\section{The cycle group $\C(Y)$  of a simple normal crossings variety over a perfect field}  \label{sec.cg1}

\subsection{ Simple normal crossings varieties} \label{ss:snc}

In this section we make precise the notion of simple normal crossings varieties we shall use throughout the article. 

\smallskip 

Let $k$ be a perfect field of exponential characteristic $p$ with algebraic closure $\bar k$. 
 A  $d$-dimensional $k$-scheme $Y$ is a {\it normal crossings variety} (nc
 variety)  if it is reduced, quasi-projective (which implies of finite type) and if
for any closed point  $y\in Y_{\bar k}$, 
the henselized local ring 
$ \sO_{Y_{\bar k},y}^h$ is $\bar k$-isomorphic  to $\bar k [y_1,\ldots, y_{d+1}]^h /( \prod_{i=1}^r
 y_i) $ ($1\le r\le d+1$).
The variety $Y$ is a {\it simple normal crossings variety} (snc variety) if it is a nc variety and additionally every
irreducible component of $Y$ is smooth over $k$.
Here the henselization $R[y_1,\dots ,y_r]^h$ of a polynomial ring over a ring $R$ means
henselization in the ideal $(y_1,\ldots , y_r)$.

By a {\it simple normal crossings divisor} (snc divisor) we denote what is called a strict normal crossings
divisor in \cite[2.4]{dJ96}. We use the notation of {\it normal crossings divisor} (nc
divisor) as in {\it loc.\ cit}.  We recall that if~$Y$ is a nc divisor on a scheme~$X$,
then by definition~$X$ is regular at the points of~$Y$.

\smallskip 

Let $A$ be a henselian discrete valuation ring with residue field $k$.  Let $\bar A$ be the
integral unramified extension of $A$ with residue field $\bar k$. For a scheme $X/A$ we
write $X_{\bar A}$ for the scheme $X\times_A \bar A$.
We denote by $\pi$ a fixed prime element of $A$ (thus of $\bar A$).
\begin{lem} \label{prop:sncv}
Let $X$ be a scheme which is flat and of finite type over $A$. If the reduced special
fiber $Y$ is a nc divisor,  the henselized local ring
$\sO_{X_{\bar A},y}^h$ at a closed point $y$ of $ Y_{\bar k}$  is $\bar A$-isomorphic to  \eq{eq.locpre}{\bar A[y_1,\ldots,
y_{d+1}]^h /(\prod_{i=1}^r y_i ^{m_i} -\pi u) , } where  $u$ is a unit of
$\bar A [y_1,\ldots, y_{d+1}]^h$ and the $m_i$ are positive integers ($r\ge 1$).
\end{lem}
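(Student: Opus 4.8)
The plan is to work at the closed point $y \in Y_{\bar k}$ and leverage the structure theory of the henselized local ring $\sO_{X_{\bar A},y}^h$, using that $X$ is regular at the points of $Y$ (part of the hypothesis that $Y$ is a nc divisor) together with flatness over $A$. First I would reduce to a purely local statement: after base change to $\bar A$, the local ring $R := \sO_{X_{\bar A},y}^h$ is a regular henselian local ring of dimension $d+1$, with maximal ideal $\mathfrak m$, residue field $\bar k$, and the image of $\pi$ in $R$ is nonzero (flatness of $X/A$ forces $R$ to be $A$-torsion free, so in particular $\pi$-torsion free). The reduced special fiber $Y_{\bar k}$ near $y$ is cut out, inside $\Spec R$, by $(\sqrt{\pi R})$, and the nc-divisor hypothesis says precisely that this reduced divisor has a regular system of parameters $y_1,\dots,y_{d+1}$ of $R$ such that $\sqrt{\pi R} = (y_1\cdots y_r)$ for some $1 \le r \le d+1$.

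Next, since $R$ is a UFD (being a regular local ring), and $\pi R$ has radical $(y_1\cdots y_r)$, unique factorization gives $\pi = (\text{unit}) \cdot y_1^{m_1}\cdots y_r^{m_r}$ for positive integers $m_i$, because the $y_i$ are pairwise non-associate primes of $R$ and no other prime divides $\pi$ (its radical is supported exactly on those $r$ divisors). Absorbing the unit, we obtain the relation $\prod_{i=1}^r y_i^{m_i} = \pi u$ with $u \in R^\times$. The remaining task is to identify $R$ itself with the henselization $\bar A[y_1,\dots,y_{d+1}]^h/(\prod_{i=1}^r y_i^{m_i} - \pi u)$ in the ideal generated by the $y_i$. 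For this I would use the universal property of henselization together with Hensel's lemma / Artin approximation style arguments: the elements $y_1,\dots,y_{d+1} \in \mathfrak m$ induce a local $\bar A$-algebra map from the polynomial ring, hence from its henselization $\bar A[y_1,\dots,y_{d+1}]^h$ (which is the henselization of $\bar A[\underline y]$ at the origin of the special fiber); the relation $\prod y_i^{m_i} - \pi u = 0$ in $R$ makes this factor through the quotient $S := \bar A[y_1,\dots,y_{d+1}]^h/(\prod y_i^{m_i} - \pi u)$, giving a local homomorphism $\varphi \colon S \to R$. Both rings are henselian local, flat over $\bar A$, with residue field $\bar k$, and one checks $\varphi$ induces an isomorphism on $\mathfrak m/\mathfrak m^2$ (the $y_i$ form a regular system of parameters on both sides — note $S$ is regular of dimension $d+1$ because $\pi$ and $u$ are units away from the divisor and the relation is a regular-sequence condition), and that $\varphi$ is compatible with the $\mathfrak m$-adic filtrations; since both completions are visibly isomorphic to $\bar A{}^\wedge[[y_1,\dots,y_{d+1}]]/(\prod y_i^{m_i}-\pi u)$ (here one uses that $\bar A$ is excellent so its completion is well-behaved, or simply works over the completion throughout), $\varphi$ is an isomorphism after completion, and a local map of henselian local Noetherian rings that is an isomorphism on completions and is essentially of finite type is an isomorphism.

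The main obstacle I anticipate is the last identification — promoting the obvious surjection $S \surj R$ (or constructing $\varphi$ in the right direction) to an isomorphism at the henselian, rather than complete, level, and in particular checking that $S$ is reduced and $(d+1)$-dimensional so that the dimension count is valid. A clean way around the subtlety is to argue as follows: the complete local ring $\hat R$ is, by the Cohen structure theorem applied to the $\bar k$-algebra situation and the fact that $Y_{\bar k}$ is snc near $y$, isomorphic to $\bar A{}^\wedge[[y_1,\dots,y_{d+1}]]/(f)$ where $f$ generates the principal ideal $\pi \hat R$; and unique factorization in the complete regular local ring forces $f = u \prod y_i^{m_i}$ up to a unit. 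Then one descends from the completion to the henselization using that $R$ and $S$ are both the henselization (at the relevant prime/maximal ideal) of a common finite-type $\bar A$-scheme — namely $\Spec \bar A[y_1,\dots,y_{d+1}]/(\prod y_i^{m_i} - \pi u)$ for a representative $u$ chosen in the henselized polynomial ring but approximated, via Artin approximation over the excellent ring $\bar A$, by an element of the finite-type ring — so they agree. I would present the argument in the completion-first, then descend-via-excellence order, since that isolates the one genuinely nontrivial input (unique factorization plus Cohen structure) and makes the henselian statement a formal consequence.
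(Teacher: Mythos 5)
Your first half coincides with the paper's argument: choose a regular system of parameters $y_1,\ldots,y_{d+1}$ of $R=\sO_{X_{\bar A},y}^h$ adapted to the normal crossings divisor, and use that the regular local ring $R$ is a UFD (equivalently, compare divisors on $\Spec R$) to write $\pi$ as a unit times $y_1^{m_1}\cdots y_r^{m_r}$. Where you diverge is in identifying $R$ with the model ring $S=\bar A[y_1,\ldots,y_{d+1}]^h/(\prod_{i=1}^r y_i^{m_i}-\pi u)$. The paper does this without completions: the canonical surjection $\bar A[y_1,\ldots,y_{d+1}]^h \twoheadrightarrow R$ factors through $S$, and since $S$ is an integral domain of dimension $d+1=\dim R$, the induced surjection $S\to R$ must be injective. (Integrality of $S$ amounts to irreducibility of $\prod y_i^{m_i}-\pi u$, which holds because modulo $(y_1,\ldots,y_{d+1})$ this element has valuation exactly one in $\bar A$, so in any factorization one factor is a unit.) Your plan instead computes $\hat R$ and descends from the completion; this can be made to work, but only via your second variant, and at the cost of Artin approximation over the excellent base --- considerably heavier machinery than the paper's dimension count.

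Two steps of your write-up need repair. First, the finishing lemma of your main route does not apply: the map $\varphi\colon S\to R$ between henselizations is not essentially of finite type (henselizations almost never are over one another), and without that hypothesis an isomorphism on completions does not force an isomorphism --- $S\to\hat S$ is the standard counterexample; from $\hat\varphi$ being an isomorphism you only get flatness, $\mathfrak m_S R=\mathfrak m_R$ and equal residue fields, which is precisely the situation of a completion. Second, in your ``clean way'', replacing $u$ by an approximation $u_0$ in the finite-type ring changes the defining equation, and it is neither clear nor necessary that the two quotients agree; the correct fix is to descend $u$ (and hence the relation) to a finite \'etale neighborhood of the localized polynomial ring, so that $S$ is literally the henselization of a finite-type $\bar A$-scheme at a point, and then quote Artin's theorem that points of finite-type schemes over an excellent base with isomorphic complete local rings have isomorphic henselized local rings. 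Finally, the phrase ``$f$ generates the principal ideal $\pi\hat R$'' is garbled: $f$ generates the kernel of $\bar A^{\wedge}[[y_1,\ldots,y_{d+1}]]\twoheadrightarrow\hat R$, and it is identified with $\prod y_i^{m_i}-\pi u$ (not with $u\prod y_i^{m_i}$, whose quotient would be non-regular, contradicting regularity of $X$ along $Y$) by the same irreducibility remark as above.
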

\begin{proof}
Choose generators $y_1,\ldots , y_{d+1}$ of the maximal ideal of $ \sO_{X_{\bar
    A},y}^h$ such that the product of the first $r$ defines the snc divisor $Y_{\bar k}$
on $X_{\bar A}$ around $y$. The divisor of $\pi$ on the regular scheme $\Spec \sO_{X_{\bar
    A},y}^h$ is of the form $\Div (y_1^{m_1}\cdots y_r^{m_r})$ for certain $m_i$.
So the canonical surjection 
\[
 \bar A [y_1,\ldots, y_{d+1}]^h \to  \sO_{X_{\bar  A},y}^h
\]
factors through a ring of the form \eqref{eq.locpre} with a unit $u$, since $\bar A [y_1,\ldots,
y_{d+1}]^h$ is regular. By dimension reasons this map is an
isomorphism as the ring \eqref{eq.locpre} is integral.
\end{proof}

\begin{lem}\label{lem.divvar}
Let $X$ be a scheme which is flat and of finite type over $A$.
The following conditions are equivalent:
\begin{itemize}
\item[(i)] the reduced special fiber $Y/k$ of $X$ is a
nc variety and $X$ is regular at the points of $Y$,
\item[(ii)]  $Y\subset X$ is a nc divisor (thus $X$ is regular at the points of Y). 
\end{itemize}
This assertion remains true if one replaces nc with snc.
\end{lem}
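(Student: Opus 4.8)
The plan is to reduce to a statement about strict henselizations, prove (ii)$\Rightarrow$(i) from Lemma~\ref{prop:sncv}, and prove (i)$\Rightarrow$(ii) using unique factorization in regular local rings. The clause ``$X$ regular at the points of $Y$'' occurs in both conditions — explicitly in (ii), as a hypothesis in (i) — so it is common ground, and the rest of each condition is \'etale-local. Since $X/A$ is of finite type and $\bar A$ is the strict henselization of $A$ ($k$ being perfect, $\bar k=k^{\rm sep}$), it therefore suffices to compare (i) and (ii) on the rings $R:=\mathcal O_{X_{\bar A},y}^h$ as $y$ ranges over the closed points of $Y_{\bar k}=(X_{\bar k})_{\rm red}$. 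For such $y$, $R$ is a regular local ring — hence a unique factorization domain — of dimension $n:=d+1$ (the special fibre of $X$ has local dimension $d$ at $y$: built into the nc-variety condition under (i), and a consequence of Lemma~\ref{prop:sncv} under (ii)), flat over $\bar A$, so $\pi$ is a nonzero element of $\mathfrak m_R$; and $\bar R:=(R/\pi R)_{\rm red}$ is $\mathcal O_{Y_{\bar k},y}^h$.

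For (ii)$\Rightarrow$(i) I would apply Lemma~\ref{prop:sncv} to obtain $R\cong\bar A[y_1,\dots,y_{d+1}]^h/(\prod_{i=1}^r y_i^{m_i}-\pi u)$; reducing modulo $\pi$ turns $\bar A$ into $\bar k$ and the relation into $\prod_{i=1}^r y_i^{m_i}$, and passing to the reduction yields $\bar R\cong\bar k[y_1,\dots,y_{d+1}]^h/(\prod_{i=1}^r y_i)$, the local model of an nc variety. For the snc refinement one observes in addition that the irreducible components of a strict normal crossings divisor are regular, hence smooth over the perfect field $k$.

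For (i)$\Rightarrow$(ii) the idea is that every branch of the special fibre is cut out by a single regular parameter — because $R$ is a UFD — and that the normal-crossings shape of the \emph{reduced} special fibre forces these parameters to be mutually transverse. In detail: the minimal primes of $R/\pi R$ are those of $\bar R$, hence there are exactly $r$ of them; being minimal over the principal ideal $(\pi)$ in the domain $R$, they have height one, hence are principal, say $(z_1),\dots,(z_r)$ with the $z_i$ pairwise non-associate primes of $R$; as the prime factors of $\pi$ are precisely these $z_i$, unique factorization gives $\pi=z_1^{a_1}\cdots z_r^{a_r}\cdot(\text{unit})$, whence $\bar R=R/(z_1\cdots z_r)$. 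Fixing an isomorphism $\bar R\cong S:=\bar k[u_1,\dots,u_n]^h/(u_1\cdots u_r)$ and relabelling the $u_i$ so that the minimal prime $(z_i)\bar R$ corresponds to $u_iS$, one computes, for each nonempty $I\subseteq\{1,\dots,r\}$,
\[
R/(z_i : i\in I)=\bar R/(z_i : i\in I)\bar R\cong S/(u_i : i\in I)S\cong\bar k[u_j : j\notin I]^h,
\]
which is regular of dimension $n-|I|$. Taking $I=\{1,\dots,r\}$ and invoking the standard fact that if $R/(f_1,\dots,f_s)$ is regular of dimension $\dim R-s$ then $f_1,\dots,f_s$ extend to a regular system of parameters, one concludes that $Y_{\bar k}=V(z_1\cdots z_r)$ presents $Y\subset X$ as an nc divisor at $y$. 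The snc version then follows since the components of $Y/k$, being smooth over the perfect field $k$, are regular.

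The step I expect to be the main obstacle is exactly this transversality claim within (i)$\Rightarrow$(ii): that each branch is a regular principal divisor is immediate, but showing that the $r$ branches jointly form part of a regular system of parameters genuinely uses the normal-crossings local model of the reduced special fibre — it is what makes the computation of $R/(z_i : i\in I)$ above succeed. Beyond that, the argument involves only routine bookkeeping relating $X$, $X_{\bar A}$, their strict henselizations, and de Jong's definition of a (strict) normal crossings divisor.
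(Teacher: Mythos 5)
Your proof is correct. For the implication (ii)$\Rightarrow$(i) you argue exactly as the paper does: apply Lemma~\ref{prop:sncv} to get the local model $\bar A[y_1,\dots,y_{d+1}]^h/(\prod_{i=1}^r y_i^{m_i}-\pi u)$, reduce modulo $\pi$ and pass to the reduction to land on the nc-variety model; the paper's proof consists of precisely this remark, since it declares this the only nontrivial implication. Where you differ is in emphasis: the paper treats (i)$\Rightarrow$(ii) as immediate and gives no argument, whereas you single out the transversality of the branches as the main point and prove it in detail --- the minimal primes of the reduced fibre have height one, hence are principal by unique factorization in the regular local ring $R$, so $\sqrt{\pi R}=(z_1\cdots z_r)$, and the computation $R/(z_1,\dots,z_r)\cong\bar k[u_j:j>r]^h$, regular of dimension $\dim R-r$, forces $z_1,\dots,z_r$ to extend to a regular system of parameters. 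That argument is sound; the only points needing the care you give them are the identification of the ideals $z_i\bar R$ with the minimal primes of $\bar R$ under the chosen isomorphism with the local model, and the dimension count $\dim R=d+1$ coming from flatness over the discrete valuation ring. So your write-up establishes the lemma in full, filling in the direction the paper leaves to the reader while coinciding with the paper on the direction it does prove.
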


\begin{proof}
The only nontrivial part is to show that if $Y$ is a nc divisor on $X$ it is
a nc variety over $k$. But this follows from Lemma~\ref{prop:sncv}.
\end{proof}

A $1$-dimensional closed subscheme $C$ of a snc variety~$Y$ is a {\it simple normal crossings
  subcurve}  (snc subcurve) if  its (scheme-theoretic) intersections with the irreducible components $Y_i$ of $Y$ are smooth and purely $1$-dimensional, if its intersection with $Y_i\cap Y_j$ is reduced and purely $0$-dimensional
for all $i\neq j$ and if its intersection with $Y_i\cap Y_j\cap Y_\ell$ is empty
for all $i\neq j\neq \ell\neq i$.
We stress that according to this
definition, if a smooth irreducible curve contained in~$Y_i$ is
a snc subcurve of~$Y$, then it must be disjoint from~$Y_i \cap Y_j$ for
every $j \neq i$.

\begin{lem} \label{prop:sncc}
A snc subcurve $C$ of a snc variety $Y$ is regularly immersed and is itself a one-dimensional snc variety.
\end{lem}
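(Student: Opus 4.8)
The plan is to check both assertions after the faithfully flat base change $Y_{\bar k}\to Y$ and on henselized local rings: being regularly immersed is local and descends along this base change; reducedness and the local condition defining an nc variety are likewise detected on $C_{\bar k}$; quasi-projectivity is inherited from $Y$; and $\dim C=1$ by hypothesis. So fix a closed point $c$ of $C_{\bar k}$ and let $r$ be the integer with $\sO^h_{Y_{\bar k},c}\cong\bar k[y_1,\dots,y_{d+1}]^h/(y_1\cdots y_r)$; since $Y$ is pure-dimensional with smooth components, $r$ is the number of irreducible components of $Y$ through the image of $c$. The first step is to note that $r\le 2$ once $c\in C$: if $r\ge 3$, then $c$ lies on three components $Y^{(1)},Y^{(2)},Y^{(3)}$ of $Y$, so $c\in C\cap Y^{(1)}\cap Y^{(2)}\cap Y^{(3)}$, contradicting the emptiness of triple intersections with $C$.

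If $r=1$, then $Y$ is regular at $c$ and equals its unique component there, so near $c$ the scheme $C$ coincides with the smooth $1$-dimensional scheme $C\cap Y^{(1)}$; hence $\sO^h_{C_{\bar k},c}$ is regular of dimension $1$, i.e. $\cong\bar k[z]^h$, and the ideal of $C$ in the regular ring $\sO^h_{Y_{\bar k},c}$, having regular quotient, is generated by a regular sequence. The substance is the case $r=2$. Write $R=\sO^h_{Y_{\bar k},c}=\bar k[y_1,\dots,y_{d+1}]^h/(y_1y_2)$, with minimal primes $\mathfrak p_1=(y_1)$ and $\mathfrak p_2=(y_2)$; let $J\subset R$ be the ideal of $C$ and $\bar J_i$ the image of $J$ in $R/\mathfrak p_i=\sO^h_{Y^{(i)}_{\bar k},c}$. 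As $y_i\in\mathfrak m_R$ is not a unit modulo $J$, both $C\cap Y^{(1)}$ and $C\cap Y^{(2)}$ contain $c$; being smooth and purely $1$-dimensional, $(R/\mathfrak p_i)/\bar J_i\cong\bar k[z]^h$, so $\bar J_i$ is generated by $d-1$ elements that form part of a regular system of parameters of $R/\mathfrak p_i$. Put $\delta_1=y_2\bmod\mathfrak p_1$ and $\delta_2=y_1\bmod\mathfrak p_2$; then $\delta_i$ cuts out $Y^{(1)}\cap Y^{(2)}$ in $R/\mathfrak p_i$, and the hypothesis that $C\cap Y^{(1)}\cap Y^{(2)}$ is a reduced point amounts to $\delta_i$ being a uniformizer of $\bar k[z]^h=(R/\mathfrak p_i)/\bar J_i$; in particular $\delta_i$ is a nonzerodivisor modulo $\bar J_i$. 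Since moreover $C\subseteq Y^{(1)}\cup Y^{(2)}$ near $c$ and $\mathfrak p_1\cap\mathfrak p_2=0$, the minimal primes of $R/J$ are $\mathfrak q_i=(\mathfrak p_i+J)/J$, with $\sO^h_{C_{\bar k},c}/\mathfrak q_i=(R/\mathfrak p_i)/\bar J_i\cong\bar k[z_i]^h$, and $R/J$ is pure of dimension $1$.

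The crucial point is that the relation $y_1y_2=0$ in $R$ forces $J\cap\mathfrak p_1\subseteq\mathfrak m_R J$ and $(\mathfrak p_1+J)\cap(\mathfrak p_2+J)=J$. For the first: if $y_1a\in J$, then its image in $R/\mathfrak p_2$ is $\delta_2\cdot(a\bmod\mathfrak p_2)\in\bar J_2$, hence $a\bmod\mathfrak p_2\in\bar J_2$, so $a\equiv j'\pmod{\mathfrak p_2}$ with $j'\in J$, and then $y_1a=y_1j'+y_1y_2(\cdots)=y_1j'\in\mathfrak m_R J$; the second identity follows similarly. From $J\cap\mathfrak p_1\subseteq\mathfrak m_R J$ and the surjection $J\twoheadrightarrow\bar J_1$ one gets that $J$ is minimally generated by $d-1$ elements, which equals the height of $J$ because $R$ is Cohen--Macaulay of dimension $d$ and $R/J$ is pure of dimension $1$; so these generators form a regular sequence, whence $C$ is regularly immersed at $c$ and $R/J=\sO^h_{C_{\bar k},c}$ is Cohen--Macaulay, in particular has no embedded primes. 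Then $\mathfrak q_1\cap\mathfrak q_2$, being the nilradical of $R/J$, vanishes by the identity $(\mathfrak p_1+J)\cap(\mathfrak p_2+J)=J$, so $\sO^h_{C_{\bar k},c}$ is reduced; and $\sO^h_{C_{\bar k},c}/(\mathfrak q_1+\mathfrak q_2)=R/(\mathfrak p_1+\mathfrak p_2+J)=\bar k$ by the reduced-point hypothesis. The Mayer--Vietoris sequence $0\to\sO^h_{C_{\bar k},c}\to\bar k[z_1]^h\times\bar k[z_2]^h\to\bar k\to 0$ then identifies $\sO^h_{C_{\bar k},c}$ with the fibre product $\bar k[z_1]^h\times_{\bar k}\bar k[z_2]^h$, hence with $\bar k[z_1,z_2]^h/(z_1z_2)$.

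It remains to globalise: by the above the (geometric) henselized local rings of $C$ are isomorphic to $\bar k[z]^h$ or to $\bar k[z_1,z_2]^h/(z_1z_2)$, so $C$ is reduced, of pure dimension $1$, quasi-projective, and an nc variety; and since $C=\bigcup_i(C\cap Y_i)$ set-theoretically with $C$ reduced, each irreducible component of $C$ is an irreducible component of some smooth $C\cap Y_i$, hence smooth over $k$. Thus $C$ is a $1$-dimensional snc variety, and the regular immersion has been verified pointwise. I expect the heart of the matter to be the case $r=2$, and within it the absorption identity $J\cap\mathfrak p_1\subseteq\mathfrak m_R J$: this one observation simultaneously pins down the number of generators of $J$ --- hence, via Cohen--Macaulayness, the regular immersion --- and forces $\mathfrak q_1\cap\mathfrak q_2=0$, hence the reducedness and the normal-crossings shape of $\sO^h_{C_{\bar k},c}$. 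The underlying mechanism is that the generators of $J$ beyond those of $\bar J_1$ lie in $(y_1)$, whereas multiplication by $y_1$ annihilates $(y_2)$ in $R$, so the remaining ambiguity is measured by $\bar J_2$ modulo $\delta_2$ --- which vanishes precisely by the reduced-intersection hypothesis.
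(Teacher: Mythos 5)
Your proof is correct; it establishes the same local statements as the paper but by a noticeably different mechanism, so let me compare. The paper's proof, at a point of $Y_1\cap Y_2$, passes to $\widehat\sO=\bar k[[y_1,\dots,y_{d+1}]]/(y_1y_2)$, takes generators $z_3',\dots,z_{d+1}'$ of the ideal of $C\cap Y_1$ in the branch $Y_1$ (a regular sequence, since $C\cap Y_1$ and $Y_1$ are regular), lifts them into the ideal $\widehat\sI$ of $C$, and then reads off new coordinates in which $\widehat\sI=(z_3,\dots,z_{d+1})$, giving the regular immersion and the nodal presentation in one stroke. You never choose coordinates: you bound the minimal number of generators of $J$ by $d-1$ via the absorption identity $J\cap\mathfrak p_1\subseteq\mathfrak m_R J$, convert ``height-many generators'' into a regular sequence using Cohen--Macaulayness of the hypersurface $R$, and recover the shape $\bar k[z_1,z_2]^h/(z_1z_2)$ from reducedness and the fibre-product (Mayer--Vietoris) description rather than from a coordinate change. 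The two routes are close in substance: the paper's final assertion that the lifted sequence generates all of $\widehat\sI$ is exactly the point left implicit there, and it is justified by precisely your computation --- $y_1y_2=0$ together with the fact that $\delta_2$ is a nonzerodivisor modulo $\bar J_2$ (the reduced double-point hypothesis), followed by Nakayama. So your write-up can be viewed as a coordinate-free expansion of the paper's argument: it costs the extra input that height equals grade in a Cohen--Macaulay local ring, but it makes explicit where each hypothesis on the snc subcurve enters, and it also spells out the reducedness of $C$, the smoothness of its components, and the descent/globalization steps, which the paper's proof takes for granted.
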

\begin{proof}
Consider  a closed point $y$ of $Y_{\bar k}$ lying on $C$. For $y\in Y^\sm$ the curve $C$
is regular at $y$. For $y\in Y_1\cap Y_2$
we choose an isomorphism
\[\widehat \sO_{Y_{\bar k},y} \cong \bar k [[y_1,\ldots, y_{d+1}]]/(y_1 y_2)=: \widehat
\sO.   \]
Let $\widehat \sI$ be the ideal defining $C$ in $\widehat \sO$. As $C\cap Y_1$ and $Y_1$
are regular the ideal $\widehat \sI \cdot \widehat \sO/(y_2)$ is generated by a regular
sequence $z'_3, \ldots , z'_{d+1}$. Lift this sequence to $z_3, \ldots , z_{d+1}\in
\widehat \sI $. Then \[\widehat \sO\cong \bar k [[y_1,y_2,z_3,\ldots, z_{d+1}]]/(y_1
y_2)  \]
and $C$ is defined by the regular sequence $z_3,\ldots ,z_{d+1}$.
\end{proof}

\subsection{Cycle group of a simple normal crossings variety} \label{ss:C(Y)}

Throughout the article, we let $\Lambda$ be a commutative ring  with unity.  All
 motivic cohomology groups  have $\Lambda$-coefficients, while Chow groups have integral coefficients,
unless otherwise specified.

\smallskip

Let us assume from now on that $Y/k$ is a snc variety of dimension $d$.
The abelian group $\C(Y)$ we {define in this section} can be thought of as a cohomological variant of the Chow
group of zero-cycles of $Y$ with $\Lambda$-coefficients. Recall that, in contrast,  the usual Chow groups
form a Borel--Moore homology
theory. In fact, we will see {in Section~\ref{sec.motive}}
that for $p$ invertible in $\Lambda$ and $Y$ proper,  the group $\C(Y)$ is
isomorphic to motivic cohomology $H^{2d}(Y_{\rm cdh},\Lambda(d))$ in the sense of Suslin--Voevodsky
\cite{SV02}.

\smallskip

Let $Y^\sm$ be the smooth locus of $Y$ over $k$ and $Y^\sing$ its complement with the
reduced subscheme structure.
  Let $\Zcyc_0(Y^\sm)$ be the free $\Z$-module generated by the integral
zero-dimensional subschemes of $Y^\sm$.  The group $\C(Y)$ is defined by an exact
sequence 
$$0\to  {\sR } \to \Zcyc_0(Y^\sm)_\Lambda \to \C(Y) \to 0$$ 
where $\sR$ is the sub-$\Lambda$-module  of $\Zcyc_0(Y^\sm)_\Lambda$
generated by divisors ${\rm div}(g)$ associated with rational
functions $g$ on
curves $C$ of the following two types.

\medskip

\noindent
{\it Type 1 data:} A type 1 datum is a pair $(C_1,g_1)$. Here $C_1$ is an integral
one-dimensional closed subscheme $C_1\subset Y$ which is not contained in $Y^\sing$. Let
$\eta$ be the generic point of $C_1$ and let $C_1^\infty$ be $\tilde C_1\times_Y Y^\sing$
with the reduced subscheme structure, where $\tilde C_1$ is the normalization of $C_1$. 
The rational function $g_1$ is an element of
$\ker(\sO_{C_1,C_1^\infty\cup \{\eta\}}^\times \to \sO_{C_1^\infty}^\times)$, that is, it is a  rational function on $C_1$, which is defined and 
equal to $1$ over $C_1^\infty$.

\medskip

\noindent
{\it Type 2 data:} A type 2 datum is a pair $(C_2,g_2)$. Here $C_2\subset Y$ is a snc subcurve on $Y$. 
Let $C_2^\infty$  be the finite union of $C_2\cap Y^\sing$ and the set of maximal points of $C_2$.
The function $g_2$ is an element of $\sO_{C_2,C_2^\infty}^\times$.

\medskip

The group $\C(Y)$ is a variant of the cycle group studied in
\cite{LW85}. Note that if $Y/k$ is smooth, then $\C(Y)= \CH_0(Y)_\Lambda$.

\section{The restriction homomorphism} \label{sec:rest}

Let $A$ be an excellent henselian discrete valuation ring with perfect residue field $k$ of exponential characteristic $p$. Let $X$ be a
regular connected scheme which is projective and flat over $A$, such that the reduced
special fiber $Y$ of $X$ is a snc divisor. So $Y/k$ is a snc variety by Lemma~\ref{lem.divvar}. Let $\Lambda$ be a commutative ring with unity.

\smallskip

By $\Zcyc_1(X)$ we denote the free $\Z$-module generated by the integral closed
subschemes $Z\subset X$ of dimension one. We write $[Z]\subset \Zcyc_1(X)$ for the associated
generator. The  subgroup $\Zcyc_1^g(X)\subset \Zcyc_1(X)$ of `generic' elements is by definition
generated by those integral cycles $[Z]$ such that $Z$  is flat over $A$ and $Z\cap Y^\sing =
\varnothing$.

\smallskip

To a zero-dimensional closed subscheme $S\subset Y^\sm$,  one 
associates as usual  the element $$[S]=\sum_{z\in S}{\rm length} (\sO_{S,z}) [z]\in \C(Y).$$ Here length is the length of a ring as a module over itself.

\smallskip

One obtains the pre-restriction homomorphism 
\eq{tilderho}{\tilde \rho: \Zcyc_1^g(X) \to \Zcyc_0(Y^\sm),\quad  [Z] \mapsto
[Z\cap Y]. } 
In Section~\ref{sec:proof} we prove the following theorem.

\begin{thm}[Restriction]\label{thm.res} Assume that $\Lambda=\Z/m\Z$ with $m$ prime to $p$  and that
  additionally one of the
  following conditions holds:
\begin{itemize}
\item[(i)] $k$ is algebraically closed,
\item[(ii)] $k$ is finite,
\item[(iii)] $(d-1)!$ is prime to $m$, 
\item[(iv)] $A$ is equicharacteristic,
\item[(v)] $X/A$ is smooth.
\end{itemize}
 Then there 
 is a unique restriction homomorphism $\rho: \CH_1(X)_\Lambda \to \C(Y)$ such that the diagram
\[
\xymatrix{
\CH_1(X)_\Lambda \ar[r]^-{\rho} & \C(Y) \\
 \Zcyc_1^g(X)_\Lambda  \ar[u] \ar[r]_{\tilde \rho} & \Zcyc_0(Y^\sm)_\Lambda \ar[u]
}
\]
is commutative.
\end{thm}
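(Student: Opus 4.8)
The plan is to show that the subgroup $\sR \subset \Zcyc_0(Y^\sm)_\Lambda$ defining $\C(Y)$ (the span of divisors of type 1 and type 2 functions) is the image under $\tilde\rho$ of the rational-equivalence relations on $X$, up to a correction term that dies in $\C(Y)$. Uniqueness is immediate: by the moving lemma of Gabber--Liu--Lorenzini \cite{GLL13}, $\Zcyc_1^g(X)\surj \CH_1(X)$, so $\rho$ is determined by the diagram. For existence we must produce $\rho$, i.e.\ check that whenever $\sum_i n_i[Z_i]\in\Zcyc_1^g(X)$ becomes zero in $\CH_1(X)_\Lambda$, its image $\tilde\rho(\sum_i n_i[Z_i])$ lies in $\sR$. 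First I would reduce, again by \cite{GLL13}, to the case of a single relation $\Div_W(f)$ coming from a function $f$ on an integral surface $W\subset X$ which is in suitably general position (flat over $A$, meeting $Y^\sing$ and the indeterminacy locus of $f$ in codimension $\ge 1$, with $W\cap Y$ a snc-like subcurve); the content is then purely two-dimensional and local around $Y$.

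The core computation is to analyze $W\cap Y$ and the restriction of $f$ to it. Away from $Y^\sing$, the surface $W$ meets $Y$ transversally in a curve $D=W\cap Y$, and $f|_W$ restricts to a rational function $g$ on the components of $D$; the naive expectation $\tilde\rho(\Div_W f)=\Div_D(g)$ would land in the span of type 1 data, were it not for the behavior along $Y^\sing$ and along the multiple fibers described by Lemma~\ref{prop:sncv}, where $\prod y_i^{m_i}=\pi u$. The key step is to compare, using the local presentation \eqref{eq.locpre}, the order of vanishing of $f$ along a vertical curve through a point of $Y^\sing$ with the orders of vanishing of the restrictions of $f$ to the branches of $D$: this produces exactly a type 2 correction $(C_2,g_2)$ with $C_2=W\cap Y$ and $g_2$ a unit built from $f$ and $\pi$, so that $\tilde\rho(\Div_W f)-\Div_{C_2}(g_2)$ is a sum of type 1 divisors. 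This is where the hypothesis $\Lambda=\Z/m\Z$, $m$ prime to $p$, is used: the multiplicities $m_i$ are powers of $p$ times units, so the ambiguity in matching orders of vanishing is $p$-primary and vanishes modulo $m$.

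The main obstacle --- and the reason for the five alternative hypotheses (i)--(v) --- is showing that the correction term is \emph{definable over $k$}, i.e.\ that $g_2$ can be taken in $\sO_{C_2,C_2^\infty}^\times$ rather than only after base change to $\bar A$ or only after multiplying by some $(d-1)!$-torsion obstruction. Concretely, one reduces to a Milnor-$K$-theoretic statement: a certain symbol $\{f,\pi\}$ (or its image in $\rKM$ of the residue fields along $D$) must be shown to be a norm / to lift. In equal characteristic this is the Gersten conjecture for Milnor $K$-theory \cite{Ker09}; over a finite residue field it follows from the Kato conjecture \cite{KS12} via the étale realization; when $(d-1)!$ is prime to $m$ one instead bypasses cycles entirely and uses $K_0$ together with Grothendieck--Riemann--Roch \cite{BS98} to construct $\rho$ directly on $\CH_1(X)_\Lambda$; and when $X/A$ is smooth the whole difficulty evaporates since $Y^\sing=\varnothing$, $\sR$ is just the rational-equivalence relation on the smooth variety $Y$, and $\rho$ is the classical specialization map \cite[20.3.1]{Ful84}. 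In each case the verification is then a routine, if lengthy, bookkeeping of orders of vanishing against the local model \eqref{eq.locpre}.
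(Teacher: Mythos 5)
Your plan stands or falls on the reduction in the first paragraph: moving a rational equivalence into ``general position'', i.e.\ replacing an arbitrary relation by one of the form $\Div_W(f)$ with $W\subset X$ an integral surface flat over $A$, meeting $Y^\sing$ well and with $W\cap Y$ an snc-like subcurve. The moving lemma of Gabber--Liu--Lorenzini only moves \emph{cycles} (it shows $\Zcyc_1^g(X)\surj \CH_1(X)$, whence uniqueness of $\rho$ and surjectivity of the left vertical map); it says nothing about putting the \emph{relations} defining rational equivalence in good position relative to $Y^\sing$. A Chow-type moving lemma for relations is precisely what is unavailable in mixed characteristic, and its absence is the reason the existence of $\rho$ is not known for general $\Lambda$ --- the paper states this explicitly and stresses that the construction of $\rho$ is the main difficulty. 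So the ``core computation'' you describe never gets off the ground: you cannot assume the relation lives on such a $W$. A secondary but genuine error: the multiplicities $m_i$ in the local model $\prod y_i^{m_i}=\pi u$ of Lemma~\ref{prop:sncv} are arbitrary positive integers (multiplicities of components of the special fiber of a regular model), not $p$-powers times units, so the claim that the resulting ambiguity is $p$-primary and dies modulo $m$ is false; the hypothesis that $m$ is prime to $p$ enters elsewhere (étale realization, rigidity, and the comparison $\C(Y)\cong H^{2d,d}(Y)$), not to kill these multiplicities.

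The paper's actual proof is indirect and uses the side hypotheses (i)--(v) not as patches on a direct computation but as the only available routes to \emph{construct} $\rho$: first one identifies $\C(Y)$ with motivic cohomology $H^{2d,d}(Y)$ (Theorem~\ref{thm.motcomp}); then, for (i) and (ii), $\rho$ is obtained from the étale cycle class map on $X$, proper base change $H^{2d}(X_\et,\Lambda(d))\cong H^{2d}(Y_\et,\Lambda(d))$, and the isomorphism $\C(Y)\xrightarrow{\sim}H^{2d}(Y_\et,\Lambda(d))$ of Corollary~\ref{cor:isoclosed} (which for finite $k$ rests on the Kato conjecture); for (iv), from the Gersten conjecture for Milnor $K$-theory giving $\CH_1(X)_\Lambda\cong H^d(X_\Nis,\sK^M_{X,d})$ together with rigidity giving $\C(Y)\cong H^d(Y_\Nis,\sK^M_{Y,d})$, $\rho$ being induced by restriction of Nisnevich cohomology; for (iii), from $\CH_1(X)\to K_0(X)\to K_0(Y)\xrightarrow{c_d}H^{2d,d}(Y)$ and Grothendieck--Riemann--Roch, which shows this composite equals $(-1)^{d-1}(d-1)!\,\tilde\rho$ on $\Zcyc_1^g(X)$, whence one divides by $(d-1)!$; and for (v), from Fulton's specialization map. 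The compatibility with $\tilde\rho$ is then checked only on cycles in good position, where it is a standard cycle-class (resp.\ Chern-class) computation. Your sketch gestures at the right external inputs (Gersten, Kato, GRR, specialization) but assigns them the wrong role, and without a moving lemma for relations the direct verification that $\tilde\rho$ sends rational equivalences into $\sR$ cannot be carried out.
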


\begin{rmk}\label{rmk.reiii}
Rather than proving Theorem \ref{thm.res} under the assumption (iii),
we will see, more generally, that the restriction homomorphism exists
for any ring $\Lambda$ in which $p (d-1)!$ is invertible.
\end{rmk}

By \cite[Thm.\ 2.3]{GLL13}, the
left vertical map in the diagram is surjective for any ring $\Lambda$,  so uniqueness of $\rho$
is clear.
A direct approach to the construction of $\rho$ would be to prove a moving lemma. Unfortunately, such Chow
type moving lemmas are not available in mixed characteristic. We explain in Section~\ref{sec:proof} how to use the extra hypotheses to yield an indirect proof. 

A general theory of derived mixed motives over base schemes, which satisfies standard properties, would show that the
restriction map exists for any $\Lambda$ in complete generality. Unfortunately, such a
theory is not available at the moment.

\section{Canonical lifting of zero-cycles} \label{sec:lift}

We use the notation of Section~\ref{sec:rest}, in particular recall we considered the solid
arrows in the diagram 
\eq{lift.dia}{ \begin{aligned} \xymatrix{
    \CH_1(X)_\Lambda  &  \\
    \Zcyc_1^g(X) \ar[u] \ar[r]_{\tilde \rho} & \Zcyc_0(Y^\sm)\rlap{\text{.}} \ar@{-->}[lu]_{\tilde
      \gamma} } \end{aligned} } 
It is well known that the map $\tilde \rho$ is surjective, i.e.\
one can lift zero-cycles on $Y^\sm$ to flat one-cycles on $X/A$. 
As the idea for this construction is central for the arguments in this section and the
next one, we  recall it. Given a closed point $y\in Y^\sm$,  let $(a_1,\ldots ,a_d)\in
\sO_{Y^\sm,y}$ be a regular sequence generating the maximal ideal. Lift these parameters
to elements $\hat a_1 , \ldots , \hat a_d\in \sO_{X,y}$. Let $Z^{\rm loc}\subset \Spec
\sO_{X,y}$ be the associated closed subscheme of the ideal \[\hat a_1  \sO_{X,y} +  \cdots
+ \hat a_d  \sO_{X,y}\subset \sO_{X,y}\]
and let $Z$ be the unique irreducible component of the closure of $Z^{\rm loc}$ in $X$
which contains $y$. Then $Z$ is finite, flat over $A$ and $Z\cap Y$ is the integral
scheme associated with the point $y\in Y$, so $Z$ defines a lifting of $y$ to  $ \Zcyc_1^g(X)$.

\smallskip

Of course this lifting is
not unique if  $d=\dim(Y)>0$. However under certain conditions the lift is
unique in the Chow group, i.e.\ there exists a unique dashed map $\tilde \gamma$ making
the diagram \eqref{lift.dia} commutative. The goal of this section is to explain this fact.
We use the notation of Theorem~\ref{thm.res}.

\begin{prop}\label{key.prop}
 Let $X$ be a regular scheme, flat and projective over $A$,
whose reduced special fiber $Y$ is a snc divisor on $X$.
Let
  $\Lambda=\Z/m\Z$ with $m$ prime to~$p$.
The group
$\ker\big(\tilde \rho: \Zcyc_1^g(X)_\Lambda \to \Zcyc_0(Y^\sm)_\Lambda \big)$
is contained in $\ker \big( \Zcyc_1^g(X)_\Lambda \to
\CH_1(X)_\Lambda \big)$.  In particular, there exists a unique
homomorphism $\tilde \gamma:\Zcyc_0(Y^\sm) \to \CH_1(X)_\Lambda$ making the diagram \eqref{lift.dia} commutative.
\end{prop}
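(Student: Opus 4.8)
The plan is to reduce the proposition to an integral assertion about rational equivalence, and then to prove it by descending to surfaces, where the obstruction to the required rational equivalence is a coherent cohomology class on a proper curve over $k$ and hence is killed by any integer prime to $p$. First the reductions. Since $\tilde\rho\colon\Zcyc_1^g(X)\to\Zcyc_0(Y^\sm)$ is surjective (recalled above), so is $\tilde\rho_\Lambda$, and both the existence and the uniqueness of $\tilde\gamma$ in \eqref{lift.dia} follow formally from the inclusion $\ker\tilde\rho_\Lambda\subseteq\ker\bigl(\Zcyc_1^g(X)_\Lambda\to\CH_1(X)_\Lambda\bigr)$. Using surjectivity once more (given $Z$ with $\tilde\rho(Z)=m\,\tilde\rho(Z')$, the cycle $Z-mZ'$ lies in the integral kernel of $\tilde\rho$ while $m[Z']=0$ in $\CH_1(X)_\Lambda$), that inclusion reduces to the integral statement: for effective $Z_1,Z_2\in\Zcyc_1^g(X)$ with $[Z_1\cap Y]=[Z_2\cap Y]$ in $\Zcyc_0(Y^\sm)$, one has $[Z_1]\equiv[Z_2]\pmod m$ in $\CH_1(X)$. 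A useful preliminary remark: since $\mathrm{Supp}(Z_i\cap Y)\subset Y^\sm$, which is the regular locus of $Y$ ($k$ being perfect), equality of cycles forces $(Z_1\cap Y)-(Z_2\cap Y)$ to vanish as a \emph{Cartier} divisor on $Y$, not merely as a cycle.

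The model case is $d=1$. Then $X$ is a regular surface, $\CH_1(X)=\Pic(X)$, and the remark above shows $[Z_1]-[Z_2]\in\ker\bigl(\Pic(X)\to\Pic(Y)\bigr)$, with $Y$ proper over $k$. Comparing $X$ with its formal completion $\widehat X$ along $Y$ and running the standard dévissage along the infinitesimal thickenings $Y_n$ of $Y$, the kernel of $\Pic(\widehat X)\to\Pic(Y)$ is an iterated extension of subquotients of the groups $H^1\bigl(Y,\sI_Y^{\,n}/\sI_Y^{\,n+1}\bigr)$, which are finite-dimensional $k$-vector spaces; since $m$ is prime to $p$, it is invertible in $k$, so multiplication by $m$ is bijective on each of these groups and hence on the whole kernel. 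Thus $[Z_1]-[Z_2]$ is $m$-divisible in $\Pic(\widehat X)$, and because $X$ is projective over the excellent henselian ring $A$ one has $\Pic(X)\hookrightarrow\Pic(\widehat X)$, so $m$-divisibility descends to $\Pic(X)$. This proves the integral statement for $d=1$.

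For general $d$ the plan is to reduce to the surface case. After fixing a projective embedding of $X$ over $A$, choose a surface $S\subset X$ containing $Z_1\cup Z_2$ which is a sufficiently general complete intersection of hyperplane sections, arranged so that $S$ is regular (or at worst normal) along $Z_1\cup Z_2$ and along $S\cap Y^\sm$, the $Z_i$ are Cartier divisors on $S$ near their traces on $Y$, and $S\cap Y$ is a proper curve over $k$ whose intersections with $Y^\sing$ and with the bad locus of $S$ avoid $Z_1\cup Z_2$; then $(Z_1\cap Y)-(Z_2\cap Y)$ still vanishes as a Cartier divisor on $S\cap Y$. Applying the $d=1$ argument to $S$ gives $[Z_1]\equiv[Z_2]\pmod m$ in $\Pic(S)$, and pushing forward along $S\hookrightarrow X$ yields the integral statement. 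An alternative, which is a more direct substitute for the unavailable moving lemma, is to deform $Z_1$ into $Z_2$ over $X$ inductively along the $Y_n$: the obstruction to promoting the $n$-th stage to the $(n{+}1)$-st lies in a coherent cohomology group of $Y$ over $k$, on which $m$ acts bijectively, so the construction goes through over $\widehat X$ and algebraizes as before.

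The main obstacle is exactly that there is no Chow-type moving lemma in mixed characteristic, so the rational equivalences witnessing the integral statement must be manufactured by hand; this is also the reason the construction of $\rho$ in the opposite direction requires the extra hypotheses of Theorem~\ref{thm.res}. The delicate technical points are, on one side, controlling the geometry of the auxiliary surface $S$ (or of the formal deformation) near $Y^\sing$, near the singular locus of $S$, and at the components of the special fibre of multiplicity $>1$; and on the other side, identifying the successive obstructions with honest coherent cohomology of proper $k$-schemes, so that the elementary fact that such groups become trivial after inverting any integer prime to $p$ can be applied and then transported from $\widehat X$ back to $X$ by algebraization. Everything else in the argument is formal.
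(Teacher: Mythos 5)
Your reduction to the integral statement (effective $Z_1,Z_2\in\Zcyc_1^g(X)$ with $[Z_1\cap Y]=[Z_2\cap Y]$ should satisfy $[Z_1]\equiv[Z_2]$ mod $m$ in $\CH_1(X)$) is fine, and your $d=1$ argument is genuinely different from the paper's, which uses the \'etale cycle class $\Pic(X)_\Lambda\hookrightarrow H^2(X_\et,\Lambda(1))$ (Kummer theory) together with proper base change $H^2(X_\et,\Lambda(1))\cong H^2(Y_\et,\Lambda(1))$. But as written your $d=1$ step has a gap at the end: from unique $m$-divisibility of $\ker\big(\Pic(\widehat X)\to\Pic(Y)\big)$ you conclude that $[Z_1]-[Z_2]\in m\Pic(X)$ ``because $\Pic(X)\hookrightarrow\Pic(\widehat X)$''. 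Injectivity does not transport divisibility backwards along an inclusion; what you need is injectivity of $\Pic(X)/m\to\Pic(\widehat X)/m$, i.e.\ you must descend an $m$-th root of $\sO_X(Z_1-Z_2)$ from $\widehat A$ to $A$ (after algebraizing over $\widehat A$ by formal GAGA). This can be repaired by Artin--Popescu approximation, using that $A\to\widehat A$ is regular since $A$ is excellent henselian, but that is an extra argument, not a formal consequence of the injection; the paper's \'etale-cohomological route avoids the issue altogether.

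The more serious gap is the passage from general $d$ to surfaces. You propose a single ``sufficiently general'' complete intersection surface $S\subset X$ containing $Z_1\cup Z_2$, regular (or at worst normal) along the relevant loci. The $Z_i$ are arbitrary integral curves flat over $A$ avoiding $Y^\sing$, so they may have embedding dimension $>2$ at closed points; then no complete intersection surface through them is regular along them, and if $S$ is only normal your $d=1$ argument no longer applies ($Z_1-Z_2$ need not be Cartier on $S$, $\CH_1(S)$ is not $\Pic(S)$, and the formal Picard d\'evissage does not see the cycle class). This difficulty is precisely what the paper's proof is organized around: it first reduces to differences $[Z]-n[Z']$ where $Z'$ is a chosen lift with $Z'\cap Y$ reduced (hence $Z'$ regular); for \emph{regular} $Z$ it runs an induction on $d$ using the Bertini theorems over a discrete valuation ring of \cite{JS12} and \cite[Sec.\ 4]{SS10} to cut a regular hypersurface $H\subset X$ through $Z$ with snc reduced special fiber (possible because regularity of $Z$ forces the embedding dimension of $Z\cap Y$ to be at most one) and a one-dimensional complete intersection $H'$ through $Z'$, comparing both to a common $Z''\subset H\cap H'$; and for \emph{singular} $Z$ it uses Bloch's trick from \cite[App.]{EW13}: embed the normalization $\bar Z$ in $\bar X=\P^M_X$, prove the vanishing there, and push forward. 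Your proposal has no substitute for this last reduction, does not address genericity over a finite $k$ (the paper passes to a pro-$\ell$ extension), and does not establish the existence of $S$ with the listed properties; the alternative sketch of deforming $Z_1$ into $Z_2$ along the thickenings $Y_n$ is only a heuristic (obstructions to deforming cycles are not the coherent cohomology of $Y$, and rational equivalence mod $m$ is not produced this way). So the higher-dimensional step is missing the key ideas.
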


\begin{proof}
We first assume $d=1$.
There is a commutative diagram involving \'etale cycle maps
\[
\xymatrix{
\Zcyc^g_1(X)_\Lambda \ar[r] \ar[d]_{\tilde \rho} & \CH_1(X)_\Lambda  \ar[r]^-{c_X} & H^2(X_{\et},\Lambda(1)) \ar[d]^\wr \\
\Zcyc_0(Y^\sm)_\Lambda  \ar[rr] &  & H^2(Y_{\et},\Lambda(1))\rlap{\text{.}}
}
\]
The right vertical isomorphism is due to proper base change for \'etale cohomology \cite[IV,~Thm.~1.2]{SGA4.5}. The cycle map $c_X$ is
injective as it is equal to the composite map
\ga{}{\CH_1(X)_\Lambda\cong \Pic(X)_\Lambda
 \hookrightarrow  H^2(X_{\et}, \Lambda(1))
\notag}
which is injective 
by Kummer theory. 
So the proposition follows in this case.

\smallskip

For $d$ general fix a projective
embedding $X\subset \mathbb P^N_A$.   If $k$ is finite, the assertion to be proved is
invariant under extension of $k$ to its maximal pro\nobreakdash-$\ell$-extension, for some prime
$\ell$ prime to $m$. For this, one applies the flat pull-back
and proper push-forward  \cite[p.~394]{Ful84}.  So we may assume $k$ to be infinite. 

\smallskip

Consider an element
\[
\sum_{i=1}^s r_i [Z_i] \in \ker (\tilde \rho)
\]
with $Z_i$ an integral scheme, flat over $A$,  disjoint from $Y^\sing$.
Then $(Z_i\cap Y)_{\rm red}$  consists of a closed point  $z_i$
 of $Y^\sm$. 
 Choose a lift $Z'_i$ of $z_i$,
i.e.\ an integral one-cycle $Z'_i\subset X$ which is finite, flat over $A$, and such that
$Z'_i\cap Y=z_i$ in the sense of schemes. When choosing the lifts $Z'_i$ we can make sure that
$Z'_i=Z'_j$
whenever
$z_i=z_j$.
This condition implies that $\sum_{i=1}^s r_in_i [Z'_i] =0$ in $\Zcyc_1^g(X)_\Lambda$, where
 $n_i$ is the intersection multiplicity of $Z_i$ and $Y$. 
Hence, in order to prove the proposition,
it is enough to show that  $$\sigma( [Z_i]-n_i [Z'_i])=0$$
for all $1\le i\le s$. Here $\sigma: \Zcyc_1^g(X)_\Lambda
\to \CH_1(X)_\Lambda $ is the canonical map.
So we need only consider an element of the form $[Z]-n[Z']\in \ker(\tilde \rho)$, i.e.\ $Z', \ Z$ are {one-dimensional integral,}
finite, flat {schemes}
over $A$, with $Z'\cap Y$ reduced (in particular $Z'$ is regular) and $n$ is the intersection multiplicity of $Z$ and $Y$.

\smallskip

We first assume that $Z$ is regular. 
We prove this special case by induction on $d>1$, using a Bertini type argument.
Later we explain how to reduce to this special case.
As by assumption, $H^0(Z,\sO_Z)$ is a discrete valuation ring, the embedding dimension of
$Z\cap Y$ is {at most} one. So by Bertini's theorem  \cite{AK79} there is an ample hypersurface section $H_Y$ of
$Y$ such that $Z\cap Y $ is contained in $H_Y$ and such that $H_Y$ is a simple normal
crossings subvariety of~$Y$. If $H_Y$ is chosen ample enough we can lift it to a
hypersurface section $H$ of $X$ flat over $A$ and containing $Z$, see \cite[Thm.\ 1]{JS12}
and \cite[Sec.\ 4]{SS10}. Then $H$ is 
regular and its reduced special fiber is a snc divisor.

\smallskip

Next we use Bertini's theorem  ({\it loc.\ cit.}) to find a snc subvariety $H'_Y$ of $Y$ which satisfies
\begin{itemize}
 \item $H'_Y$ is of dimension one,
\item  $Y\cap Z'$ is contained in $H'_Y$,
\item $H'_Y$ is the intersection of $d-1$ ample hypersurface sections,
\item $H'_Y\cap H_Y$ consists of reduced points in $Y^\sm$. 
\end{itemize} 

{ If the hypersurface sections are chosen ample enough, we can lift $H'_Y$ to a closed 
subscheme $H'\subset X$, regular, flat over $A$, such that $(H'\otimes k)_{\rm red}$ is a
snc divisor on $H'$ and  such that $H'$ contains $Z'$.}
The scheme $H\cap H'$ is finite, flat
over $A$ and its intersection with $Y$ is reduced. In particular $H\cap H'$ is regular. Let $Z''$ be its component containing $Z'\cap Y$.

\smallskip 

By the induction assumption, $[Z]-n[Z'']$ vanishes in $\CH_1(H)_\Lambda$,  thus it also
vanishes in $\CH_1(X)_\Lambda$, so $\sigma([Z]-n[Z''])=0$. By the $d=1$ case of the
proposition, $[Z']-[Z'']$ vanishes in $\CH_1(H')_\Lambda$, thus it also vanishes  in $\CH_1(X)_\Lambda$, 
so $\sigma ([Z']-[Z''])=0$. Finally, we obtain
\[
\sigma ([Z]-n [Z'])=  \sigma([Z]-n[Z'']) +n \sigma ([Z'']-[Z'])= 0.
\]

To finish the proof we apply Bloch's idea in \cite[App.]{EW13} to  treat the general case.
Let $\bar Z$ be
the normalization of $Z$. As $A$ is excellent, $\bar Z$ is finite over $Z$,  we can find
a projective embedding $\bar Z\to \mathbb P^M_X$. Let $\bar X$ be $\mathbb P^M_X$ and
$\bar Y$ be~$\mathbb P^M_Y$.  Choose a lift of the closed point $(\bar Z\cap \bar Y)_{\rm
  red}$ to a closed subscheme  $\bar Z' \subset \bar X$  which
is \'etale over {$Z'$. } Let $\bar n$ be the intersection multiplicity of $\bar Z$ and $\bar
Y$. Then the push-forward of $[\bar Z] - \bar n [\bar Z']$ along $\bar X \to X$ is equal to
our old cycle $[Z]-n [Z']$. 

The cycle $[\bar Z] - \bar n [\bar Z']$ is in the kernel of the
restriction map
\[
\bar{\tilde \rho}: \Zcyc^g_1(\bar X)_\Lambda \to \Zcyc_0(\bar Y^\sm)_\Lambda,
\] 
so it vanishes in $\CH_1(\bar X)_\Lambda$ by what is shown above in the case of regular cycles. Finally, this
implies that $\sigma ([Z]-n [Z'])$ vanishes too.
\end{proof}

\section{The inverse restriction map $\gamma$}   \label{sec:inverse}

Recall that in Section~\ref{sec:lift} we 
constructed a canonical homomorphism
\[
\tilde \gamma: \Zcyc_0(Y^\sm)_\Lambda\to \CH_1(X)_\Lambda.
\]

\begin{thm}
{Under the assumptions of Proposition~\ref{key.prop}}, 
there is a unique homomorphism $\gamma: \C(Y) \to \CH_1(X)_\Lambda$ making the diagram
\[
\xymatrix{
 \C(Y) \ar[r]^-{\gamma} & \CH_1(X)_\Lambda \\
 \Zcyc_0(Y^\sm)_\Lambda \ar[u] \ar[ur]^{\tilde \gamma}  &   \Zcyc_1^g(X)_\Lambda \ar[l]^-{\tilde \rho}  \ar[u]_{\sigma}
}
\]
commutative. In other words, the map $\tilde \gamma$ factors through $\C(Y)$.
\end{thm}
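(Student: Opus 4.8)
The plan is to show that $\tilde\gamma$ kills the subgroup $\sR \subset \Zcyc_0(Y^\sm)_\Lambda$ generated by the divisors $\Div(g)$ coming from type 1 and type 2 data; since $\C(Y) = \Zcyc_0(Y^\sm)_\Lambda / \sR$, this immediately produces the desired factorization $\gamma$, and uniqueness is automatic because $\Zcyc_0(Y^\sm)_\Lambda \surj \C(Y)$. So the whole content is the vanishing statement $\tilde\gamma(\sR) = 0$, and it suffices to treat one generator $\Div(g)$ at a time, for a curve $C$ of type 1 or type 2 with its admissible rational function $g$.

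For a type 1 datum $(C_1, g_1)$: the curve $C_1 \subset Y$ is integral, one-dimensional, not contained in $Y^\sing$, with normalization $\tilde C_1$, and $g_1$ is a rational function which is regular and equal to $1$ on $C_1^\infty = \tilde C_1 \times_Y Y^\sing$. The idea is to lift $C_1$ to a relative curve $\sC_1 \subset X$ flat over $A$ whose special fiber is $C_1$ (using the local lifting construction from Section~\ref{sec:lift} along $C_1$, or a Bertini/complete-intersection lift as in the proof of Proposition~\ref{key.prop}), so that $\sC_1$ is a regular arithmetic surface fibered over $A$ with generic fiber a curve over $K$; after normalizing we may assume $\sC_1$ regular. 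The function $g_1$, being equal to $1$ on the preimage of $Y^\sing$, should lift — or be incorporated — to a rational function $\hat g$ on $\sC_1$ whose restriction to the special fiber is $g_1$ (the condition ``$g_1 = 1$ near $C_1^\infty$'' is exactly what is needed so that the zeros and poles of a lift stay in the locus disjoint from $Y^\sing$, i.e.\ in $\Zcyc_1^g$). Then $\Div_{\sC_1}(\hat g)$ is a one-cycle on $X$ that is rationally equivalent to zero in $\CH_1(X)$, hence $\sigma(\Div_{\sC_1}(\hat g)) = 0$; and $\tilde\rho$ of this cycle is (up to the vertical cycle components supported away from $Y^\sing$, which one arranges to be trivial or to cancel) precisely $\Div_{C_1}(g_1) = \Div_{\tilde C_1}(g_1)$. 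Combining, $\tilde\gamma(\Div(g_1)) = \sigma(\text{lift of }\Div(g_1)) = \sigma(\Div_{\sC_1}(\hat g)) = 0$ in $\CH_1(X)_\Lambda$.

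For a type 2 datum $(C_2, g_2)$: here $C_2 \subset Y$ is an snc subcurve and $g_2 \in \sO_{C_2, C_2^\infty}^\times$, i.e.\ $g_2$ is a unit near the singular locus of $Y$ and near the nodes of $C_2$. By Lemma~\ref{prop:sncc}, $C_2$ is itself a one-dimensional snc variety, regularly immersed in $Y$. One again lifts $C_2$ to a relative curve $\sC_2 \subset X$, flat over $A$, regular, whose reduced special fiber is $C_2$, realized as a complete intersection of hypersurface sections as in the proof of Proposition~\ref{key.prop}; then $\sC_2$ is an arithmetic surface with special fiber $C_2$. Since $g_2$ is a unit at the points of $C_2$ lying over $Y^\sing$ and at the crossing points, one can spread it out to a rational function $\hat g_2$ on $\sC_2$ whose divisor is horizontal (flat over $A$) and supported away from $Y^\sing$, with $\Div_{\sC_2}(\hat g_2)|_Y$ mapping under the purity/length convention to $\Div_{C_2}(g_2) \in \Zcyc_0(Y^\sm)_\Lambda$. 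As before $\Div_{\sC_2}(\hat g_2)$ is rationally trivial on $X$, so its class $\sigma$ vanishes, and since it is a $\tilde\rho$-lift of $\Div(g_2)$ we get $\tilde\gamma(\Div(g_2)) = 0$.

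The main obstacle will be the lifting step: producing, for an arbitrary admissible curve $C$ in $Y$ and admissible $g$ on $C$, a regular relative curve $\sC \subset X$ flat over $A$ with special fiber $C$ together with a rational function $\hat g$ lifting $g$ and having divisor supported in $\Zcyc_1^g(X)$ — i.e.\ disjoint from $Y^\sing$ and flat over $A$. For type 1 this requires handling the normalization $\tilde C_1$ and checking that the ``$g_1 \equiv 1$ near $C_1^\infty$'' hypothesis really forces the divisor of any reasonable lift to avoid $Y^\sing$; for type 2 the issue is that $C_2$ may be reducible and nodal, so one must control the lift component by component and ensure the lift is regular and that $\hat g_2$ is genuinely a unit along the exceptional locus. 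I expect this to be carried out by the same Bertini/hypersurface-section technology (\cite{AK79}, \cite{JS12}, \cite[Sec.~4]{SS10}) already deployed in Proposition~\ref{key.prop}, choosing the hypersurface sections ample enough to contain the relevant curves and to keep everything regular and in good position, combined with the local complete-intersection lifting of Section~\ref{sec:lift} and, if reducibility causes trouble, Bloch's normalization-and-projective-bundle trick from \cite[App.]{EW13} to reduce to the irreducible regular case; the bookkeeping to match $\tilde\rho(\Div_{\sC}(\hat g))$ with $\Div_C(g)$ up to classes that are already zero will be the fiddly part.
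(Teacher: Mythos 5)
Your overall mechanism is the right one and is the paper's: uniqueness follows from the surjectivity of $\Zcyc_0(Y^\sm)_\Lambda\to\C(Y)$, and for each type $i$ datum one produces a cycle in $\Zcyc_1^g(X)_\Lambda$ which is the divisor of a rational function on a curve lifted to $X$ (hence vanishes in $\CH_1(X)_\Lambda$) and which maps under $\tilde\rho$ to $\Div(g_i)$, so that $\tilde\gamma(\Div(g_i))=\sigma(\text{lift})=0$. For type 2 data your outline essentially matches the paper, with one inaccuracy you should be aware of: one cannot in general arrange the special fiber of the lift to \emph{be} $C_2$. The paper lifts a basis of the conormal module of $C_2$ (regularly immersed by Lemma~\ref{prop:sncc}), choosing the lifts $\hat a_i$ inductively to be units at suitable points of $Y^\sing$ so that $\hat C_2\cap Y^\sing$ is finite; the special fiber of $\hat C_2$ then agrees with $C_2$ only near $\hat C_2^\infty$ and has residual components, and the function to be lifted is $(g_2,1)$ in the product decomposition $\sO_{\hat C_2\cap Y,\hat C_2^\infty}=\sO_{C_2,C_2^\infty}\times R$, i.e.\ it is extended by $1$ on the residual part; this is what guarantees $\tilde\rho(\Div(\hat g_2))=\Div(g_2)$ with no spurious contributions.

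The genuine gap is the type 1 case. You assume that $C_1$ can be lifted directly to a flat relative curve $\sC_1\subset X$ with special fiber $C_1$ (``after normalizing we may assume $\sC_1$ regular''). But a type 1 curve is only assumed integral and not contained in $Y^\sing$: at the points of $C_1\cap Y^\sing$ it may be singular, may pass through strata where many components of $Y$ meet, and need not be regularly immersed in $Y$ or in $X$ there. Hence neither the local lifting of Section~\ref{sec:lift} (which works only at closed points of $Y^\sm$) nor the conormal-basis/complete-intersection lifting applies, and a flat lift with prescribed special fiber $C_1$ need not exist; normalizing a hypothetical lift does not help since producing the lift is the problem, and Bloch's trick from Proposition~\ref{key.prop} concerns cycles \emph{disjoint} from $Y^\sing$, which is exactly what fails here. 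The paper's proof contains an essential step you are missing: one first performs a composition $\pi:\check X\to X$ of blow-ups at closed points lying over $Y^\sing$ (following Jannsen's appendix to \cite{SS10}) so that the strict transform $\check C_1$ is smooth near $\pi^{-1}(Y^\sing)$, meets $\check Y^\sing$ in reduced points and lies on at most two components of $\check Y$; only then does the type 2 style lifting apply (inside the component $\check Y_1$ containing $\check C_1$, with the function extended by $1$ on the other components of $\hat C_1\cap\check Y$, using that $g_1\equiv 1$ on $C_1^\infty$), and the resulting vanishing of $\tilde{\check\gamma}(\pi_Y^*\Div(g_1))$ in $\CH_1(\check X)_\Lambda$ is transported back to $X$ via the compatibility $\pi_*\circ\tilde{\check\gamma}\circ\pi_Y^*=\tilde\gamma$. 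Without this blow-up reduction and push-forward/pull-back bookkeeping, your type 1 argument does not go through.
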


\begin{proof}
Uniqueness in the theorem is clear,  as by construction, the left vertical arrow is
surjective. In order to prove that the factorization $\gamma$ of $\tilde \gamma$ exists, 
we have to show
that for a type $i$ datum $(C_i,g_i)$ ($i\in \{1,2\}$) on $Y$ we have
$$0= \tilde
\gamma({\rm div}(g_i))  \in \CH_1(X)_\Lambda.
$$

\smallskip
\noindent
{\it Type 2 datum:} \\
Let $(C_2,g_2)$ be a type 2 datum.
The idea is first to lift the curve $C_2$ to a `nice' flat curve $\hat C_2\subset X$ over $A$.
In a second step we lift
$g_2$  to $\hat g_2\in k(\hat C_2)^\times$ such that ${\rm div}(\hat g_2)\in \Zcyc_1^g(X)$ has
restriction ${\rm div}(g_2)\in \Zcyc_0(Y^\sm)$. Then the class of  ${\rm div}(\hat g_2)$ in
$\CH_1(X)_\Lambda$ is $\tilde \gamma({\rm div}(g_2))$ and we are done.

We start with the construction of the lifted curve $\hat C_2$. The argument is similar to
\cite[Lem.\ 2.5]{GLL13}. Let $\mathcal I$ be the ideal sheaf defining $C_2$ in $Y$. By
Lemma~\ref{prop:sncc}, the $\sO_{Y,C_2^\infty}/\sI_{C_2^\infty} $-module
$ \sI_{C_2^\infty} /\sI_{C_2^\infty}^2 $ has a basis $a_1,\ldots , a_{d-1}$. Lift $a_1$ to
an element $\hat a_1\in \sO_{X,C_2^\infty}$ which is a unit at the maximal points of
$Y^\sing$. Let $L(\hat a_1)\subset X$ be the Zariski closure of the closed subscheme of
$\Spec \sO_{X,C_2^\infty} $ given by $\hat a_1=0$. Let $P(\hat a_1)$ be the set of maximal points of
$L(\hat a_1)\cap Y^\sing$. Clearly, the points $P(\hat a_1)$ have height one in $Y^\sing$.
Lift $a_2$ to an element $\hat a_2\in \sO_{X,C_2^\infty}$ which is a unit at the points
$P(\hat a_1)$. Let $L(\hat a_1,\hat a_2)$ be the Zariski closure of the closed subscheme of
$\Spec \sO_{X,C_2^\infty} $ given by $\hat a_1=\hat a_2=0$. Let $P(\hat a_1,\hat a_2)$ be the maximal points
of $L(\hat a_1,\hat a_2)\cap Y^\sing$. Clearly, the points $P(\hat a_1,\hat a_2)$ have height two in
$Y^\sing$. We proceed like this.

In the end we get elements $\hat a_1,\ldots ,\hat a_{d-1}\in \sO_{X,C_2^\infty} $.
Let $\hat C_2^{\rm loc}$ be the closed subscheme of $\Spec \sO_{X,C_2^\infty}$ defined
by the ideal $$ \hat a_1 \sO_{X,C_2^\infty}  + \cdots + \hat a_{d-1} \sO_{X,C_2^\infty}
\subset  \sO_{X,C_2^\infty}$$ and let $\hat C_2$ be the closure of $\hat C_2^{\rm loc}$ in
$X$. 
By Nakayama's lemma the closed subschemes~$\hat C_2 \cap Y$ and~$C_2$ of~$Y$ coincide
in a neighborhood of $C_2^\infty$.
The scheme $\hat C_2$ is regular around the points $C_2^\infty$, because at each
$y\in C_2^\infty$ the intersection of $\hat C_2$ with an irreducible component $Y_1$ of $Y$ is
regular and proper. This implies that $\hat C_2$ is flat over
$A$.  Furthermore, $\hat C_2 \cap Y^\sing $ is finite, because it has dimension zero or is
empty.

Let $\hat C_2^\infty$ be the finite set of points of $\hat C_2$ consisting of the maximal points
 of $\hat C_2\cap Y$ and the   closed  points $\hat C_2\cap Y^\sing$.
In a neighborhood of $\hat C_2^\infty$, the scheme $\hat C_2\cap Y$ is the disjoint union of $C_2$ and a residual
part. Thus, there is a direct product
decomposition of rings
\[
\sO_{\hat C_2\cap Y, \hat C_2^\infty}= \sO_{C_2,C_2^\infty} \times R
\]
where $R$ is a semi-local ring of dimension one. It gives rise to an element $(g_2,1)\in
\sO_{\hat C_2\cap Y, \hat C_2^\infty}^\times$ which we lift to an element $\hat g_2 \in
\sO_{\hat C_2, \hat C_2^\infty}^\times$. 

 As  $\hat C_2^\infty$ contains the points of $\hat C_2\cap Y^\sing$, 
${\rm div}(\hat g_2)$ is in $\Zcyc^g_1(X)$. As restriction of rational functions commutes with
taking associated divisors, we have $\tilde \rho({\rm div}(\hat g_2))={\rm
  div}(g_2)$.  So one has the commutative diagram
 \ga{}{\xymatrix{ & \CH_1(X)_\Lambda & & 0 \\
 \ar[ur]^{\tilde{\gamma}}  \Zcyc_0(Y^{\rm sm}) & \ar[l]^{\tilde{\rho}} \Zcyc_1^g(X) \ar[u] &
 \ar@{|->}[ur]^{\tilde{\gamma}}  {\rm div}(g_2) & \ar@{|->}[l]^{\tilde{\rho}} {\rm div}(\hat{g}_2)\rlap{\text{.}} \ar@{|->}[u]
 } 
\notag }
  This finishes the proof in this case.
  
\smallskip
\noindent
{\it Type 1 datum:} \\
Let $(C_1,g_1)$ be a type 1 datum.
We consider a morphism $\pi:\check X\to X$ which is a repeated blow-up of closed points of
$X$, all closed points lying over $Y^\sing$. We write $\check Y$ for the reduced special
fiber of $\check X$, etc. Note that $\check Y\subset \check X$ is a snc divisor.  By a
careful choice of the closed points,
i.e.\ the centers of the
blow-ups, we can assume that the strict transform $\check C_1$ of $C_1$ in $\check X$
satisfies the following properties (see Jannsen's appendix to \cite{SS10}):

\begin{itemize}
\item[(i)]
$\check C_1$ is smooth around $\pi^{-1}(Y^\sing)$,
\item[(ii)] $\check C_1\cap \check Y^\sing$ consists of reduced points,
\item[(iii)] all points $y\in   \check C_1$   lie on at most two components of   $\check Y$.
\end{itemize}

Let $\tilde{\check\rho}:\Zcyc_1^g(\check X) \to \Zcyc_0(\check Y^\sm)$
and $\tilde{\check\gamma} : \Zcyc_0(\check Y^\sm)\to \CH_1(\check X)_\Lambda$
respectively denote the restriction map
and the map given by Proposition~\ref{key.prop} applied to~$\check X$.
There is a commutative diagram
\begin{align*}
\xymatrix{
\Zcyc_0(\check Y^\sm)\ar@/^2.0pc/[rr]^{\tilde{\check\gamma}} & \Zcyc_1^g(\check X) \ar[l]_(.45){\tilde{\check \rho}} \ar[r] & \CH_1(\check X)_\Lambda \ar[d]^{\pi_*}\\
\Zcyc_0(Y^\sm)\ar@/_2.0pc/[rr]_{\tilde{\gamma}} \ar[u]^{\pi_Y^*} & \Zcyc_1^g(X) \ar[l]_(.45){\tilde\rho} \ar[u] \ar[r] & \CH_1(X)_\Lambda
}
\end{align*}
whose upward vertical arrows are the naive pull-back homomorphisms
and whose downward vertical arrow is the canonical
 push-forward homomorphism
 $\pi_*:\CH_1(\check X) \to \CH_1(X)$
(see \cite[Sec.\ 20.1]{Ful84}).

We are going to show that
$\tilde{\check\gamma}(\pi_Y^*({\rm div}(g_1)))$ vanishes,
which by the above commutative diagram
will imply that $\tilde \gamma ({\rm div}(g_1))$ vanishes.

Let $\check Y_1$ be the irreducible component of $\check Y$ containing $\check C_1$ and
let $\eta$ be the generic point of $\check C_1$. From here we proceed similarly to type 2
data. Choose a basis $a_1, \ldots, a_{d-1}$ of the   $ \sO_{\check Y_1,\check C_1^\infty\cup\{
\eta\}}/\sI_{\check C_1^\infty\cup\{
\eta\}}$-module $\sI_{\check C_1^\infty\cup\{
\eta\}}/\sI^2_{\check C_1^\infty\cup\{
\eta\}}$, where $\sI$ is the ideal sheaf defining $\check C_1$ in $\check Y_1$. 

Exactly as for type 2 data above we lift the $a_i$ to elements 
\[
\hat  a_1, \ldots,\hat a_{d-1}\in \sO_{\check X,\check C_1^\infty\cup\{
\eta\}}
\]
and obtain the closed subscheme $\hat C_1\subset \check X$ as the Zariski closure of the
closed subscheme of $\Spec  \sO_{\check X,\check C_1^\infty\cup\{ \eta\}}$ defined by
$\hat a_1= \cdots = \hat a_{d-1}=0$.  
Note that $\hat C_1$ is regular around $\check
C_1^\infty\cup\{ \eta\}$ and flat over $A$. Choosing the local parameters
as for type~2 data, we can assume that $\hat C_1\cap \check Y^\sing$ consists of only finitely many
points.

Let $\hat C_1^\infty$ be the finite set of points consisting of the maximal points of
$\hat C_1\cap \check Y$ and the points $\hat C_1\cap \check Y^\sing$. 
Let $\check g_1$ be the rational function on $\check C_1 $ induced by $g_1$.  
There is a unique element $\overline
g_1 \in \sO_{\hat C_1\cap Y, \hat C_1^\infty}^\times$ which restricts to $\check g_1$ on $\check C_1 $
and to $1$ on the other irreducible components of  $\hat C_1\cap Y$. We lift $\overline
g_1$ to an element $\hat g_1\in \sO_{\hat C_1,\hat C_1^\infty}$.

We observe that ${\rm div}(\hat g_1)$ is in $\Zcyc_1^g(\check X)$
and that this
element restricts, via
$\tilde{\check\rho}$, to ${\rm div}(\check g_1)\in \Zcyc_0(\check Y^\sm)$. So ${\rm div}(\hat g_1)
=\tilde{\check\gamma} ( {\rm div} (\check g_1) )
=\tilde{\check\gamma} ( \pi_Y^*({\rm div} (g_1)) )
\in \CH_1(\check X)_\Lambda$.  
Hence
$\tilde{\check\gamma} ( \pi_Y^*({\rm div} (g_1)) )$ vanishes in $\CH_1(\check X)_\Lambda$, as required.
\end{proof}

\smallskip
\begin{cor}\label{cor.isorho}
Under the conditions of Theorem~\ref{thm.res}, the restriction homomorphism
$\rho:\CH_1(X)_\Lambda \to \C(Y)$ is an isomorphism.
\end{cor}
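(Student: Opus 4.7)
The plan is to show that the map $\gamma:\C(Y)\to\CH_1(X)_\Lambda$ constructed just above is a two-sided inverse of $\rho$. Both maps exist under the hypotheses of Theorem~\ref{thm.res}, so once mutual inverseness is established, the isomorphism claim is immediate. The proof is essentially a diagram chase built on two surjectivity inputs and on the compatibilities already encoded in the proof of Proposition~\ref{key.prop} and in the theorem above.

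First I would collect the two surjectivities. By \cite[Thm.\ 2.3]{GLL13} the canonical map $\sigma:\Zcyc_1^g(X)_\Lambda\to\CH_1(X)_\Lambda$ is surjective. By the very definition of $\C(Y)$ as a quotient of $\Zcyc_0(Y^\sm)_\Lambda$, the canonical map $\iota:\Zcyc_0(Y^\sm)_\Lambda\to\C(Y)$ is surjective. These two facts reduce the identities $\gamma\circ\rho=\mathrm{id}$ and $\rho\circ\gamma=\mathrm{id}$ to identities on the level of cycles.

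Next, for $\gamma\circ\rho=\mathrm{id}_{\CH_1(X)_\Lambda}$: by surjectivity of $\sigma$ it suffices to prove $\gamma\circ\rho\circ\sigma=\sigma$. The commutative square of Theorem~\ref{thm.res} gives $\rho\circ\sigma=\iota\circ\tilde\rho$, and the commutative triangle defining $\gamma$ gives $\gamma\circ\iota=\tilde\gamma$, while the defining property of $\tilde\gamma$ from Proposition~\ref{key.prop} is exactly $\tilde\gamma\circ\tilde\rho=\sigma$. Composing these three equalities yields
\[\gamma\circ\rho\circ\sigma=\gamma\circ\iota\circ\tilde\rho=\tilde\gamma\circ\tilde\rho=\sigma,\]
as required.

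For $\rho\circ\gamma=\mathrm{id}_{\C(Y)}$: by surjectivity of $\iota$ it suffices to prove $\rho\circ\gamma\circ\iota=\iota$, i.e.\ $\rho\circ\tilde\gamma=\iota$. For a closed point $y\in Y^\sm$, recall that $\tilde\gamma(y)=\sigma([Z])$ for any flat lift $Z\in\Zcyc_1^g(X)$ with $\tilde\rho([Z])=[y]$ (existence of such a lift is recalled at the start of Section~\ref{sec:lift}, and independence of the choice of lift is precisely Proposition~\ref{key.prop}). Applying $\rho\circ\sigma=\iota\circ\tilde\rho$ we get $\rho(\tilde\gamma(y))=\rho(\sigma([Z]))=\iota(\tilde\rho([Z]))=\iota([y])$, so $\rho\circ\tilde\gamma=\iota$ on generators, and the identity follows. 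There is no real obstacle here: the two nontrivial inputs (existence of $\rho$, existence of $\gamma$) were the content of Theorem~\ref{thm.res} and the theorem preceding this corollary, and the corollary itself is a formal consequence.
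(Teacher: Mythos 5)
Your argument is correct and is exactly the paper's approach: the paper's entire proof of Corollary~\ref{cor.isorho} is the single sentence ``Indeed, $\gamma$ is inverse to $\rho$,'' and your diagram chase simply makes explicit the two compositions $\gamma\circ\rho=\mathrm{id}$ and $\rho\circ\gamma=\mathrm{id}$ using the two surjectivities and the three commuting squares from Theorem~\ref{thm.res}, diagram~\eqref{lift.dia}, and the theorem defining $\gamma$.
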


Indeed, $\gamma$ is inverse to $\rho$.

\section{Around the motivic Gysin homomorphism}\label{sec.gy}

The definition of the cycle group $\C(Y)$ of a snc variety $Y$ over the
perfect field $k$  we gave in Section~\ref{sec.cg1} is purely geometric. However we will see
in the next section that under certain assumptions it has a canonical description as a motivic cohomology group. 
In order to construct such an isomorphism we have to use certain explicit descriptions of
the motivic Gysin map for which we did not find a reference in the literature. 
In this section, we can only sketch the arguments, based on \cite{Voe00b}, \cite{Kel13} and \cite{De12}.

\smallskip

In the following, we work in Voevodsky's triangulated category of geometric motives
$DM_{gm}(k):=DM_{gm}(k;\Lambda)$ with coefficients in $\Lambda$.
Recall
that $\Lambda$ is our fixed commutative ring of coefficients.
 We say that
condition $(\dagger)$ is satisfied if one of the following properties holds:

\begin{itemize}
\item[$(\dagger_1)$]  $\Lambda=\Z [1/p]$ or
\item[$(\dagger_2)$] $\Lambda=\Z$ and resolution of singularities holds over $k$ in the sense of \cite[Def.\ 3.4]{FV00}.
\end{itemize}

Under the condition $(\dagger)$, which in this section we will always assume to be satisfied, it is shown in \cite{Voe00b} and \cite{Kel13} that the category
$DM_{gm}(k)$ is a rigid tensor triangulated category. Furthermore,  to any variety
$Z/k$,  one can associate a motive $M_{gm}(Z)\in DM_{gm}(k)$ and a motive with compact
support $M_{gm}^c(Z)\in DM_{gm}(k)$ satisfying certain functorialities. For a smooth equidimensional variety $Z/k$ of
dimension $d$ there is a canonical duality isomorphism
\eq{}{
M_{gm}(Z)^*=M^c_{gm}(Z)(-d)[-2d].
\notag}
For a closed immersion of varieties $Z_1\to Z_2$ there is a canonical exact triangle 
\eq{eq.loccomp}{
M_{gm}^c(Z_1) \to M_{gm}^c(Z_2) \to  M_{gm}^c(Z_2\setminus Z_1) \xrightarrow{\partial} M_{gm}^c(Z_1)[1]. 
}
For a closed immersion of codimension $c$ of smooth equidimensional varieties $Z_1\to Z_2$
there is a dual Gysin exact triangle
\eq{eq.gytr}{
M_{gm}(Z_2\setminus Z_1) \to M_{gm}(Z_2)\xrightarrow{\rm Gy} M_{gm}(Z_1)(c)[2c] \xrightarrow{\partial} M_{gm}(Z_2\setminus Z_1)[1].
}

The following proposition is \cite[Prop. 1.19(iii)]{De12}.

\begin{prop}\label{prop.Gycomp}
Consider a commutative square of varieties
\[
\xymatrix{
Z'_1 \ar[r] \ar[d] &  Z'_2 \ar[d]_{f} \\
Z_1 \ar[r] & Z_2
}
\]
where the horizontal maps are closed immersions of codimension one between smooth varieties. We
assume that in the sense of divisors $f^*(Z_1)=m Z'_1 $. Then the square of Gysin maps
\[
\xymatrix{
  M_{gm}(Z'_2)\ar[r]^-{ m \cdot \rm Gy} \ar[d]_-{f_*} & M_{gm}(Z'_1)(1)[2]\ar[d] \\
 M_{gm}(Z_2)\ar[r]_-{\rm Gy}  & M_{gm}(Z_1)(1)[2]
}
\]
commutes. Here the upper horizontal arrow is $m$ times the Gysin map.
\end{prop}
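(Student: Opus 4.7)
The plan is to reduce the statement to the elementary identity $c_1(L^{\otimes m}) = m \cdot c_1(L)$ for line bundles in motivic cohomology. The hypothesis $f^*Z_1 = mZ'_1$ translates into $f^* \sO_{Z_2}(Z_1) \cong \sO_{Z'_2}(Z'_1)^{\otimes m}$, and restricting to $Z'_1$ gives $f_1^* \sN_{Z_1/Z_2} \cong \sN_{Z'_1/Z'_2}^{\otimes m}$, where $f_1 : Z'_1 \to Z_1$ denotes the restriction of $f$. This tensor-power relation is the geometric source of the factor $m$ in the upper horizontal arrow of the diagram.

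The first step is to reduce, via deformation to the normal cone, to the case where $Z_2$ is the total space of a line bundle over $Z_1$ with $Z_1$ embedded as the zero section, and similarly for $Z'_2 \to Z'_1$; here $f$ lifts to a morphism between the two deformation spaces by the universal property of the blow-up. The second step is to identify the Gysin map for the zero section $s : Y \hookrightarrow L$ of a line bundle $p : L \to Y$: by $\A^1$-homotopy invariance $M_{gm}(p) : M_{gm}(L) \xrightarrow{\sim} M_{gm}(Y)$, and under this identification the Gysin map becomes cup product with $c_1(L) \in H^{2,1}(Y)$, as follows from the projective bundle formula for $\P(L \oplus \sO_Y)$. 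With both steps carried out, the projection formula for Chern classes, combined with the identity $c_1(\sN_{Z'_1/Z'_2}^{\otimes m}) = m \cdot c_1(\sN_{Z'_1/Z'_2})$, yields the required commutativity.

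The main technical obstacle is the first step: namely, verifying that $f$ lifts to a compatible morphism of deformations to the normal cone, and tracking how the multiplicity of the divisor $f^*Z_1 = mZ'_1$ is propagated into the special fibre under this lift. Once this geometric input is in place, the remaining steps are formal consequences of standard properties of Chern classes in $DM_{gm}(k)$ developed in \cite{Voe00b} and \cite{Kel13}.
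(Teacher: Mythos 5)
The paper does not prove this proposition at all: it simply cites D\'eglise, \cite[Prop.~1.19(iii)]{De12}. So there is no in-paper argument to compare against, and your attempt has to stand on its own. The ingredients you identify are the right ones: the hypothesis $f^*Z_1=mZ'_1$ encodes $f_1^*\sN_{Z_1/Z_2}\cong\sN_{Z'_1/Z'_2}^{\otimes m}$, the codimension-one Gysin map for a zero section is cup product with $c_1$ of the normal line bundle, and the factor $m$ should ultimately come from $c_1(L^{\otimes m})=m\,c_1(L)$.

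The genuine gap is in the first step, and it is not just a matter of ``tracking multiplicities''. The universal property of the blow-up does give a lift $F\colon D_{Z'_1}Z'_2\to D_{Z_1}Z_2$ over the identity of $\A^1$ (the pullback of the ideal of $Z_1\times\{0\}$ becomes invertible on the blow-up), but this $F$ cannot be used to transport the statement to the normal bundles. In local coordinates, for $f\colon\A^1\to\A^1$, $x\mapsto y^m$, one has $D=\Spec k[u,s]$ with $x=us$, $D'=\Spec k[v,s]$ with $y=vs$, and $F(v,s)=(v^ms^{m-1},s)$. For $m>1$ its restriction to the special fibre $s=0$ is the zero map $\sN_{Z'_1/Z'_2}\to\sN_{Z_1/Z_2}$, so the divisorial hypothesis is \emph{not} preserved; worse, $F$ maps the whole special fibre of $D'$ into $Z_1\times\A^1\subset D$, so $F$ does not even restrict to a morphism $D'\setminus(Z'_1\times\A^1)\to D\setminus(Z_1\times\A^1)$, and hence induces no map of the relative motives $M_{gm}(D'/D'\setminus Z'_1\times\A^1)\to M_{gm}(D/D\setminus Z_1\times\A^1)$ that the purity zigzag requires. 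A fix exists but must be supplied: pull $D_{Z_1}Z_2\to\A^1$ back along the $m$\nobreakdash-th power map $[m]\colon\A^1\to\A^1$; then $f$ lifts to $(v,s)\mapsto(v^m,s)$, the special-fibre map becomes the fibrewise $m$-th power $\sN_{Z'_1/Z'_2}\to f_1^*\sN_{Z_1/Z_2}$, the open complements are respected, and one must check (using $\A^1$-invariance) that the $[m]$-pulled-back deformation still computes the same purity isomorphism. Alternatively, one can bypass deformation entirely and compute directly with the relative motives $M_{gm}(Z_2/Z_2\setminus Z_1)$, reducing by Zariski localization to the self-map $x\mapsto x^m$ of $\G_m$, whose reduced motive is multiplication by $m$ on $\Lambda(1)[1]$. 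Without one of these inputs, the argument as written does not establish the commutativity.
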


Recall \cite[p.\ 197]{Voe00b} that Suslin homology $h^S_j(Y)$ of a variety $Y/k$ can be described in terms of motivic homology as
\[
 h^S_j(Y) =\Hom_{DM_{gm}(k)}(\Lambda[j],M_{gm}(Y) ).
\]

\begin{lem}\label{lem.sus0}
There is an
exact sequence
\[
0\to R \to \Zcyc_0(Y)_\Lambda \to h^S_0(Y) \to 0,
\]
where $R$ is the $\Lambda$-module of zero-cycles generated by the divisors of rational
functions $g$, on integral closed curves $C\subset Y$, which satisfy the following
property: Let $\overline C$ be the compactification of $C$ which is normal outside $C$ and let $C^\infty\subset
\overline C$ be the points not lying over $C$. We endow $C^\infty$ with the reduced
subscheme structure. Then we allow those $g$ satisfying $$g\in \ker \big(\sO_{\overline
  C,C^\infty}^\times \to \sO_{C^\infty}^\times\big)\rlap{\text{.}}$$
\end{lem}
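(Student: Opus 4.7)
\emph{Plan.} The plan is to identify $h_0^S(Y)$ with the cokernel of the boundary of the Suslin complex and then to match that cokernel with $\Zcyc_0(Y)_\Lambda / R$. By Voevodsky's construction, $h_0^S(Y)$ equals $H_0$ of the Suslin complex $C_\bullet^{\mathrm{SV}}(Y)$, whose degree $n$ term is the free $\Lambda$-module on integral closed $W \subseteq \Delta^n \times Y$ which are finite and surjective over $\Delta^n$. In particular $C_0^{\mathrm{SV}}(Y) = \Zcyc_0(Y)_\Lambda$, so it suffices to prove that the image of the boundary $\partial_0 - \partial_1 : C_1^{\mathrm{SV}}(Y) \to \Zcyc_0(Y)_\Lambda$ coincides with $R$.

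\emph{Image $\subseteq R$.} Take an integral $W \subseteq \A^1 \times Y$ finite and surjective over $\A^1$, and let $C$ be the (reduced) image of $W$ in $Y$. If $\dim C = 0$, then $W = \A^1 \times C$ and $\partial_0 W = \partial_1 W$, so we may assume $\dim C = 1$. Choose a compactification $\bar C$ of $C$ that is normal outside $C$, and let $\bar W$ be the integral closure of $\bar C$ in $k(W)$, so that $\pi : W \to C$ extends to a finite $\bar\pi : \bar W \to \bar C$. The coordinate $t$ on $\A^1$ restricts to a regular function on $W$ taking value $\infty$ at each point of $\bar W \setminus W$ (which maps to $C^\infty$ under $\bar\pi$). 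Setting $g := \mathrm{Nm}_{k(W)/k(C)}(t/(t-1)) \in k(\bar C)^\times$ and using the classical formula for the pushforward of a principal divisor along a finite morphism, one obtains
\begin{equation*}
\partial_0 W - \partial_1 W \;=\; \bar\pi_* {\rm div}_{\bar W}(t/(t-1)) \;=\; {\rm div}_{\bar C}(g).
\end{equation*}
For any $w \in \bar\pi^{-1}(C^\infty)$, the value $(t/(t-1))(w)$ equals $1$ since $t(w) = \infty$; hence $g$ is a unit at $C^\infty$ with value $1$, so ${\rm div}_{\bar C}(g) \in R$.

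\emph{$R \subseteq $ image.} Conversely, given $(C, g)$ with $g$ non-constant, set $t := g/(g-1) \in k(\bar C)$. Since $g \equiv 1$ on $C^\infty$, the pole locus $\{g = 1\}$ of $t$ contains $C^\infty$, so $U := \bar C \setminus \{g = 1\}$ is an open subscheme of $C$, hence of $Y$, on which $t$ defines a morphism $U \to \A^1$. Let $W \subseteq \A^1 \times Y$ be the scheme-theoretic image of $(t,\iota) : U \to \A^1 \times Y$: it is an integral closed curve, finite and surjective over $\A^1$ (properness follows from the valuative criterion, since the boundary points of $U$ in $\bar C$ are all mapped to $\infty$ by $t$). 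A direct computation then yields $\partial_0 W - \partial_1 W = {\rm div}_{\bar C}(g)$.

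\emph{Main obstacle.} The crucial delicate point is the matching between the boundary condition $g \equiv 1$ on $C^\infty$ and the pole behaviour of $t$ at the points at infinity of $\bar W$ (resp.\ $\bar C$): it pins down the value of the norm at $C^\infty$ in the forward direction, and guarantees properness of the correspondence $W \to \A^1$ in the reverse direction. This boundary condition is precisely the refinement over the naive Chow-group relations that is needed to describe $h_0^S(Y)$ for non-proper $Y$.
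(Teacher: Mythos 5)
Your proof is essentially correct, and it is a genuinely self-contained argument in a place where the paper merely cites Schmidt's Theorem~5.1 in \emph{Singular homology of arithmetic schemes}; the substitution $t\leftrightarrow g$ via $t=g/(g-1)$, $g=t/(t-1)$, together with the finiteness over $\A^1$ and the boundary/modulus bookkeeping, is exactly the computation that one expects lies behind that citation. Spelling it out, as you do, is more informative than the paper's one-line reference.

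One slip deserves a flag. In the forward inclusion you write that the points of $\bar W\setminus W$ map to $C^\infty$ under $\bar\pi$. This is false in general: $\bar W$ may contain points lying over $C$ itself that are not in (the normalization $\tilde W$ of) $W$ --- for example $Y=\A^1_x$, $W=V(x^2-tx+1)\cong\G_m$, $C=\A^1_x$, where $\bar W=\P^1$ and the point $x=0$ of $\bar W\setminus W$ maps to $C$ and not to $C^\infty$. What \emph{is} true, and what you actually use, is the converse inclusion $\bar\pi^{-1}(C^\infty)\subseteq \bar W\setminus \tilde W$; and (by the valuative criterion for the finite morphism $\tilde W\to \A^1$) $t$ has a pole at \emph{every} point of $\bar W\setminus\tilde W$. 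Hence $t/(t-1)$ is a unit of value~$1$ at all of $\bar W\setminus\tilde W$, so both that $\Div_{\bar W}(t/(t-1))$ is supported on $\tilde W$ --- which is needed to identify $\bar\pi_*\Div_{\bar W}(t/(t-1))$ with $\partial_0 W-\partial_1 W$ via the projection $W\to Y$ --- and that $g=\mathrm{Nm}(t/(t-1))$ is a unit equal to~$1$ along $C^\infty$. Your conclusion therefore stands, but the parenthetical misstates the topology of $\bar W\setminus W$. A couple of smaller imprecisions: $U$ is locally closed, not open, in $Y$; and ``$\pi\colon W\to C$ extends to $\bar\pi\colon\bar W\to\bar C$'' should be read as a statement about $\tilde W\hookrightarrow\bar W$, since $W$ itself need not be a subscheme of $\bar W$.
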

 
The lemma is deduced from the definition of Suslin homology, see
\cite[Theorem~5.1]{Sch}.
Note that the rational functions in Lemma~\ref{lem.sus0} are similar to the rational
functions in type 1 data in Section~\ref{sec.cg1}.

\smallskip

For simplicity of notation, we write
\begin{align*}
H^{i,j}(Y) &= \Hom_{DM_{gm}(k)}( M_{gm}(Y),\Lambda(j)[i])\rlap{\text{,}} \\
H_c^{i,j}(Y) &=  \Hom_{DM_{gm}(k)}( M_{gm}^c(Y),\Lambda(j)[i])\rlap{\text{,}}\\
H_{i,j}(Y) &= \Hom_{DM_{gm}(k)}( \Lambda(j)[i], M_{gm}(Y))\rlap{\text{,}}\\
H^c_{i,j}(Y) &= \Hom_{DM_{gm}(k)}( \Lambda(j)[i], M^c_{gm}(Y))\rlap{\text{.}}
\end{align*}
Motivic cohomology has an explicit description in terms of a cdh-cohomology group
\eq{eq.mothcdh}{
H^{i,j}(Y) = H^i(Y_{\rm cdh}, \Lambda(j)), 
}
where $ \Lambda(j)=C_* \Lambda_{tr}(\mathbb G^{\wedge j}_m)[-j]$ is the bounded above complex of
sheaves of $\Lambda$\nobreakdash-modules defined in \cite{SV02}. If $Y/k$ is smooth we can replace the
cdh-topology by the Nisnevich topology in equation~\eqref{eq.mothcdh}.

\smallskip

Let now $Y/k$ be a smooth integral curve with smooth compactification~$\overline Y$.
We write $Y^\infty$ for the reduced closed subscheme $\overline Y\setminus Y$. The Gysin exact triangle~\eqref{eq.gytr}
gives us a morphism $\partial:M_{gm}(Y^\infty)(1)[1] \to M_{gm}(Y)$. Note that there are canonical
isomorphisms
\begin{align}\label{eq.isounit}
\sO(Y^\infty)^\times \otimes_\Z \Lambda  & =
\Hom_{DM_{gm}(k)}(M_{gm}(Y^\infty),\Lambda(1)[1])\\
& =  \Hom_{DM_{gm}(k)}(\Lambda, M_{gm}(Y^\infty)(1)[1]), \nonumber
\end{align}
For the first equality use \cite[Lec.\ 4]{MVW}.
Consider the diagram
\eq{eq.diares}{
\begin{aligned}
\xymatrix{
\sO_{\overline Y,Y^\infty}^\times \ar[r]^-{\alpha} \ar[d]_{\rm div}  & \sO(Y^\infty)^\times \ar[r]^-{\beta} &
\Hom_{DM_{gm}(k)}(\Lambda, M_{gm}(Y^\infty)(1)[1]) \ar[d]^{\partial}\\
\Zcyc_0(Y) \ar[rr] & &   \Hom_{DM_{gm}(k)}(\Lambda,M_{gm}(Y) ) = h^S_0(Y)
}
\end{aligned}
}
where the map $\alpha$ is the restriction map to the product of the residue fields of the
semi-local ring
$\sO_{\overline Y,Y^\infty}$ and $\beta$ is the map induced by~\eqref{eq.isounit}.

\begin{prop}\label{prop.commuGy}
The diagram \eqref{eq.diares} commutes.
\end{prop}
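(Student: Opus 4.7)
The plan is to reduce the commutativity of~\eqref{eq.diares} to a universal situation on $\P^1_k$ via the morphism $\tilde g \colon \overline Y \to \P^1$ determined by~$g$, and then to verify the universal case by direct computation.

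First, since $\overline Y$ is a smooth projective curve, the element $g \in \sO_{\overline Y, Y^\infty}^\times$ extends uniquely to a morphism $\tilde g \colon \overline Y \to \P^1$, and the hypothesis that $g$ is a unit at $Y^\infty$ forces $\tilde g(Y^\infty) \subset \G_m$. Setting $T := \tilde g^{-1}(\{0,\infty\})$, one has $T \subset Y$ disjoint from $Y^\infty$, with ${\rm div}(g) = \tilde g^*([0]) - \tilde g^*([\infty])$ supported on~$T$. I would apply Proposition~\ref{prop.Gycomp} at each point $y \in T$, with local multiplicity $v_y(g)$, and assemble the resulting squares using the additivity of the localization triangle~\eqref{eq.loccomp} with respect to the disjoint decomposition of $T$. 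This produces a commutative diagram relating the Gysin triangle of $T \subset \overline Y$ with that of $\{0,\infty\} \subset \P^1$. Precomposing with the open immersion $Y^\infty \hookrightarrow \overline Y \setminus T$ and using the naturality of~\eqref{eq.loccomp} for the triple $Y^\infty \subset Y \subset \overline Y$, one reduces the commutativity of~\eqref{eq.diares} to the case of the tautological coordinate~$t$ on $\P^1$, with $Y^\infty$ replaced by a reduced closed subscheme of $\G_m$.

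Second, I would verify the universal assertion by an explicit calculation. Consider the prototypical situation $(\overline Y, Y^\infty) = (\P^1, \{0,\infty\})$ and $g \in \sO_{\P^1, \{0,\infty\}}^\times$. Using the canonical splitting $M_{gm}(\G_m) = \Lambda \oplus \Lambda(1)[1]$ of~\cite[Lec.\ 4]{MVW}, one has $h^S_0(\G_m) = \Lambda \oplus (k^\times \otimes \Lambda)$, under which a rational point $a \in \G_m(k)$ maps to $(1,a)$. The Gysin triangle for $\{0,\infty\} \subset \P^1$ identifies the map $\partial$ with $(u_0,u_\infty) \mapsto (0, u_0 u_\infty^{-1})$ on $\Hom_{DM_{gm}(k)}(\Lambda, M_{gm}(\{0,\infty\})(1)[1]) = k^\times \oplus k^\times$. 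Taking $g = c(t-a)/(t-b)$ with $a,b,c \in k^\times$, one computes $\alpha(g) = (ca/b, c)$, so that $\partial(\beta(\alpha(g))) = (0, a/b)$, which agrees with the image of ${\rm div}(g) = [a] - [b]$ in $h^S_0(\G_m)$.

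The main obstacle will be Step~1. Proposition~\ref{prop.Gycomp} as stated concerns a single pair of smooth closed immersions of codimension one, so extending it to the finite subscheme $T \subset \overline Y$ requires a form of additivity of the Gysin triangle over $T = \bigsqcup_y \{y\}$, together with careful tracking of the local multiplicities $v_y(g)$ and of whether $\tilde g(y) = 0$ or $\tilde g(y) = \infty$. Once this is organized, the sign convention ${\rm div}(g) = \tilde g^*([0]) - \tilde g^*([\infty])$ combines with the computation in Step~2 to yield the desired commutativity.
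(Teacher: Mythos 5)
Your approach -- functorial reduction to $\P^1$ along $\tilde g$ followed by an explicit computation -- is genuinely different from the paper's, which identifies both composites in \eqref{eq.diares} as two descriptions of the boundary of the single homotopy cartesian square
\[
\xymatrix{
M_{gm}(V) \ar[r]\ar[d] & M_{gm}(\overline Y) \ar[d]\\
M_{gm}(V/Y^\infty) \ar[r] & M_{gm}^c(Y)
}
\]
(with $V$ an open neighborhood of $Y^\infty$), using the duality $h^S_0(Y)\cong\Hom(M^c_{gm}(Y),\Lambda(1)[2])$ -- a two-line structural argument with essentially no computation. Unfortunately, as written, your reduction step has a real gap.

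The central problem is a confusion between $T=\tilde g^{-1}(\{0,\infty\})$ and $Y^\infty$. The boundary $\partial$ in \eqref{eq.diares} comes from the Gysin triangle of the closed immersion $Y^\infty\hookrightarrow\overline Y$, whereas Proposition~\ref{prop.Gycomp} applied to $\tilde g$ compares the Gysin data of $T\subset\overline Y$ with that of $\{0,\infty\}\subset\P^1$; these are different pairs, and $\tilde g$ does not carry $Y^\infty$ to $\{0,\infty\}$ (it sends $Y^\infty$ into $\G_m$, typically non-injectively and with no control on the image). So no direct naturality statement relates $\partial$ for $Y^\infty\subset\overline Y$ to $\partial$ for $\{0,\infty\}\subset\P^1$ via $\tilde g$. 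Relatedly, the phrase ``the triple $Y^\infty\subset Y\subset\overline Y$'' is not a chain of closed immersions, since $Y=\overline Y\setminus Y^\infty$ is disjoint from $Y^\infty$; and $Y^\infty\hookrightarrow\overline Y\setminus T$ is a closed, not open, immersion. Finally, the target of your Step 1 (the coordinate $t$ with $Y^\infty$ an arbitrary reduced closed subscheme of $\G_m$, so $T=\{0,\infty\}$ and ${\rm div}(t)=[0]-[\infty]$) does not match what Step 2 actually computes ($Y^\infty=\{0,\infty\}$, $g=c(t-a)/(t-b)$ with $k$-rational $a,b$); in the reduced configuration $T=\{0,\infty\}$ and $Y^\infty$ are disjoint from each other and from the zeros of $g$, which is not the situation of Step 2. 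Each of these points needs to be fixed before the reduction goes through, and it is not clear the argument survives. The paper avoids all of this by never localizing at $T$ at all.
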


\begin{proof}
Let $V\subset \overline Y$ be an open neighborbood of $Y^\infty$.
We have the duality $h_0^S(Y) = \Hom_{DM_{gm}(k)} (M_{gm}^c(Y),\Lambda(1)[2])$ and the
isomorphism 
\[
\mathcal O_{V}^\times(V)\otimes \Lambda = \Hom_{DM_{gm}} (M_{gm}(V),  \Lambda(1)[1]).
\]
Via these isomorphisms the two morphisms 
\[
\Hom_{DM_{gm}} (M_{gm}(V),  \Lambda(1)[1])  \rightrightarrows  \Hom_{DM_{gm}(k)} (M_{gm}^c(Y),\Lambda(1)[2])
\]
from diagram~\eqref{eq.diares} are by definition both induced by the boundary map of the
homotopy cartesian square
\[
\xymatrix{
M_{gm}(V) \ar[r]\ar[d] & M_{gm}(\overline Y) \ar[d]\\
M_{gm}(V/Y^\infty) \ar[r] & M_{gm}^c(Y)\rlap{\text{.}}
}
\]
Here, for a morphism $Z_1\to Z_2$, the relative motive $M_{gm}(Z_2/Z_1)$ is defined as the mapping cone of
the morphism of complexes of Nisnevich sheaves with transfers $M_{gm}(Z_1)\to M_{gm}(Z_2)$; we view it as an object
of $DM_{gm}(k)$.
The motive with compact support $ M_{gm}^c(Y)$ can be
identified with $M_{gm}(\overline Y/Y^\infty)$, which explains the lower horizontal map in
the square. The square is homotopy cartesian as the cones of the upper and of the lower horizontal
maps are isomorphic to $M_{gm}(\overline Y / V)$.

One way to get the boundary map of the cartesian square is, via the above identification,  as the composition of 
\[
M_{gm}(\overline Y/Y^\infty)  \xrightarrow{} M_{gm}(\overline Y / V)\to M_{gm}(V)[1]\rlap{\text{,}}
\]
which corresponds to the left/lower path in the diagram~\eqref{eq.diares}. Indeed, by the
Gysin isomorphism we get
\[
\Hom_{DM_{gm}} ( M_{gm}(\overline Y / V) , \Lambda(1)[2]) = \bigoplus_{y\in \overline Y\setminus V}
\Lambda .
\]   Another way is
as the composition of
\[
M_{gm}^c(Y) \to M_{gm}(Y^\infty)[1] \to M_{gm}(V)[1]\rlap{\text{,}}
\]
which corresponds to the upper/right path in the diagram~\eqref{eq.diares}. This is clear
in view of the isomorphism~\eqref{eq.isounit}.
\end{proof}

Let $Y$ be equidimensional of dimension $d$ and let $y\in Y^\sm$ be a closed point. The
Gysin morphism and the description of motivic cohomology in terms of Milnor $K$-theory
\cite[Lec.\ 5]{MVW}
induces a morphism
\eq{}{
K^M_{j-d} (k(y)) \xrightarrow{\iota_y} H^{j+d,j}(Y). \notag
}
Here Milnor $K$-theory is taken with $\Lambda$-coefficients. 

\begin{prop}\label{prop.calcmotco}
Let $Y/k$ be a snc variety of dimension $d$ and let $U\subset Y^\sm$ be a dense open
subscheme. 
\begin{itemize}
\item[(i)] For $i-j>d$, the group $H^{i,j}(Y)$ vanishes.
\item[(ii)] For $j\ge d$, the map
\eq{eq.lemcalc}{
\oplus_y \iota_y : \bigoplus_{y\in U_{(0)}} K^M_{j-d} (k(y)) \to  H^{j+d,j}(Y)  
}
is surjective.
\end{itemize}
\end{prop}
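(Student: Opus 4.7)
The plan is to prove (i) and (ii) simultaneously by induction on the pair $(d,n)$ ordered lexicographically, where $d=\dim Y$ and $n$ is the number of irreducible components of $Y$. The base case is $n=1$, i.e.\ $Y$ is a smooth variety of dimension $d$. Using \eqref{eq.mothcdh}, $H^{i,j}(Y)=H^i(Y_{\rm Nis},\Lambda(j))$; since the motivic complex $\Lambda(j)$ has cohomological amplitude at most $j$ and $Y$ has Krull dimension $d$, the hypercohomology spectral sequence gives (i). For (ii) with $U=Y$, the Gersten resolution of $\Lambda(j)$ on smooth $Y$ presents $H^{j+d,j}(Y)$ as a quotient of $\bigoplus_{y\in Y_{(0)}} K^M_{j-d}(k(y))$. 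For a general dense open $U\subset Y$, a generator $\iota_{y_0}(k_0)$ with $y_0\in Y_{(0)}\setminus U$ is rewritten modulo the Gersten differential by means of a Milnor $K$-theoretic tame-symbol relation along a smooth curve through $y_0$ meeting $U$, a standard Bertini-plus-reciprocity argument that relies on the exactness of the Gersten complex in the next-to-top degree.

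For the inductive step, decompose $Y=Y_1\cup Y'$ with $Y_1$ an irreducible component and $Y'=\overline{Y\setminus Y_1}$; by Lemma~\ref{prop:sncc}, $Y_1\cap Y'$ is a snc variety of dimension at most $d-1$, and $Y'$ is a snc variety with $n-1$ components. The square
\[
\begin{array}{ccc}
Y_1\cap Y' & \hookrightarrow & Y_1 \\
\downarrow & & \downarrow \\
Y' & \hookrightarrow & Y
\end{array}
\]
is an abstract blow-up, so cdh-descent yields a Mayer--Vietoris long exact sequence
\begin{multline*}
\cdots \to H^{i-1,j}(Y_1\cap Y') \xrightarrow{\delta} H^{i,j}(Y) \\ \to H^{i,j}(Y_1)\oplus H^{i,j}(Y') \to H^{i,j}(Y_1\cap Y') \to \cdots.
\end{multline*}
For (i) with $i>j+d$, all three relevant terms vanish by induction. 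For (ii) with $i=j+d$, the rightmost term $H^{j+d,j}(Y_1\cap Y')$ vanishes by (i) applied to $Y_1\cap Y'$, whose dimension is strictly less than $d$. Hence $H^{j+d,j}(Y)$ surjects onto $H^{j+d,j}(Y_1)\oplus H^{j+d,j}(Y')$; by induction both summands are generated by $\iota_y$-images from closed points $y\in U\cap Y_i\subset Y^{\rm sm}$, and each class $\iota_y(k_y)\in H^{j+d,j}(Y)$ provides a lift (it restricts to the expected class on $Y_i$ and to zero on the other component, since $y$ lies on a single component of $Y$). After subtracting such lifts, any class of $H^{j+d,j}(Y)$ is pushed into the image of $\delta$.

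The main obstacle is showing that this image of $\delta$ itself lies in the image of $\bigoplus_{y\in U_{(0)}} K^M_{j-d}(k(y))$. By the inductive case of (ii) in dimension $d-1$ (the hypothesis $j\geq d$ yields $j\geq d-1$), the group $H^{j+d-1,j}(Y_1\cap Y')$ is generated by $\iota_z(\beta)$ for $z$ a closed point in a dense open of $(Y_1\cap Y')^{\rm sm}$ and $\beta\in K^M_{j-d+1}(k(z))$. For each such datum one selects a smooth integral curve $C\subset Y_1$ through $z$ that is transverse to $Y_1\cap Y'$ at $z$, avoids the remaining strata of $Y^{\rm sing}$, and meets $U\cap Y_1$ in a nonempty finite set of closed points, and one lifts $\beta$ to a class $\tilde\beta\in K^M_{j-d+1}(k(C))$ whose Gersten residue at $z$ equals $\beta$. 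The decisive ingredient is the compatibility between the cdh boundary $\delta$ and the motivic Gysin map for the codimension-one immersion $Y_1\cap Y'\hookrightarrow Y_1$ encoded in Propositions~\ref{prop.Gycomp} and~\ref{prop.commuGy}: this compatibility identifies $\delta(\iota_z(\beta))$ with the sum of the $\iota_{y'}$-images of the Gersten residues of $\tilde\beta$ at the other closed points of the smooth compactification of $C$. Weil reciprocity on that compactification, combined with the choice of $C$ meeting $U$, then places all these contributions at closed points of $U$, completing the induction.
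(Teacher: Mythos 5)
Your overall architecture is the same as the paper's: a double induction on the dimension $d$ and on the number of irreducible components, using the cdh Mayer--Vietoris sequence for $Y=Y_1\cup Y'$, and a base case for smooth $Y$ via the coniveau/Gersten presentation together with an elementary moving argument. Part (i) is also argued as in the paper. The problems are concentrated in the way you handle the Mayer--Vietoris boundary.

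The ``decisive ingredient'' you invoke does not exist in the form you claim. Propositions~\ref{prop.Gycomp} and~\ref{prop.commuGy} concern the Gysin (localization) triangle for a closed immersion of smooth varieties and the boundary of the triangle for $Y^\infty \subset \overline Y$; neither of them says anything about the cdh/Mayer--Vietoris boundary $\delta$ associated to the abstract blow-up square $\{Y_1\cap Y', Y_1, Y', Y\}$. Relating the two boundaries is exactly the nontrivial point, and in the paper it is settled by a separate argument with relative motives: one shows that, after restricting to an open neighborhood $V$ of $Y_1\cap Y'$ in $Y_1$, the Mayer--Vietoris boundary factors as $M_{gm}(Y)\to M_{gm}(Y/(V\cup Y'))\xleftarrow{\sim}M_{gm}(Y_1/V)\xrightarrow{\partial_{(Y_1,V)}}M_{gm}(V)[1]$. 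The middle excision isomorphism, proved by playing off two Mayer--Vietoris triangles against the defining cones of the relative motives, is the new input that your argument is missing. Without it, the identity $\delta(\iota_z(\beta))=\sum_{y'}\iota_{y'}(\cdot)$ is not established.

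Two secondary issues. First, the curve you choose is a smooth curve $C\subset Y_1$ meeting $Y_1\cap Y'$; by the definition in Section~\ref{ss:snc} this is \emph{not} a snc subcurve of $Y$, so there is no commutative diagram of Gysin maps $M_{gm}(Y)\to M_{gm}(C)(c)[2c]$ etc.\ in $DM_{gm}(k)$ of the sort needed to reduce to $d=1$. The paper instead takes $C$ to be a genuine snc subcurve on $Y$ (with pieces $C_1=C\cap Y_1$ and $C'=C\cap Y'$) so that all four Gysin maps in the relevant diagram make sense; this is what lets one pass from dimension $d$ to dimension $1$. Second, there is a degree mismatch in ``one lifts $\beta$ to a class $\tilde\beta\in K^M_{j-d+1}(k(C))$ whose Gersten residue at $z$ equals $\beta$'': with $\beta\in K^M_{j-d+1}(k(z))$, the residue of a class in $K^M_{j-d+1}(k(C))$ at the closed point $z$ lands in $K^M_{j-d}(k(z))$, not $K^M_{j-d+1}(k(z))$. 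What one needs is that $\tilde\beta$ be regular at $z$ with \emph{restriction} (not residue) equal to $\beta$ there, and that $\tilde\beta$ be regular at all closed points outside $U$; the compatibility then identifies $\delta(\iota_z(\beta))$ with the sum of Gysin images of the residues of $\tilde\beta$ at the closed points of $U$. Your appeal to Weil reciprocity is not needed for this (it belongs to the base-case moving argument).
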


\begin{proof}
(i) This is clear from the calculation of $H^{i,j}(Y)$ in terms of cdh-cohomology
as the complex $\Lambda(j)$ lies in $D^{\le j}(Y;\Lambda)$ and the cdh-cohomological
dimension of $Y$ is $d$ \cite[App.]{SV02}.

\smallskip

(ii) We use a double induction on $d$ and on the number $r$ of irreducible components of $Y$.
For $Y=Y_1$ smooth we can use  the coniveau spectral sequence, see \eqref{eq.coniv1}, to get an isomorphism 
\eq{}{
 H^{j+d,j}(Y_1)  \cong A_0(Y_1,j-d)  \notag
}
where the right-hand side is Rost's Chow group with coefficients in Milnor $K$\nobreakdash-theory \cite{R96}.
An elementary moving technique shows that the canonical
map 
\eq{eq.movs}{
\bigoplus_{y\in U_{(0)}} K^M_{j-d}(k(y))\to A_0(Y_1,j-d) 
}
is surjective. In fact, for $\xi \in  K^M_{j-d}(k(x))$ with $x$ closed in $ Y_1\setminus U$,
choose $x'\in U_{(1)}$ such that $x$ lies in the regular locus of $W=\overline{\{x'\}}$.
Choose $ \xi'\in  K^M_{j-d+1}(k(x'))$ such that the residue symbol of $\xi'$ at $x$ is
$\xi$ and such that the residue symbols of $\xi'$ at the other points of $W\cap
(Y_1\setminus U)$ vanish. Now $[\xi]\in A_0(Y_1,j-d)$ is equal to 
$-\sum_{y\in (W\cap  U)_{0} } [\partial_y(\xi')]$, which belongs to the image of \eqref{eq.movs}.

For general $Y$ with irreducible components $Y_1, \ldots, Y_r$, we consider the decomposition $Y=Y_1\cup Y'$ with $Y'=Y_2 \cup \ldots \cup Y_r$. Part of the exact Mayer--Vietoris sequence for
the cdh-covering $Y_1\coprod Y'\to Y$ reads
\eq{}{
H^{j+d-1,j}(Y_1\cap Y') \xrightarrow{\alpha } H^{j+d,j}(Y) \to H^{j+d,j}(Y_1)\oplus H^{j+d,j}(Y') . \notag
}
By the induction assumption, we already know that the image of $\oplus_{y\in U_{(0)}}
\iota_y$ maps surjectively onto the direct sum on the right-hand side. So we only have to show that the
image of $\alpha$ also lies in the image of $\oplus_{y\in U_{(0)}}
\iota_y$.

By the induction assumption, we know that the homomorphism 
\eq{eq.milmot2}{
\iota_y:\bigoplus_{y\in (Y_1\cap Y')^\sm_{(0)}} K^M_{j-d+1}(k(y))\to  H^{j+d-1,j}(Y_1\cap Y')
}
induced by the Gysin map is surjective. We have to give an explicit calculation of its
composition with $\alpha$. 

\begin{claim}
The image of the composition of \eqref{eq.milmot2} and $\alpha$ is contained in the image
of the map \eqref{eq.lemcalc}.
\end{claim}

Let $y$ be a closed point in $(Y_1\cap Y')^{\sm}$.
By the Bertini theorem, we can choose a snc subcurve $C$ on $Y$
containing $y$ such that $C\cap U$ is dense in $C$ (in the case of a finite base field $k$,
use \cite{Po}). Let $C_1=C\cap Y_1$ and $C'=C\cap Y'$. In the triangulated category $DM_{gm}(k)$, we have the commutative diagram of
Gysin maps
\begin{align}
\begin{aligned}
\small
\label{eq.comcur}
\xymatrix@C=1.3em{
M_{gm}(Y_1\cap Y')
\ar[r]\ar[d]_{\rm Gy}    & M_{gm}(Y_1) \oplus M_{gm}(Y') \ar[r]\ar[d]_{\rm Gy} &
M_{gm}(Y) \ar[r]^-{\partial} \ar[d]_{\rm Gy} & \\ 
M_{gm}(C_1\cap C')(c)[2c]\ar[r] & \left(M_{gm}(C_1) \oplus M_{gm}(C')\right)(c)[2c] \ar[r] &    M_{gm}(C)(c)[2c] \ar[r]  &
}
\end{aligned}
\end{align}
where $c=d-1$. These Gysin maps are constructed via the Gysin maps for smooth schemes by
using the Cech simplicial scheme associated to the covering by irreducible components. This shows that we can assume $d=1$ in the proof of the claim. 
For $d=1$, one gets a commutative
diagram 
\[
\xymatrix{
K^M_j(\sO_{Y_1,Y_1\setminus (U\cap Y_1)} ) \ar[d] \ar@{->>}[r]  & K^M_j(Y_1\cap Y') \ar[d]^\wr\\
  H^{j,j}(\sO_{Y_1,Y_1\setminus (U\cap Y_1)} ) \ar[r] \ar[d]_{\partial}  & H^{j,j}(Y_1\cap
Y') \ar[d]_{\alpha} \\
\bigoplus_{y\in (U\cap Y_1)_{(0)}}  H^{j-1,j-1}(y) \ar[r]_-{\oplus {\rm Gy}_y}  &  H^{j+1,j}(Y)\rlap{\text{.}}
}
\]
Here, the Milnor $K$-group of a ring is simply the quotient of the tensor algebra over the
units by the Steinberg relations and the maps from Milnor $K$-theory to motivic cohomology
are as defined in \cite[Lec.~5]{MVW}.
The lower square is commutative because for an open neighborhood $V$ of $Y_1\cap Y'$ in $Y_1$,
 the composition of $\partial$ from \eqref{eq.comcur} with $M_{gm}(Y_1\cap Y')\to M_{gm}(V)$
is equal to the composition of the maps
\[
M_{gm}(Y)   \to M_{gm}(Y/(V\cup Y')) \xleftarrow{\sim} M_{gm}(Y_1/V) \xrightarrow{\partial_{(Y_1,V)} } M_{gm}(V)[1]\rlap{\text{.}}
\]
We now explain the isomorphism in the middle: 
by Mayer--Vietoris applied  
 to the 
covers $Y=Y_1\cup Y'$ and $V\cup Y'$  (\cite[Prop.~4.1.3]{Voe00b}, \cite[Prop.~5.5.4]{Kel13}), one has a commutative diagram in which the rows are exact triangles  
\[\xymatrix{ \ar[d] M_{gm}(V) \ar[r] & \ar[d]  M_{gm}(V\cup Y') \ar[r] &  \ar@{=}[d] M_{gm}(Y'/Y'\cap Y_1) \\
\ar[d] M_{gm}(Y_1) \ar[r] & \ar[d]  M_{gm}(Y_1\cup Y') \ar[r] & M_{gm}(Y'/Y'\cap Y_1)\\
M_{gm}(Y_1/V) \ar[r]^-{ \sim} & M_{gm}(Y/V\cup Y')\rlap{\text{.}}
}
\]
 In addition, by definition of  $M_{gm}(Y_1/V)$ and $M_{gm}(Y/V\cup Y')$, the columns are exact. This implies the isomorphism. 
This finishes the proof of
the claim and also of Proposition~\ref{prop.calcmotco}.
\end{proof}

Let now $X/k$ be a connected smooth variety and $Y\subset X$ a snc
divisor. 
For a closed point $y\in Y^\sm$, let
\eq{eq.lamd}{
\lambda_y:k(y)^\times \to h^S_0(X\setminus Y)
}
be the composite homomorphism
\[
k(y)^\times\xrightarrow{\sim} H^{1,1}(y) =H_{1,1}(y) \to H_{1,1}(Y^\sm) \xrightarrow{\partial} h^S_0(X\setminus Y). 
\]

\begin{prop}\label{prop.lamde}
The maps $\lambda_y$ ($y\in Y^\sm$) are uniquely characterized by the following property.
Let $C\subset X$ be an integral closed curve disjoint
from $Y^\sing$
and not contained in~$Y$. Let $C^\infty=(C\cap Y)_{\rm red}$ and let $\eta$ be the generic
point of $C$. For $y\in C^\infty$,
let $m_y$ be the intersection multiplicity of $C$ and $Y$ at $y$. Consider an element
$g\in \sO_{C,\{\eta \}\cup C^\infty}^\times $. Then we have
\eq{eq.propcu}{
{\rm div}(g) + \sum_{y\in C^\infty} m_y \lambda_y(g|_{y}) =0 \quad \text{ in } \quad
h^S_0(X\setminus Y).
}
\end{prop}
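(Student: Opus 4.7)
\emph{Uniqueness.} Given $y \in Y^\sm$ and $a \in k(y)^\times$, use Bertini (and \cite{Po} when $k$ is finite) to produce a smooth integral curve $C \subset X$ through $y$, transversal to $Y$ at $y$, disjoint from $Y^\sing$, and meeting $Y$ only in $Y^\sm$-points $y_1,\dots,y_s$ transversally, so that $m_y = m_{y_i}=1$. By the Chinese remainder theorem in the semi-local regular ring $\sO_{C, C^\infty}$, choose $g \in \sO_{C, C^\infty \cup \{\eta\}}^\times$ with $g|_y = a$ and $g|_{y_i}=1$; then \eqref{eq.propcu} forces $\lambda_y(a) = -[\mathrm{div}(g)]$ in $h^S_0(X\setminus Y)$, independently of choices.

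\emph{Existence, reduction to smooth $C$.} For general admissible $(C, g)$, let $\nu\colon \tilde C \to C$ be the normalization and set $\tilde C^\infty := \nu^{-1}(C^\infty)_{\rm red}$, $\tilde g := \nu^* g$. Since $\nu$ is finite and birational, $\nu_*\mathrm{div}(\tilde g)=\mathrm{div}(g)$; for each $y \in C^\infty$ one has $m_y = \sum_{\tilde y \mapsto y}[k(\tilde y):k(y)] \cdot \tilde m_{\tilde y}$, and $\tilde g|_{\tilde y} = g|_y$ under $k(y) \hookrightarrow k(\tilde y)$. Functoriality of the Gysin connecting morphism along $\tilde y \to y$ implements the norm $N_{k(\tilde y)/k(y)}$ on $K_1$, so summation over preimages reduces the identity for $C$ to the same identity for the smooth curve $\tilde C$.

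\emph{Existence, smooth case.} Assuming $C$ smooth and writing $X' := X \setminus Y^\sing$, I apply Proposition~\ref{prop.Gycomp} on the open $C_y := C \setminus (C^\infty \setminus \{y\})$ to the square $\{y\} \subset C_y,\ Y^\sm \subset X'$, on which $f^*(Y^\sm)=m_y[y]$; each instance gives a commutative Gysin square with multiplier $m_y$. Gluing these (equivalently via the decomposition $M_{gm}(C^\infty) = \bigoplus_y M_{gm}(y)$), one assembles a morphism from the Gysin triangle of $C^\infty \subset C$ to that of $Y^\sm \subset X'$ whose middle arrow is the twisted pushforward $\sum_y m_y (i_y)_*$. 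The induced commuting square of connecting morphisms $\partial_C$ and $\partial_X$, combined with Proposition~\ref{prop.commuGy} applied to a smooth partial compactification of $C$ around $C^\infty$ (which identifies $\partial_C$ on the residues $g|_y$ with the class of $\mathrm{div}(g)$), yields the asserted identity in $h^S_0(X\setminus Y)$.

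\emph{Main obstacle.} The technical heart is reconciling the pointwise multiplicities $m_y$ with the Gysin formalism: Proposition~\ref{prop.Gycomp} requires a uniform multiplier, so one must work locally at each $y \in C^\infty$ and glue, verifying that the weighted middle map extends canonically to a morphism of distinguished triangles. A subordinate subtlety is that $C$ need not admit a smooth proper compactification inside $X$, which is circumvented by exploiting the Zariski-local nature of both sides of \eqref{eq.propcu} around $C^\infty$.
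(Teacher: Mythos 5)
Your proposal is correct and follows essentially the same strategy as the paper: uniqueness via a transversal Bertini curve, and existence by using Proposition~\ref{prop.Gycomp} to reduce to the one-dimensional situation, where Proposition~\ref{prop.commuGy} applies. The only difference is organizational: the paper applies Proposition~\ref{prop.Gycomp} once, taking $Z'_2$ to be the normalization of $C$, while you first pass to the normalization and then invoke Proposition~\ref{prop.Gycomp} Zariski-locally around each $y\in C^\infty$ so as to meet its single-multiplier hypothesis literally, a point the paper leaves implicit.
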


\begin{proof}
Uniqueness is clear:
take a curve $C$ intersecting $Y$ transversally and a rational function $g$ equal to~$1$
at all but one point $y$ of~$C^\infty$, this determines the map~$\lambda_y$.
To prove~\eqref{eq.propcu},
we first reduce to the case $\dim(X)=1$ by applying Proposition~\ref{prop.Gycomp} with $Z_1=Y^\sm$,
$Z_2=X\setminus Y^\sing$ and $Z'_2$  the normalization of~$C$.
The case $\dim(X)=1$ is a consequence of Proposition~\ref{prop.commuGy}.
\end{proof}

\section{Motivic interpretation of $\C(Y)$} \label{sec.motive}

For the category $DM_{gm}(k)$ we use the same notation as in Section~\ref{sec.gy}.
In this section $k$ is a perfect field,
$Y/k$ is a projective  snc variety of dimension $d$ and $\tau:Y\to \mathbb N$ is
the function locally counting  irreducible components. Let $p$ be the exponential
characteristic of $k$.

\smallskip 

The localization exact triangle 
\ga{}{
M_{gm}(Y^\sing) \to M_{gm}(Y) \to M_{gm}^c(Y^\sm) \to   M_{gm}(Y^\sing)[1] \notag
}
from~\cite[Prop.~4.1.5]{Voe00b}, \cite[Prop.~5.5.5]{Kel13} induces an exact sequence 
\ga{eqn:pres}{
 H^{2d-1,d}(Y^\sing)    \to h^S_0(Y^\sm) \to H^{2d,d}(Y) \to  0.
}
In fact $h^S_0(Y^\sm) =H^{2d,d}_c(Y^\sm)$ by duality.
 We remark that  Proposition~\ref{prop.calcmotco} implies that $H^{2d,d}(Y^\sing)=0$ and that $
H^{2d-1,d}(Y^\sing)$ is spanned by the images of the groups $k(y)^\times\otimes_\Z \Lambda$ with $\tau(y)=2$ via the
Gysin maps. Let $Y_1,\ldots , Y_r$ be the irreducible components of $Y$ and assume that
$y$ is contained in the two components $Y_1$ and $Y_2$. The composition $\lambda_y$ of the maps
\[
k(y)^\times \to H^{2d-1,d}(Y^\sing) \to h^S_0(Y^\sm) =\oplus_{i=1}^r h^S_0(Y_i\setminus
(Y_i\cap Y^\sing)) 
\]
is equal to the sum of the two maps defined in \eqref{eq.lamd} for the two components $Y_1$
and $Y_2$. This is clear by using functoriality along the map $Y_1\coprod Y_2 \to Y$. 
Thus, the exact sequence \eqref{eqn:pres} induces an exact sequence
\eq{}{
\bigoplus_{\tau(y)=2} k(y)^\times \otimes_\Z \Lambda \xrightarrow{\oplus_y \lambda_y}  h^S_0(Y^\sm) \to H^{2d,d}(Y) \to  0.
\notag }

By Lemma~\ref{lem.sus0}, we can identify the quotient of $\Zcyc_0(Y^\sm)_\Lambda$ by  the
sub-$\Lambda$-module generated by type 1 relations with
$h^S_0(Y^\sm)$ (see Section~\ref{sec.cg1}). Therefore, the cycle group
$\C(Y)$ has the analogous presentation
\eq{}{
\bigoplus_{(C_2,g_2)} \Lambda \xrightarrow{\rm div} h^S_0(Y^\sm) \to \C(Y) \to  0. \notag
}
The direct sum on the left runs over all type 2 data.

\smallskip

The canonical surjective map 
\[
\theta: \bigoplus_{(C_2,g_2)} \Lambda \to \bigoplus_{\tau(y)=2} k(y)^\times \otimes_\Z \Lambda
\]
which sends $1\in \Lambda_{(C_2,g_2)}$ to $g_2|_{C_2\cap Y^\sing}$ gives rise to the left
square in the
diagram
\eq{}{
\xymatrix{
\bigoplus_{(C_2,g_2)} \Lambda \ar[r]^{\rm div} \ar[d]_{\theta} & h^S_0(Y^\sm) \ar[r] \ar@{=}[d] &
\C(Y) \ar[r]  \ar@{-->}[d]^{\wr}_{\Gamma} & 0\\
\bigoplus_{\tau(y)=2} k(y)^\times \otimes_\Z \Lambda \ar[r]^-{\oplus_y \lambda_y}&
h^S_0(Y^\sm) \ar[r] & H^{2d,d}(Y) \ar[r] &  0\rlap{\text{.}}
} \notag
}
In fact, the left square commutes by Proposition~\ref{prop.lamde}, so there is a unique
isomorphism $\Gamma$ making the diagram commutative. In summary, we have shown

\begin{thm}\label{thm.motcomp}
Assume that the ring of coefficients~$\Lambda$ satisfies condition $(\dagger)$ from Section~\ref{sec.gy}.
For $Y/k$ a projective snc variety, there is a unique isomorphism $\Gamma$ making the
diagram
\[
\xymatrix{
\Zcyc_0(Y^\sm)\ar[d] \ar[rd] & \\
\C(Y) \ar[r]^-{\Gamma}  & H^{2d,d}(Y)
}
\]
commutative. The diagonal map is induced by the Gysin maps of closed points $y\in Y^\sm$ as in Proposition~\ref{prop.calcmotco}(ii)  for $j=d$.
\end{thm}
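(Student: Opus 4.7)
The plan is to compare two right-exact presentations of $\C(Y)$ and $H^{2d,d}(Y)$ which share the common middle term $h^S_0(Y^\sm)$, and then verify via the explicit characterization of boundary maps in $DM_{gm}(k)$ from Proposition~\ref{prop.lamde} that the two left-hand kernel descriptions are compatible.

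First, I would extract the presentation of $H^{2d,d}(Y)$ from the localization exact triangle
\[
M_{gm}(Y^\sing) \to M_{gm}(Y) \to M_{gm}^c(Y^\sm) \to M_{gm}(Y^\sing)[1].
\]
Applying $\Hom_{DM_{gm}(k)}(-,\Lambda(d)[2d])$ and using the duality $M_{gm}^c(Y^\sm)^* = M_{gm}(Y^\sm)(d)[2d]$, so that $H^{2d,d}_c(Y^\sm)=h^S_0(Y^\sm)$, produces the exact sequence
\[
H^{2d-1,d}(Y^\sing) \to h^S_0(Y^\sm) \to H^{2d,d}(Y) \to H^{2d,d}(Y^\sing),
\]
whose rightmost term vanishes by Proposition~\ref{prop.calcmotco}(i) applied to the $(d-1)$-dimensional variety $Y^\sing$. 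Proposition~\ref{prop.calcmotco}(ii), applied to a dense open subset of $(Y^\sing)^\sm$, identifies $H^{2d-1,d}(Y^\sing)$ as a quotient of $\bigoplus_{\tau(y)=2} k(y)^\times \otimes \Lambda$, and functoriality of the localization triangle along $Y_i\sqcup Y_j\to Y$ shows that the composition with the boundary into $h^S_0(Y^\sm)$ is precisely the sum of the two maps $\lambda_y$ of \eqref{eq.lamd} associated to the two branches through $y$.

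Second, I would combine this with Lemma~\ref{lem.sus0}, which presents $h^S_0(Y^\sm)$ as the quotient of $\Zcyc_0(Y^\sm)_\Lambda$ by the type~1 relations of Section~\ref{sec.cg1}. Thus both $\C(Y)$ and $H^{2d,d}(Y)$ arise as quotients of $h^S_0(Y^\sm)$, the former by divisors of type~2 data $(C_2,g_2)$ and the latter by the images under $\oplus_y\lambda_y$ of $\bigoplus_{\tau(y)=2}k(y)^\times\otimes\Lambda$. I would then introduce the map
\[
\theta : \bigoplus_{(C_2,g_2)} \Lambda \to \bigoplus_{\tau(y)=2} k(y)^\times \otimes \Lambda, \qquad 1_{(C_2,g_2)} \mapsto g_2|_{C_2\cap Y^\sing},
\]
which is surjective because for each $y$ with $\tau(y)=2$ and each unit $u\in k(y)^\times$ one can produce, by a Bertini argument (as in the proof of Proposition~\ref{prop.calcmotco}), a snc subcurve through $y$ carrying a rational function realizing $u$ at $y$ and being $1$ at the other intersections.

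Finally, the crucial compatibility is that, for every type~2 datum, the image of $\mathrm{div}(g_2)$ in $h^S_0(Y^\sm)$ coincides with $(\oplus_y\lambda_y)(\theta(C_2,g_2))$. Here I would apply Proposition~\ref{prop.lamde} to each irreducible component of $C_2$ regarded as a curve inside its ambient smooth component $Y_i$, with $Y_i\cap Y^\sing$ playing the role of the snc divisor; the proposition then produces exactly the desired identity branch by branch, and summing over the branches yields the asserted equality. This gives a commutative diagram of right-exact sequences, and a diagram chase (five lemma, since the middle vertical arrow is the identity and the left vertical is surjective) produces the unique isomorphism $\Gamma$ and verifies its compatibility with the diagonal Gysin map from $\Zcyc_0(Y^\sm)$. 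The main obstacle is the verification of this last compatibility: it is the point where the purely divisorial definition of $\C(Y)$ must be translated into a motivic boundary computation, and it relies entirely on Proposition~\ref{prop.lamde}, which is why the preceding section on the Gysin homomorphism is indispensable.
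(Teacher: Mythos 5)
Your proof follows the same route as the paper's: compare the right-exact presentation of $H^{2d,d}(Y)$ coming from the localization triangle for $Y^\sing\subset Y$ with the presentation of $\C(Y)$ via Lemma~\ref{lem.sus0}, using $\theta$ as the connecting map and Proposition~\ref{prop.lamde} for the commutativity of the left square. Your additional remarks (surjectivity of $\theta$ via Bertini, the branch-by-branch application of Proposition~\ref{prop.lamde} inside each smooth component $Y_i$) simply flesh out steps the paper leaves implicit.
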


\begin{rmk}\label{rmk.fico}
Theorem~\ref{thm.motcomp} remains true for $\Lambda=\Z/m\Z$ with $m$ prime to~$p$. To see
this, simply tensor the diagram taken with $\Lambda=\Z[1/p]$ coefficients  with $\Z/m\Z$
and use cohomological dimension to show that
\[H^{2d,d}(Y,\Z[1/p]) \otimes_{\Z[1/p]} \Z/m\Z \xrightarrow{\sim}  H^{2d,d}(Y,\Z/m\Z)
\]
is an isomorphism.
\end{rmk}

\begin{rmk} \label{rmk:GammaforU}
The natural homomorphism $\Zcyc_0(U)\to \sC(Y)$ is surjective
for any dense Zariski open subset $U\subset Y^{\rm sm}$,
as can be seen by moving zero-cycles on $Y^{\rm sm}$ by type 1 relations.  
\end{rmk}

We expect that the condition of projectivity in Theorem~\ref{thm.motcomp} can be weakened
to quasi-projectivity. 

\section{Proof of Theorem~\ref{thm.res}}  \label{sec:proof}

This section is devoted to the proof of Theorem~\ref{thm.res}, which we subdivide
according to the extra assumptions. All constructions are based on the isomorphism $\C(Y)
\cong H^{2d,d}(Y)$ of Theorem~\ref{thm.motcomp}. 

\subsection{Proof of Theorem~\ref{thm.res} (i) and (ii)}
The assumption is that the  (perfect) residue field $k$ is separably  closed  (thus algebraically closed), in (i), or  finite, in (ii).

By
Corollary~\ref{cor:isoclosed}, the \'etale
cycle map $ \C (Y)\xrightarrow{\sim} H^{2d}(Y_\et , \Lambda (d))$ is an isomorphism.
Define $\rho$ such that the diagram
\[
\xymatrix{
\CH_1(X)_\Lambda \ar[r] \ar@{-->}[d]_{\rho}  & H^{2d}(X_\et , \Lambda (d)) \ar[d]^{\wr}\\
\C(Y)  \ar[r]^-{\sim} & H^{2d}(Y_\et , \Lambda (d))
}
\]
is commutative. Here the upper horizontal map is the \'etale cycle map on $X$.

\subsection{Proof of Theorem~\ref{thm.res} (iv)}

Let $\Lambda=\Z/m\Z$ with $m$ prime to $p$.
Recall that the Milnor $K$-sheaf with $\Lambda$\nobreakdash-coefficients is defined as the
Nisnevich sheafification of the presheaf on affine schemes $\Spec (R)$ given as follows: take the quotient of
the tensor algebra \[ R\mapsto \Lambda \otimes_\Z \bigoplus_{i\in \mathbb N}
\underbrace{R^\times \otimes_\Z \cdots \otimes_\Z R^\times 
}_{i \text{ times}} \]
by the two-sided ideal generated by the elements $a\otimes (1-a)$ with $a,1-a\in R^\times$.

 Rigidity
for Milnor $K$-theory is the statement that the stalk of the  Milnor $K$-sheaf in the Nisnevich
topology at a point
$y\in Y$ satisfies \[(\sK^M_{Y,j})_y\xrightarrow{\sim} K^M_j(k(y)).\]
The same rigidity property holds for motivic cohomology. More precisely, consider the
morphism of sites
$\epsilon:Y_{\rm cdh} \to Y_{\rm Nis}$
and the motivic complex in the Nisnevich topology defined as $\Lambda(j)_{Y,\rm Nis}=R
\epsilon_* \Lambda(j)_Y$.
Then, for $y\in Y$, the map
$\sH^i(\Lambda(j)_{Y,\rm Nis})_y \to   H^{i,j}(y)$
is an isomorphism. 
For smooth $Y$, this follows from \cite[Cor.\ 0.4]{HY}.
For a general snc
variety $Y$ with irreducible components $Y_1,\ldots, Y_r$,
one reduces the statement to the smooth case by means of the descent spectral sequence of the cdh-covering \[\coprod_{i=1}^r Y_i \to
Y. \]
Thus, using the isomorphism between Milnor $K$-theory and motivic cohomology for
fields~\cite[Lec.\ 5]{MVW}, we obtain
\begin{prop}
For $\Lambda=\Z/m\Z$ with $m$ prime to $p$, there is a canonical isomorphism of Nisnevich
sheaves
\[
S:\sK^M_{Y,j} \xrightarrow{\sim } \sH^j(\Lambda(j)_{Y,\rm Nis}).
\]
For $i>j$, the sheaf $\sH^i(\Lambda(j)_{Y,\rm Nis})$ vanishes.
\end{prop}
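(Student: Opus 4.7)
The plan is to construct $S$ at the presheaf level using the Nesterenko--Suslin--Totaro comparison, then verify it is an isomorphism after Nisnevich sheafification by looking at stalks, and finally deduce the vanishing from the same stalk computation.

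First, I would construct the presheaf map. On any regular affine scheme $\Spec(R)$, motivic cohomology carries a graded-commutative ring structure together with an identification $H^{1,1}(R) \cong R^\times \otimes_\Z \Lambda$ (see \cite[Lec.~4]{MVW}). Multiplication in $\bigoplus_j H^{j,j}(R)$ yields a morphism from the tensor algebra on $R^\times$ (tensored with $\Lambda$) into motivic cohomology, and the Steinberg relation $\{a,1-a\}=0$ is automatically satisfied in $H^{2,2}(R)$ (see \cite[Lec.~5]{MVW}). Hence this morphism factors through $K^M_j(R)\otimes\Lambda$. Running over affine opens in $Y$ and Nisnevich-sheafifying produces a canonical morphism $S\colon \sK^M_{Y,j}\to \sH^j(\Lambda(j)_{Y,\rm Nis})$.

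Second, I would check that $S$ is an isomorphism by passing to stalks. For $y \in Y$, the rigidity statements just established in the excerpt give
\[
(\sK^M_{Y,j})_y \xrightarrow{\sim} K^M_j(k(y)) \quad \text{and} \quad \sH^j(\Lambda(j)_{Y,\rm Nis})_y \xrightarrow{\sim} H^{j,j}(k(y)).
\]
Under these identifications, and by naturality of the constructions, the stalk map $S_y$ becomes the Nesterenko--Suslin--Totaro comparison $K^M_j(k(y)) \xrightarrow{\sim} H^{j,j}(k(y))$, which is an isomorphism by \cite[Lec.~5]{MVW}. Therefore $S$ is an isomorphism of Nisnevich sheaves. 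For the vanishing of $\sH^i(\Lambda(j)_{Y,\rm Nis})$ when $i>j$, the same rigidity yields $\sH^i(\Lambda(j)_{Y,\rm Nis})_y \cong H^{i,j}(k(y))$, and this group is zero because the motivic complex $\Lambda(j)$ over a field is concentrated in cohomological degrees $\leq j$.

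The substantive work has already been carried out in the preceding paragraphs, where the rigidity for $\sH^i(\Lambda(j)_{Y,\rm Nis})$ is derived from the smooth case of \cite[Cor.~0.4]{HY} via the descent spectral sequence for the cdh-cover $\coprod_i Y_i \to Y$; given this, the proposition is a formal consequence. The only delicate point is ensuring that the globally defined $S$ specializes on stalks to the Nesterenko--Suslin--Totaro map, and this is immediate from the naturality of both the motivic ring structure and the rigidity isomorphisms. I do not foresee any genuine obstacle.
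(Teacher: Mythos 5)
Your proposal matches the paper's approach: both arguments reduce to stalks via the rigidity isomorphisms $(\sK^M_{Y,j})_y \cong K^M_j(k(y))$ and $\sH^i(\Lambda(j)_{Y,\rm Nis})_y \cong H^{i,j}(y)$ (the latter via \cite[Cor.~0.4]{HY} for smooth varieties plus the cdh-descent spectral sequence for the covering $\coprod Y_i \to Y$), and then invoke the Nesterenko--Suslin--Totaro comparison $K^M_j(F) \cong H^{j,j}(F)$ for fields together with the degree bound $\Lambda(j) \in D^{\le j}$. The paper treats the global construction of $S$ as implicit, whereas you spell it out via the ring structure on motivic cohomology; this is a useful addition. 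One small caveat: you phrase the construction as taking place on \emph{regular} affine opens, but the snc variety $Y$ is not regular along $Y^{\sing}$, so you should instead note that the product structure on $\Lambda(j)_{\rm cdh}$ and the identification of $\sH^1(\Lambda(1)_{\rm Nis})$ with $\G_m\otimes\Lambda$ are available on the full Nisnevich site of $Y$ (for instance because $Y$, being snc, is seminormal), so that the presheaf map through the Steinberg quotient is defined on all opens before sheafifying. With that adjustment the argument is complete and agrees with the paper.
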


Combining this proposition with Theorem~\ref{thm.motcomp} and cohomological dimension, we see that there is an isomorphism
\eq{eq.isomil}{
\C(Y) \xrightarrow{\sim} H^d(Y_{\Nis} ,\sK^M_{Y,d}).
}

As $A$ is equicharacteristic, the Gersten conjecture for Milnor $K$-theory \cite{Ker09}
implies that there is a canonical isomorphism  \[\CH_1(X)_\Lambda \xrightarrow{\sim} H^{d}(X_\Nis,\sK^M_{X,d}  ). \]
Define $\rho$ such that the diagram
\[
\xymatrix{
\CH_1(X)_\Lambda \ar[r]^-{\sim} \ar@{-->}[d]_{\rho}  &  H^{d}(X_\Nis,\sK^M_{X,d}  ) \ar[d]\\
\C(Y)  \ar[r]^-{\sim} &  H^{d}(Y_\Nis,\sK^M_{Y,d}  )
}
\]
is commutative. The right vertical arrow is the restriction homomorphism.

 \subsection{Proof of Theorem~\ref{thm.res} (iii)}
In view of Remark~\ref{rmk.reiii}, we can assume that $\Lambda=\Z[1/(p (d-1)!)]$.
As $X$ is regular, its $K$-theory of vector bundles $K_0(X)$ is the same as its
$K$-theory of coherent sheaves
$K'_0(X)$ \cite[Sec.~2.3.2]{Gil05}.  Let $F_0\subset K'_0(X)$ be the subgroup spanned by
classes of coherent sheaves supported in dimension $0$.

\begin{claim}
We have $F_0=0\subset K'_0(X)$.
\end{claim}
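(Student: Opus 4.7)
The plan is, by dévissage on finite-length coherent sheaves supported at closed points, to reduce to showing that $[k(y)]=0$ in $K'_0(X)$ for every closed point $y$ of $X$; every such $y$ lies in $Y$. For each such $y$ I would construct a one-dimensional integral closed subscheme $Z\subset X$ through $y$, finite flat over~$A$, whose ring of global sections $B:=\mathcal{O}_Z(Z)$ is a discrete valuation ring with residue field $k(y)$. Granting this, if $\tau\in B$ is a uniformizer, then the short exact sequence of $B$-modules
\[
0\longrightarrow B\xrightarrow{\;\cdot\,\tau\;}B\longrightarrow k(y)\longrightarrow 0
\]
pushes forward along the closed immersion $\iota\colon Z\hookrightarrow X$ to a short exact sequence of coherent $\mathcal{O}_X$-modules whose two outer terms are the same sheaf $\iota_*B$, whence $[k(y)]=[\iota_*B]-[\iota_*B]=0$ in $K'_0(X)$.

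To construct $Z$, I would use the local description of Lemma~\ref{prop:sncv} (or its variant adapted to $y\in Y$), presenting the henselized local ring $\mathcal{O}_{X,y}^h$ as $R_0[y_1,\dots,y_{d+1}]^h/(y_1^{m_1}\cdots y_r^{m_r}-\pi u)$ for some $r\ge 1$ and unit~$u$. Cutting by the regular sequence of $d$ elements $y_2-y_1,\dots,y_r-y_1,y_{r+1},\dots,y_{d+1}$ collapses the local structure to $R_0[y_1]^h/(y_1^n-\pi u')$ with $n=m_1+\cdots+m_r$ and $u'$ a unit, which is manifestly a DVR with uniformizer~$y_1$ (note $\pi\in(y_1)$) and residue field~$k(y)$. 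Descending this cut-out to an étale neighborhood of~$y$ in~$X$ and taking the closure of the irreducible component through~$y$ produces $Z\subset X$; projectivity of $X/A$ combined with zero-dimensionality of both fibres of $Z\to\Spec A$ yields finiteness of $Z/A$, and flatness follows from $Z$ being integral and dominating~$\Spec A$. Henselianness of~$A$ then forces the finite integral $A$-algebra $B$ to be local (any finite $A$-algebra is a product of henselian local rings, so a domain must be one such factor), and the henselian computation above identifies $B$ with the DVR $R_0[y_1]^h/(y_1^n-\pi u')$.

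The main obstacle is handling closed points $y\in Y^{\mathrm{sing}}$ (so $r\ge 2$), where the naive lifting of Section~\ref{sec:lift} is unavailable: no component $Y_j$ through~$y$ can be extended to an integral curve in~$X$ flat over~$A$, since $V(y_j)$ is contained in~$Y$. The diagonal substitution $y_i=y_1$ that cuts across all $r$ components at once is the decisive device: it simultaneously produces flatness over~$A$ (through the identity $y_1^n=\pi u'$) and a principal maximal ideal at~$y$, both indispensable for the short exact sequence argument to yield $[k(y)]=0$ rather than merely a multiple $n\cdot[k(y)]=0$.
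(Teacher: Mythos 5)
Your argument follows the same strategy as the paper's: dévissage to skyscraper sheaves, then exhibit an integral one‑dimensional closed subscheme $Z\subset X$ through $y$, finite and flat over $A$, whose coordinate ring is a discrete valuation ring with residue field $k(y)$, and invoke the short exact sequence $0\to B\xrightarrow{\tau}B\to k(y)\to0$. The paper simply asserts the existence of such a $Z$; your proposal tries to produce it explicitly from the henselian local model of Lemma~\ref{prop:sncv}, which is extra (and commendable) work — but there is a genuine gap in the descent step.

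The issue is the passage from $\sO_{X,y}^h$ back to $X$. Your diagonal cut $(y_2-y_1,\dots,y_r-y_1,y_{r+1},\dots,y_{d+1})$ lives in $\sO_{X,y}^h$, and you propose to ``descend this cut‑out to an étale neighborhood $V$ of $y$'' and take the image of the resulting curve in $X$. But the prime $(t_1,\dots,t_d)\subset\sO_{X,y}^h$ defining your DVR need not be the extension of a prime of $\sO_{X,y}$: in the henselization a prime $\mathfrak p\subset\sO_{X,y}$ may split into several conjugate primes, of which $(t_1,\dots,t_d)$ is only one. In that case, writing $\mathfrak p=(t_1,\dots,t_d)\cap\sO_{X,y}$ for the prime defining $Z=\overline{f(W_0)}\subset X$, one has $\mathfrak p\,\sO_{X,y}^h\subsetneq(t_1,\dots,t_d)$, so $\sO_{Z,y}^h=\sO_{X,y}^h/\mathfrak p\,\sO_{X,y}^h$ is not the DVR you cut out, has several minimal primes, and $\sO_{Z,y}$ is not regular. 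The unramified map $W_0\to Z$ is then a normalization, not an isomorphism, and the exact‑sequence argument fails because $B=\sO(Z)$ is not a DVR. Your sentence ``the henselian computation above identifies $B$ with the DVR'' is precisely the point that does not follow.

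The fix is both simpler and more robust, and makes no use of the snc structure at all: work directly in $\sO_{X,y}$. Since $\sO_{X,y}$ is a regular local ring of dimension $d+1$ and $0\neq\pi\in\mathfrak m_y$, apply prime avoidance to $\mathfrak m_y^2$ together with the finitely many minimal primes of $(\pi)$ (the generalized prime avoidance lemma permits one non‑prime ideal, so this works over any residue field, finite or not) to choose $t_1\in\mathfrak m_y$ with $t_1\notin\mathfrak m_y^2$ and $t_1$ not in any minimal prime of $(\pi)$. Then $\sO_{X,y}/(t_1)$ is regular of dimension $d$ with $\pi\neq0$; iterate to get $t_1,\dots,t_d$ forming part of a regular system of parameters with $\pi\notin(t_1,\dots,t_d)$. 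Now $\sO_{X,y}/(t_1,\dots,t_d)$ is a DVR in which $\pi\neq0$, and the closure $Z\subset X$ of its generic point is integral, one‑dimensional, flat over $A$, hence finite over $A$ (being also projective), hence $Z=\Spec B$ with $B$ local by henselianness of $A$, and $B=\sO_{Z,y}=\sO_{X,y}/(t_1,\dots,t_d)$ is the required DVR with residue field $k(y)$. The remaining steps of your proposal (finiteness, flatness, locality, the exact sequence) are all correct.
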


\begin{proof}
By linearity, it is enough to see that the class of the skyscraper sheaf $i_{x,*} k(x)$ in
$K'_0(X)$ is zero, where $x$ is a  closed point of $Y$ and $i_x:x\to X$ is the
immersion.  For simplicity of notation, we write $k(x)$ for this  coherent sheaf of $\sO_X$-modules.
 Choose a regular connected  closed subscheme $Z\subset X$ of dimension~$1$ with
$x\in Z$ and $Z/A$ flat, $Z=\Spec R$. Let $\pi_Z\in R$ be a prime element. The short
exact sequence of coherent sheaves of $\sO_X$-modules
\[
0\to \sO_Z  \xrightarrow{ \pi_Z} \sO_Z \to k(x) \to 0
\]  
implies $[k(x)]=0\in K'_0(X)$.
\end{proof}

Let $\alpha$ be the composite homomorphism \[\CH_1(X)\to
K'_0(X)/F_0=K_0'(X)\xleftarrow{\sim} K_0(X) \]
 where the first map assigns to the cycle class of 
an integral curve the class of its structure sheaf \cite[Thm.~34]{Gil05}. 
  As $K$-theory is contravariant, one has a restriction homomorphism $K_0(X)\to K_0(Y)$.  
  
 \smallskip
 
From the Grothendieck construction based on the projective bundle formula, one obtains a
Chern class map $c_d: K_0(Y)\to  H^{2d,d}(Y)$, see \cite[Sec.\ V.11]{W13}. 
Let $\rho'$ be the  map
\eq{eq.defrhok}{
 \CH_1(X) \xrightarrow{\alpha} K_0(X)\to K_0(Y)\xrightarrow{c_d}  H^{2d,d}(Y)\xrightarrow{\sim}  \C(Y).
}

The following claim finishes the proof of Theorem~\ref{thm.res}(iii).
  
\begin{claim} \label{claim:cd}
The map \eq{eq.clreK}{ \Zcyc^g_1(X)\to \CH_1(X) \xrightarrow{\rho'} \C(Y)}  is additive and is  $(-1)^{d-1}(d-1)!$ times the map induced by the restriction homomorphism $\tilde \rho: \Zcyc_1^g(X) \to \Zcyc_0(Y^\sm)$.
\end{claim}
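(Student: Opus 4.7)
The plan is to evaluate \eqref{eq.clreK} on a generator $[Z]\in \Zcyc_1^g(X)$, where $Z\subset X$ is an integral one-dimensional closed subscheme, flat over $A$ and disjoint from $Y^\sing$, and to reduce the computation to a local Riemann--Roch identity at each point of $Z\cap Y$. First, $\alpha([Z])=[\sO_Z]\in K_0(X)=K'_0(X)$. Since the Cartier divisor $Y$ is cut out on the regular scheme $X$ by a uniformizer $\pi$ of $A$, and $Z$ is $A$-flat, $\pi$ is a non-zero-divisor on $\sO_Z$; consequently the derived restriction $\sO_Z\otimes^{\mathbf{L}}_{\sO_X}\sO_Y$ coincides with the ordinary restriction $\sO_{Z\cap Y}$. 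Choosing a locally free resolution of $\sO_Z$ on $X$ thus represents the image of $[\sO_Z]$ in $K_0(Y)$ by a complex whose class in $K'_0(Y)$ equals $[\sO_W]$, where $W:=Z\cap Y$ is a zero-dimensional closed subscheme of~$Y^\sm$.

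The core step is a local Chern class calculation: for each reduced closed point $y\in Y^\sm$, I would show that $c_i([k(y)])=0$ in $H^{2i,i}(Y)$ for $i<d$ and $c_d([k(y)])=(-1)^{d-1}(d-1)!\,\iota_y(1)$ in $H^{2d,d}(Y)$, where $\iota_y$ is the Gysin map of Proposition~\ref{prop.calcmotco}. The inclusion $\{y\}\hookrightarrow Y$ is a regular embedding of codimension~$d$ locally on the smooth locus, so $k(y)$ admits a finite Koszul resolution by locally free $\sO_Y$-modules in a Zariski neighbourhood of~$y$. The Riemann--Roch formula for this closed immersion, combined with the triviality of the Todd class of the normal bundle at a point, gives $\ch([k(y)])=\iota_y(1)$, concentrated in bidegree $(2d,d)$; in other words $\ch_i([k(y)])=0$ for $i<d$ and $\ch_d([k(y)])=\iota_y(1)$. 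Newton's identity $p_d=(-1)^{d-1}d\,e_d$, valid when $e_1=\cdots=e_{d-1}=0$, then transforms this into $c_d([k(y)])=(-1)^{d-1}(d-1)!\,\ch_d([k(y)])$, as required; note that $(d-1)!$ is invertible in~$\Lambda$.

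Finally, a composition series of $\sO_W$ yields $[\sO_W]=\sum_y m_y[k(y)]$ in $K_0(Y)$, where $m_y=\mathrm{length}_{\sO_{W,y}}$ is the intersection multiplicity of $Z$ and $Y$ at~$y$, so $\tilde\rho([Z])=\sum_y m_y[y]$ in $\Zcyc_0(Y^\sm)$. Since the lower Chern classes of each $[k(y)]$ vanish, the Whitney sum formula forces all mixed terms in codimension~$d$ to vanish, so
\[
c_d([\sO_W])=\sum_y m_y\,c_d([k(y)])=(-1)^{d-1}(d-1)!\sum_y m_y\,\iota_y(1) \ \in H^{2d,d}(Y).
\]
Under the isomorphism $\Gamma:\C(Y)\xrightarrow{\sim}H^{2d,d}(Y)$ of Theorem~\ref{thm.motcomp}, whose diagonal is by construction the map induced by the Gysin morphisms $\iota_y$, this identifies the image of $[Z]$ under \eqref{eq.clreK} with $(-1)^{d-1}(d-1)!\,\tilde\rho([Z])\in \C(Y)$; the additivity assertion then follows from that of $\tilde\rho$. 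The main obstacle is the local Chern class computation: one must verify both that the Koszul-resolved $[k(y)]$ has vanishing Chern classes below codimension~$d$ (the true content of the Riemann--Roch formula for a codimension-$d$ regular embedding) and that the top Chern class matches the motivic cycle class $\iota_y(1)$ with the correct combinatorial factor, which relies on working in the smooth locus $Y^\sm$ where the Koszul complex is available.
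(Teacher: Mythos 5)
Your proposal follows essentially the same strategy as the paper's proof (which extends an argument of Biswas--Srinivas): reduce to the Chern classes of skyscraper sheaves $[k(y)]$, prove vanishing of $c_i$ for $i<d$ so that the Whitney sum formula collapses to additivity of $c_d$, and compute $c_d([k(y)])=(-1)^{d-1}(d-1)!\,\iota_y(1)$ via Grothendieck--Riemann--Roch. Your Newton-identity unpacking of the factor $(-1)^{d-1}(d-1)!$ is exactly the content behind the paper's citation of \cite[Sec.\ 15.3]{Ful84}, and your observation that $[\sO_Z]|_Y=[\sO_{Z\cap Y}]$ in $K_0(Y)$ because $\pi$ is a non-zero-divisor on $\sO_Z$ is the same as the paper's use of \cite[p.~255]{Gil05}.

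Where you leave a genuine gap is in pinning down \emph{where} the Chern classes live. You flag this yourself as ``the main obstacle,'' but it deserves to be made precise. The Koszul resolution of $k(y)$ is available only on a smooth Zariski neighbourhood $V \subset Y^\sm$ of $y$, and the motivic GRR formula you want to invoke is a statement about closed immersions of smooth schemes; so what you actually compute is $c_i([k(y)])|_V$. To promote this to a statement in $H^{2i,i}(Y)$ one must argue that the restriction $H^{2i,i}(Y) \to H^{2i,i}(V)$ does not lose information; this is not automatic since $Y\setminus V$ contains the entire singular locus. The paper handles this in two stages, neither of which appears in your sketch: for $i<d$ it chooses a \emph{different} open $U$ whose complement is just the finite set $\mathrm{supp}(w)\subset Y^\sm$, and uses the Gysin exact sequence $0=H^{2i-2d,i-d}(Y\setminus U)\to H^{2i,i}(Y)\to H^{2i,i}(U)$ (vanishing because $i-d<0$) to get injectivity; for $i=d$ it shows first, via the localization sequence in $K$-theory and the commuting square with $c_d$, that the class $\gamma$ dies on $Y\setminus\{y\}$ and hence lies in the image of $\mathrm{Gy}_y:\Lambda\to H^{2d,d}(Y)$, and only then restricts to the smooth component $Y_1\ni y$ to identify the Gysin coefficient via GRR, relying also on injectivity of $\mathrm{Gy}_y$ (checked by post-composing with the degree map). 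Without these two localization steps, the equalities $c_i([k(y)])=0$ and $c_d([k(y)])=(-1)^{d-1}(d-1)!\,\iota_y(1)$ in $H^{2i,i}(Y)$ rather than on $V$ do not follow from what you have written.
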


Claim~\ref{claim:cd}  implies
that the map $\rho$ defined as $\rho'$ tensored by $\Lambda$ and divided by
$(-1)^{d-1}(d-1)!$ satisfies the properties required by Theorem~\ref{thm.res}.

\begin{proof}[Proof of Claim~\ref{claim:cd}]
We extend the argument of \cite[proof of Prop.~2]{BS98}.
First, we show additivity of the composite map \eqref{eq.clreK}. Then we prove the claim
for an integral cycle in $\Zcyc_1^g(X)$.

\smallskip

Let $F_0^\sm \subset K_0(Y)$ be the subgroup generated by the classes $[k(x)]$ with $x\in
Y^\sm$ of dimension $0$. Note that the coherent sheaf $k(x)$ of $\sO_Y$-modules has finite
projective dimension, so the element $[k(x)]$ is well-defined in $K_0(X)$. We prove that $c_i(w)$ vanishes in $H^{2i,i}(Y)$ for $w\in
F_0^\sm$ and $i<d$. Indeed,
there is an open subscheme $U\subset Y$ such that $Y\setminus U$ consists of finitely many
closed points in $Y^\sm$ and such that $w|_U=0$. Now the Gysin exact sequence 
\[
0= H^{2i-2d,i-d}(Y\setminus U)  \to H^{2i,i}(Y)\to H^{2i,i}(U)
\]
and the vanishing of $c_i(w)|_{U}$ imply the vanishing of $c_i(w)$. 
It follows, by the Whitney sum formula, that $c_d|_{F_0^\sm}$ is additive.
As
the image of $\Zcyc_1^g(X)$ in $K_0(Y)$ is contained in $F_0^\sm$, the composite map
\eqref{eq.clreK} is additive as well.

\smallskip

Now we prove the claim for
$Z$ an integral cycle in $\Zcyc^g_1(X)$.
Let $y$ be the point  $(Z\cap Y)_{\rm red} $ and let $n$ be the intersection
multiplicity of $Z$ and $Y$. Let~$Y_1$ be the irreducible component of $Y$ containing $y$.

The commutative diagram with exact columns (Gysin sequence; note that
$H_y^{2d,d}(Y)=H_y^{2d,d}(Y_1)=\Lambda$ as $y$ is a non-singular point)
\ga{}{\xymatrix{ 
  &  & \Lambda \ar[d]_{{\rm Gy}_y}    \ar@{=}[r]   & \Lambda \ar[d]_{{\rm Gy}_y}\\
K_0(X) \ar[d] \ar[r] &
   K_0(Y) \ar[d] \ar[r]^-{c_d} &   H^{2d,d}(Y) \ar[d] \ar[r] & H^{2d,d}(Y_1) \ar[d] \\
 K_0(X\setminus Z)  \ar[r] & K_0(Y\setminus \{y\}) \ar[r]^-{c_d} &  H^{2d,d}(Y\setminus
\{y\}) \ar[r] &  H^{2d,d}(Y_1\setminus \{y\}) 
} \notag}
shows that the image $\gamma$ of $Z\in \Zcyc^g_1(X)$ in $H^{2d,d}(Y)$ satisfies $\gamma|_{Y\setminus \{y\}}=0$. 
Thus $\gamma={\rm Gy}_y(n')$ for a unique $n'\in \Lambda$. 
Note that the Gysin maps in the above diagram are injective. In fact, the composition of the
Gysin map on the right with the  degree map
$H^{2d,d}(Y_1)\cong \CH_0(Y_1) \xrightarrow{\rm deg} \Lambda$ is multiplication by
$[k(y):k]$, which is injective (recall that $\Lambda=\Z[1/(p (d-1)!)]$).

In the final step, we give a different description of $\gamma|_{Y_1}$.
The image   of $Z$ in $K_0(Y_1)$ is $n [k(y)]$ by \cite[p.~255]{Gil05}.
We have a commutative diagram
\[
\xymatrix{ \ar[d] K_0(Y) \ar[r]^-{c_d} &\ar[d] H^{2d,d}(Y)\\
 K_0(Y_1) \ar[r]^-{c_d} & H^{2d,d}(Y_1)\cong \CH_0(Y_1)\rlap{\text{.}}
 }
\]
 By the non-singular
Grothendieck--Riemann--Roch theorem \cite[Sec.\ 15.3]{Ful84} the bottom Chern class satisfies  \[c_d([k(y)]) = {\rm
  Gy}_y((-1)^{d-1} (d-1)!) \] in $H^{2d,d}(Y_1)$. 
As $n c_d([k(y)])=\gamma|_{Y_1}$ we get $n'=n (-1)^{d-1} (d-1)!$.
\end{proof}

\section{Special fields}

Let $Y/k$ be a proper snc variety of dimension $d$ and let $\Lambda =\Z/m\Z $ with $m$
prime to $p$.
In this section, we calculate the group $\C(Y)$ in terms of \'etale cohomology when $k$ is finite or algebraically
closed. This calculation shows that for these special fields, our main result, Theorem~\ref{thm.main}, is equivalent to
results of Saito--Sato and Bloch \cite{SS10}, \cite[App.]{EW13}.

\smallskip

We write $H_\et^{i,j}(Y)$ for the \'etale cohomology group $H^i(Y_\et , \Lambda (j))$.
Recall that there is a functorial \'etale realization homomorphism \[R_\et^{i,j}: H^{i,j }(Y)
\to H_\et^{i,j}(Y). \]
For smooth varieties, it is discussed in \cite[Thm.\
10.3]{MVW}.  For snc varieties, one can define it via the smooth Cech simplicial scheme
corresponding to the covering of $Y$ by its irreducible components.  

\begin{prop}\label{prop.compet}  Let $k$ be a finite field or an algebraically closed field
  and let  $\Lambda =\Z/m\Z $ with $m$
prime to $p$.
For a snc variety $Y/k$ and
 $s\ge d$, $i\in \Z$,  the realization map $R^{i,s}_\et$ is an isomorphism, except possibly
when $k$
is finite, $i=2d+1$ and  $s=d$.
\end{prop}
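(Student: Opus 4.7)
The plan is to reduce via cdh-descent from $Y$ to its smooth strata and handle each smooth stratum by combining Beilinson--Lichtenbaum with cohomological dimension bounds, invoking the Kato conjecture to control the remaining obstruction. Concretely, since $Y$ is snc, the morphism $\coprod_i Y_i \to Y$ from the disjoint union of smooth irreducible components is a cdh cover, and its \v{C}ech nerve $Y^{[\bullet]}$ satisfies $Y^{[p]} = \coprod_{|I|=p+1} \bigcap_{i\in I} Y_i$, a disjoint union of smooth varieties of dimension exactly $d-p$ (or empty). The induced descent spectral sequences
\[
E_1^{p,q} = H^{q,s}(Y^{[p]}) \Longrightarrow H^{p+q,s}(Y), \qquad E_{1,\et}^{p,q} = H^q_\et(Y^{[p]}, \mu_m^{\otimes s}) \Longrightarrow H^{p+q}_\et(Y, \mu_m^{\otimes s})
\]
are compared termwise via the realization maps on each $Y^{[p]}$.

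For a smooth $Z$ of dimension $d'$, the Beilinson--Lichtenbaum theorem yields $\Lambda(s) \simeq \tau_{\leq s} R\pi_* \mu_m^{\otimes s}$ in $D(Z_\Nis)$, where $\pi : Z_\et \to Z_\Nis$; the Nisnevich sheaves $R^q \pi_* \mu_m^{\otimes s}$ have stalks given by Galois cohomology of residue fields and vanish as soon as $q$ exceeds $d' + \mathrm{cd}_m(k)$, with $\mathrm{cd}_m(k) = 0$ when $k$ is algebraically closed and $\mathrm{cd}_m(k) = 1$ when $k$ is finite. Whenever $s \geq d' + \mathrm{cd}_m(k)$, the truncation $\tau_{\leq s} R\pi_* = R\pi_*$ is trivial and the realization $R^{i,s}_\et$ is an isomorphism in every degree $i$. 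This settles the terms $Y^{[p]}$ with $p \geq 1$ (for which $\dim Y^{[p]} \leq d-1$, so $s \geq d$ suffices in both cases), together with the $p=0$ stratum $\coprod Y_i$ either when $k$ is algebraically closed, or when $k$ is finite and $s \geq d+1$.

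The delicate case is $k$ finite, $s=d$, on the $p=0$ stratum. Here the cofiber of $\tau_{\leq d} R\pi_* \mu_m^{\otimes d} \to R\pi_* \mu_m^{\otimes d}$ is the shifted sheaf $\sH[-(d+1)]$ with $\sH = R^{d+1}\pi_* \mu_m^{\otimes d}$, and the resulting long exact sequence reduces the comparison to the Nisnevich cohomology of $\sH$. By the Bloch--Ogus theorem, $H^p_\Nis(Y_i, \sH)$ is computed by the Kato complex of weight $d$ on $Y_i$; the Kato conjecture, proved by Kerz--Saito in \cite{KS12}, asserts that this complex is exact except at its top term, so $H^p_\Nis(Y_i, \sH) = 0$ for $p \neq d$. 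The long exact sequence then shows that the realization map on each component $Y_i$ is an isomorphism for all $i \neq 2d+1$.

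Assembling these termwise comparisons, the map of spectral sequences is an isomorphism on every $E_1^{p,q}$ term except possibly $E_1^{0,2d+1}$ in the finite-field case with $s=d$, and that term can only contribute to the abutment in total degree $i = 2d+1$. This yields the stated isomorphism. The main difficulty lies in the analysis of the sheaf $\sH$ over finite fields, where the decisive input is the Kato conjecture established by Kerz--Saito.
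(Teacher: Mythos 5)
Your argument is correct and rests on exactly the same inputs as the paper's proof — the Beilinson--Lichtenbaum theorem, the \'etale cohomological dimension bounds for varieties over finite and algebraically closed fields, and the Kato conjecture of \cite{KS12} to handle the exceptional term in the finite field case with $s=d$. The only cosmetic differences are that you reduce to the smooth case via the cdh/proper descent \v{C}ech nerve spectral sequence rather than the paper's stepwise Mayer--Vietoris induction, and that you use the sheaf-theoretic formulation $\Lambda(s) \simeq \tau_{\le s}R\pi_*\mu_m^{\otimes s}$ rather than comparing the Nisnevich and \'etale coniveau spectral sequences termwise using Beilinson--Lichtenbaum for the residue fields.
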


\begin{proof} We give the proof for $Y/k$ smooth first and then deduce the general case by
 a Mayer--Vietoris sequence.

\smallskip

{\it Case $Y/k$ smooth:} 
Consider the coniveau spectral sequences, respectively in the Nisnevich and in the \'etale topology:
\begin{align}\label{eq.coniv1}
 E_{1}^{i,j}(s) =  \bigoplus_{y\in Y^{(i)}} H^{j-i,s-i}(y) &\Rightarrow
 H^{i+j,s}(Y)\rlap{\text{,}}\\
 \label{eq.coniv2} E_{1,\et}^{i,j}(s) =  \bigoplus_{y\in Y^{(i)}} H^{j-i,s-i}_\et(y) &\Rightarrow
 H^{i+j,s}_\et(Y)\rlap{\text{.}}
\end{align}
Recall the following facts.
\begin{itemize}
\item[(i)](Beilinson--Lichtenbaum conjecture) For any field $F$ of characteristic prime to $m$ and any $i\le j$, the realization map \[R_\et^{i,j}: H^{i,j}(F)
  \to H^{i,j}_\et(F) \]
is an isomorphism \cite{Voe11}.
\item[(ii)] $E_1^{i,j}(s)=0$ for $j>s$, by the definition of the motivic complex \cite[Lec.~3]{MVW}.
\item[(iii)](Cohomological dimension) 
  $E_{1,\et}^{i,j}(s)=0  $ for $j>d+1$ if $k$ is finite, for $j>d$  if $k$ is algebraically
  closed. 
\item[(iv)](Kato conjecture)  For $k$ finite and $i\ne d$, we have $E_{2,\et}^{i,d+1}(d)=0$
  \cite{KS12}.
\end{itemize}
Comparing the spectral sequences and using (i)-(iv), we get that
the \'etale realization map
induces an isomorphism
\[E_{\infty}^{i,j}(s)\xrightarrow{\sim} E_{\infty,\et}^{i,j}(s)  \]
if $s\ge d$ except possibly if $s=i=d$ and $j=d+1$. 
This proves
Proposition~\ref{prop.compet} for $Y$ smooth.

\smallskip

{\it Case $Y/k$ snc variety:} 
Let $Y=Y_1\cup \dots \cup Y_r$ be the decomposition into irreducible components and set
$Y'= Y_2\cup \dots \cup Y_r$. We proceed by induction on $r$. The case $r=1$ is shown above,
so assume $r>1$. Consider the \'etale realization morphism of exact Mayer--Vietoris sequences
\begin{align*}
\xymatrix{
H^{i-1,s}(Y_1)\oplus H^{i-1,s}(Y') \ar[d] \ar[r] &  H^{i-1,s}(Y_1\cap Y') \ar[d]  \ar[r] & H^{i,s}(Y)
\ar@{-}[r] \ar[d]  &  \\
H^{i-1,s}_\et(Y_1)\oplus H_\et^{i-1,s}(Y')  \ar[r]  &  H_\et^{i-1,s}(Y_1\cap Y')   \ar[r] & H_\et^{i,s}(Y)
\ar@{-}[r] &
}
 \\
\xymatrix{
\ar[r] & H^{i,s}(Y_1)\oplus H^{i,s}(Y')\ar[r]\ar[d]  & H^{i,s}(Y_1\cap Y')\ar[d]  \\
\ar[r] & H^{i,s}_\et(Y_1)\oplus H^{i,s}_\et(Y')\ar[r] & H^{i,s}_\et(Y_1\cap Y')\rlap{\text{.}}
}
\end{align*}
By the five lemma, the middle vertical map is an isomorphism, since, by the induction assumption, the other vertical
maps are isomorphisms.
\end{proof}

For $k$ algebraically closed, consider the composite map
\eq{eq.degmap}{
\deg: \C(Y) \to \bigoplus_{1\le i\le r} \CH_0(Y_i)_\Lambda \to \Lambda^{r}
}
in which the first map
is induced by the canonical maps $\Zcyc_0(Y^\sm)\to \Zcyc_0( Y_i)$ and the second
map is the direct sum of the usual degree maps of zero-cycles of the irreducible
components of $Y$.

Combining Proposition~\ref{prop.compet}  with Theorem~\ref{thm.motcomp}, we get

\begin{cor}\label{cor:isoclosed}
For $k$ finite or algebraically closed
and $Y/k$ a projective snc variety,
the \'etale cycle map $\C(Y)\to H^{2d,d}_\et(Y)$ is
an isomorphism.

For $k$ algebraically closed and $Y/k$ a projective snc variety, the degree map \eqref{eq.degmap} is
an isomorphism.
\end{cor}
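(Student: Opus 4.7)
\smallskip

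The plan is to combine the two preceding main results. First, the isomorphism $\Gamma:\C(Y) \xrightarrow{\sim} H^{2d,d}(Y)$ of Theorem~\ref{thm.motcomp} (extended to $\Lambda=\Z/m\Z$ via Remark~\ref{rmk.fico}) reduces the first assertion to showing that the \'etale realization $R^{2d,d}_\et:H^{2d,d}(Y)\to H^{2d,d}_\et(Y)$ is an isomorphism. This is exactly Proposition~\ref{prop.compet} applied with $(i,s)=(2d,d)$, since the only exceptional pair $(2d+1,d)$ is not this one. To check that the composite $R^{2d,d}_\et\circ\Gamma$ really is the \'etale cycle map, I would use the characterization of $\Gamma$ in Theorem~\ref{thm.motcomp}: it sends a closed point $y\in Y^\sm$ to its motivic Gysin class, and the \'etale realization carries this to the \'etale Gysin class by functoriality of Gysin maps under realization.

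\smallskip

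For the second assertion, assume $k$ is algebraically closed, and let $Y_1,\ldots,Y_r$ be the irreducible components of $Y$. The strategy is to first establish $H^{2d,d}_\et(Y)\cong\bigoplus_{i=1}^r H^{2d,d}_\et(Y_i)\cong\Lambda^r$, and then identify the composite $\C(Y)\xrightarrow{\sim}\Lambda^r$ with~\eqref{eq.degmap}. I would proceed by induction on $r$: setting $Y'=Y_2\cup\cdots\cup Y_r$, the intersection $Y_1\cap Y'$ has dimension at most $d-1$, so Artin's bound on \'etale cohomological dimension gives $H^j(Y_1\cap Y',\Lambda(d))=0$ for all $j\ge 2d-1$. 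The Mayer--Vietoris sequence then collapses to an isomorphism $H^{2d,d}_\et(Y)\xrightarrow{\sim} H^{2d,d}_\et(Y_1)\oplus H^{2d,d}_\et(Y')$, and induction yields the direct sum decomposition. Each $H^{2d,d}_\et(Y_i)\cong\Lambda$ by Poincar\'e duality, since $Y_i$ is smooth, projective, and integral of dimension $d$ over the algebraically closed field $k$.

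\smallskip

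It remains to match the resulting isomorphism $\C(Y)\xrightarrow{\sim}\Lambda^r$ with the degree map~\eqref{eq.degmap}. A closed point $y\in Y^\sm$ lies on a unique component $Y_i$, and by the characterization of $\Gamma$ its class in $H^{2d,d}_\et(Y)$ is the \'etale Gysin class; under the Mayer--Vietoris decomposition it sits in the $i$\nobreakdash-th summand and corresponds, via the trace, to $[k(y):k]=\deg(y)\in\Lambda$. The main point to verify is that the iterated Mayer--Vietoris decomposition $H^{2d,d}_\et(Y)\cong\bigoplus_i H^{2d,d}_\et(Y_i)$ really agrees with the map $(i_1^*,\ldots,i_r^*)$ induced by the inclusions $i_j:Y_j\hookrightarrow Y$; this is a routine compatibility between the Mayer--Vietoris boundary and restriction to components, which combined with the previous step identifies the composite with~\eqref{eq.degmap}.
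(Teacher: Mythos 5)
Your proof is correct and follows essentially the same route as the paper: the first assertion is the composite of the isomorphism $\Gamma$ from Theorem~\ref{thm.motcomp} (extended via Remark~\ref{rmk.fico}) with the realization isomorphism of Proposition~\ref{prop.compet} at bidegree $(2d,d)$, and the second assertion then reduces to computing $H^{2d,d}_\et(Y)$ by Mayer--Vietoris over the components together with the trace isomorphism for each smooth projective $Y_i$ and matching the result with the degree map~\eqref{eq.degmap}. The paper leaves these computations implicit (``Combining Proposition~\ref{prop.compet} with Theorem~\ref{thm.motcomp}''), so your write-up simply fills in the intended details.
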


As a further application, we discuss zero-cycles over certain $1$-dimensional and
$2$-dimensional local fields. Over $1$-dimensional local fields, this reproves a result of
\cite{SS10}.

\begin{prop}[Saito--Sato] \label{prop.filo}
Let $K$ be a local field with finite residue field~$k$ of characteristic $p$.
Let $\Lambda=\Z/m\Z$ with $m$
prime to $p$. For any variety $X_K/K$, the group $\CH_0(X_K)_\Lambda$ is finite.
\end{prop}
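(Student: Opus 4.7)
The plan is to reduce, by localisation and Gabber's $\ell'$-alterations, to the situation of Theorem~\ref{thm.main} over a finite extension of $K$ of degree prime to $m$, and then read off the finiteness of $\CH_0$ from that of \'etale cohomology of the special fibre over the residual finite field via Corollary~\ref{cor:isoclosed}.

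Decomposing $\Lambda = \Z/m\Z$ into its $\ell$-primary parts, I may fix a single prime $\ell \neq p$ and work with coefficients $\Z/\ell^a\Z$. Using the localisation exact sequence for $\CH_0$ associated to a projective compactification of $X_K$ and induction on $\dim X_K$, I reduce to the case where $X_K$ is projective and integral. I would then invoke the Illusie--Temkin refinement of Gabber's theorem on $\ell'$-alterations: there exist a finite extension $L/K$ of degree prime to $\ell$ and a proper surjective generically finite morphism $f : X_L \to X_K \times_K L$ of degree prime to $\ell$ with $X_L/L$ smooth projective, extending to a regular projective flat scheme $X/\sO_L$ whose reduced special fibre $Y$ is an snc divisor on $X$. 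A pullback--pushforward argument, using that $[L:K]$ and $\deg f$ are both units in $\Z/\ell^a\Z$, gives a surjection $\CH_0(X_L)_{\Z/\ell^a\Z} \twoheadrightarrow \CH_0(X_K)_{\Z/\ell^a\Z}$, so it suffices to establish the finiteness of the former.

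Since $\sO_L$ is an excellent henselian discrete valuation ring with finite residue field $k_L$, hypothesis~(i) of Theorem~\ref{thm.main} is satisfied and yields an isomorphism $\CH_1(X)_{\Z/\ell^a\Z} \xrightarrow{\sim} \C(Y)$. The canonical surjection $\CH_1(X)_{\Z/\ell^a\Z} \twoheadrightarrow \CH_0(X_L)_{\Z/\ell^a\Z}$, obtained by taking the Zariski closure in $X$ of an integral zero-cycle on $X_L$, together with the identification $\C(Y) \cong H^{2d}(Y_\et, (\Z/\ell^a\Z)(d))$ provided by Corollary~\ref{cor:isoclosed}, exhibits $\CH_0(X_L)_{\Z/\ell^a\Z}$ as a quotient of that \'etale cohomology group. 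The latter is finite by the classical finiteness of \'etale cohomology of a proper variety of dimension $d$ over a finite field with coefficients of order invertible in the field.

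The main obstacle is the first reduction: arranging Gabber's $\ell'$-alteration so that the degree of the alteration \emph{and} the degree $[L:K]$ are simultaneously prime to $\ell$, while also obtaining semistability of $X/\sO_L$. Once this is in place, the remainder of the argument is a formal composition of the main theorem of this paper with standard finiteness of \'etale cohomology over finite fields.
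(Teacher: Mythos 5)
Your proof is correct and follows essentially the same route as the paper: the paper compresses your reduction (compactify, apply Gabber's $\ell'$-alterations to obtain a regular projective flat model with snc special fibre over a base of degree prime to $\ell$, and use a degree argument to transfer finiteness back) into a single sentence invoking ``a simple resolution technique and the Gabber--de Jong alteration theorem,'' and then, just as you do, combines the surjection $\CH_1(X)_\Lambda \twoheadrightarrow \CH_0(X_K)_\Lambda$ with Theorem~\ref{thm.main}, Corollary~\ref{cor:isoclosed}, and finiteness of \'etale cohomology over a finite field.
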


\begin{proof}
By a simple resolution technique and the Gabber--de Jong alteration theorem \cite[Thm.\ 0.3]{ILO}, we can assume, without loss of generality,
that there is a regular scheme $X$ which is projective and flat over $A$ such that the
reduced special fiber $Y$ of $X$ is a simple normal crossings divisor and such that the
generic fiber of $X$ is $X_K$.
Here $A$ is the ring of integers in $K$.
The left-hand side map in
\[
\CH_0(X_K)_\Lambda \twoheadleftarrow \CH_1(X)_\Lambda \xrightarrow{\sim} \C(Y)
\]
is surjective and the right-hand side map is an isomorphism by Theorem~\ref{thm.main}.
We conclude by the isomorphism $\C(Y) \xrightarrow{\sim} H^{2d}(Y_\et,
\Lambda(d))$ from Corollary~\ref{cor:isoclosed} and the finiteness of \'etale cohomology.
\end{proof}

\begin{prop}\label{prop.laur}
Let $K$ be the field of formal Laurent power series  $  k((\pi))$ with $[k:\Q_p]<\infty$, $A=k[[\pi]]$. 
For any variety $X_K/K$ and for $\Lambda=\Z/m\Z$ with $m$ prime to $p$, the
group $\CH_0(X_K)_\Lambda$ is finite.
\end{prop}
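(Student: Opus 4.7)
The plan is to follow the strategy of Proposition~\ref{prop.filo}, adapted to the present setting where the residue field of $A$ is the $p$-adic local field $k$ rather than a finite field. First, by the Gabber--de Jong alteration theorem \cite[Thm.\ 0.3]{ILO} together with a standard resolution technique, I may assume without loss of generality that $X_K$ is the generic fiber of a regular projective flat $A$-scheme $X$ whose reduced special fiber $Y$ is an snc divisor of dimension~$d$. Since $k$ has characteristic zero, the ring $A = k[[\pi]]$ is equicharacteristic, so part~(iv) of Theorem~\ref{thm.main} applies and yields an isomorphism $\CH_1(X)_\Lambda \cong \C(Y)$. Together with Theorem~\ref{thm.motcomp} and Remark~\ref{rmk.fico}, this identifies $\CH_1(X)_\Lambda$ with $H^{2d,d}(Y)$. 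Via the surjection $\CH_1(X)_\Lambda \twoheadrightarrow \CH_0(X_K)_\Lambda$ coming from Fulton's specialization \cite[20.3.1]{Ful84}, it thus suffices to show that $H^{2d,d}(Y)$ is finite.

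Unlike in Proposition~\ref{prop.filo}, Corollary~\ref{cor:isoclosed} is not directly available since $k$ is neither finite nor algebraically closed. Instead, I would proceed by induction on the number $r$ of irreducible components of $Y$. Writing $Y = Y_1 \cup Y'$ with $Y'$ the union of the remaining components, the abstract blow-up triangle yields a Mayer--Vietoris exact sequence in cdh cohomology
\[
H^{2d-1,d}(Y_1 \cap Y') \to H^{2d,d}(Y) \to H^{2d,d}(Y_1) \oplus H^{2d,d}(Y') \to H^{2d,d}(Y_1 \cap Y').
\]
The last term vanishes by Proposition~\ref{prop.calcmotco}(i), as $\dim(Y_1 \cap Y') \le d-1$. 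Each $H^{2d,d}(Y_i) = \CH_0(Y_i)_\Lambda$ is finite because $Y_i$ is smooth and projective over the $1$-dimensional local field $k$, so Proposition~\ref{prop.filo} applies. By induction on $r$, it remains only to establish the finiteness of $H^{2d-1,d}(Z)$ for each snc projective subvariety $Z\subset Y$ of dimension~$\le d-1$.

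To handle those remaining terms, I would iterate the same Mayer--Vietoris procedure on $Z$. Using the Gersten resolution for motivic cohomology of smooth varieties, the relevant contributions may be rewritten as Nisnevich cohomology groups of the form $H^{d-1}(W_{\Nis},\sK^M_d/m)$ with $W$ a smooth projective $k$-variety of dimension exactly $d-1$ (lower-dimensional strata contribute zero by cohomological dimension). Via the Bloch--Kato sheaf isomorphism $\sK^M_d/m \simeq \sH^d_{\et}(\mu_m^{\otimes d})$ and the Leray spectral sequence for the morphism $\epsilon\colon W_{\et}\to W_{\Nis}$, such a group fits, up to finite subquotients, inside the finite étale cohomology group $H^{2d-1}_{\et}(W, \mu_m^{\otimes d})$ and is therefore finite.

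I expect this last step to be the main obstacle: in the finite residue field setting, the Kato conjecture \cite{KS12} underpinning Proposition~\ref{prop.compet} yields a clean étale comparison uniformly in all bidegrees, whereas no analogous vanishing is directly available over a $p$-adic field. The required finiteness of Nisnevich cohomology of Milnor $K$-sheaves in top degree therefore has to be extracted by hand, combining the finiteness of étale cohomology of smooth projective varieties over $p$-adic fields with structural results for units modulo $m$ in $p$-adic residue fields.
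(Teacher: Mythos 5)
Your reduction coincides with the paper's through the key intermediate step: alteration, the isomorphism $\CH_1(X)_\Lambda \cong \C(Y) \cong H^{2d,d}(Y)$, and Mayer--Vietoris on the closed covering of $Y$ by irreducible components and their intersections, reducing everything to finiteness of $H^{d_W+d,d}(W)$ for smooth projective $W/k$ of dimension $d_W\le d$. The divergence occurs in handling those terms, and there are two genuine gaps.

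First, the parenthetical claim that ``lower-dimensional strata contribute zero by cohomological dimension'' is false for $d_W=d-2$. By the coniveau spectral sequence, $H^{d_W+d,d}(W)\cong \mathrm{coker}\bigl[\oplus_{w\in W_{(1)}}K^M_{d-d_W+1}(k(w))\to \oplus_{w\in W_{(0)}}K^M_{d-d_W}(k(w))\bigr]$. The closed-point residue fields $k(w)$ are finite extensions of $\Q_p$, so $K^M_i(k(w))/m$ vanishes only for $i>2$, i.e.\ the strata of dimension $d_W<d-2$ do vanish, but $d_W=d-2$ produces $K^M_2(k(w))/m\cong\Lambda$ at every closed point, and the resulting cokernel is Kato homology $H_0(KC^{(0)}(W))$. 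Its finiteness is itself a substantial theorem (Kerz--Saito via a model of $W$ over $\mathcal O_k$); you cannot simply discard these strata.

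Second, for $d_W=d-1$ the group you want to control, namely $H^{d-1}(W_{\Nis},\sK^M_d/m)\cong SK_1(W)_\Lambda$, is not a subquotient of $H^{2d-1}_{\et}(W,\mu_m^{\otimes d})$ in the direction your Leray argument requires. In the spectral sequence $E_2^{p,q}=H^p(W_\Nis,R^q\epsilon_*\mu_m^{\otimes d})\Rightarrow H^{p+q}_\et(W,\mu_m^{\otimes d})$, the term $E_\infty^{d-1,d}$ is a subquotient both of $E_2^{d-1,d}$ and of the finite group $H^{2d-1}_\et$; but the kernel of $E_2^{d-1,d}\surj E_\infty^{d-1,d}$ consists of images of incoming differentials from groups such as $H^{d-1-r}(W_\Nis,R^{d+r-1}\epsilon_*\mu_m^{\otimes d})$, which have no a priori finiteness. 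So finiteness of the abutment does not propagate back to $E_2^{d-1,d}$. You acknowledge this is the main obstacle, and indeed it is precisely where the paper invokes a hard input: the finiteness of $SK_1(W)_\Lambda$ for smooth projective $W$ over a $p$-adic field, due to Forr\'e \cite{Fo15}. The paper's case-by-case list -- vanishing for $d_W<d-2$, Kato homology for $d_W=d-2$, $SK_1$ for $d_W=d-1$, and $\CH_0$ (Proposition~\ref{prop.filo}) for $d_W=d$ -- is the content you would need to supply, and it does not follow from the spectral-sequence manipulation you sketch.
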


\begin{proof}
We can assume, without loss of generality, that $\Lambda=\Z/\ell\Z$ for a prime~$\ell$.
As the map $\CH_0(X_K)_\Lambda\to \CH_0(X_{X_{K(\boldsymbol{\mu}_\ell)}})_\Lambda $ is injective, we can
also assume that $A$ contains a
primitive $\ell$-th root of unity, which we fix from here on.
By a simple resolution technique and the Gabber--de Jong alteration theorem \cite[Thm.\ 0.3]{ILO}, we can assume
that there exists a regular scheme $X$, projective and flat over $A$,
whose
reduced special fiber $Y/k$ is a simple normal crossings divisor and
whose
generic fiber is $X_K$.

The left-hand side map in
\[
\CH_0(X_K)_\Lambda \twoheadleftarrow \CH_1(X)_\Lambda \xrightarrow{\sim} \C(Y)
\]
is surjective and the right-hand side map is an isomorphism by Theorem~\ref{thm.main}.

Let $d=\dim(Y)$.
Note that $\C(Y)\cong H^{2d,d}(Y)$ by Theorem~\ref{thm.motcomp}. By a
Mayer--Vietoris sequence argument for a closed covering similar to the proof of Proposition~\ref{prop.compet}, we reduce
to showing that $H^{d_W+d,d}(W)$ is finite for any smooth projective variety $W/k$ of dimension
$d_W\le d$.

The coniveau spectral sequence for $W$, see \eqref{eq.coniv1}, degenerates to an
isomorphism 
\[
H^{d_W+d,d}(W) \cong {\rm coker}\mkern3mu [ \oplus_{w\in W_{(1)}} K^M_{d-d_W+1}(k(w)) \to  \oplus_{w\in W_{(0)}} K^M_{d-d_W}(k(w)) ]\rlap{\text{,}}
\]
from which we deduce:
\begin{itemize}
\item[(i)] For $d_W<d-2$, we have $ H^{d_W+d,d}(W)=0$.
\item[(ii)] For $d_W=d-2$, there is an isomorphism  $$H^{d_W+d,d}(W)\cong H_0(KC^{(0)}(W))$$ onto
  Kato homology with $\Lambda$-coefficients, see e.g.\ \cite{KS12}.
\item[(iii)] For $d_W=d-1$, there is an isomorphism $H^{d_W+d,d}(W)\cong SK_1(W)_\Lambda$, see
  \cite{Fo15}.
\item[(iv)] For $d_W=d$, there is an isomorphism $H^{d_W+d,d}(W)\cong \CH_0(W)_\Lambda$.
\end{itemize}
Indeed, for a finite extension $F/\Q_p$ the Milnor $K$-group with finite coefficients $K^M_i(F)$ vanishes for $i>2$ \cite[Prop.\
VI.7.1]{W13}, hence~(i).
For~(ii), we note that by the Bloch--Kato conjecture
\cite{Voe11}
and the fixed $\ell$-th root of unity,
there is an isomorphism $K^M_i(F)\cong
H^{i}(F_\et,\Lambda(i-1))$ for any field $F/k$.

It is known, see the introduction to \cite{KS12} and \cite[Rmk.~5.2]{Fo15}, that the groups
$SK_1(W)_\Lambda$ and $H_0(KC^{(0)}(W))$ are finite. Indeed, for the latter, we can assume,
without loss of generality (use~\cite{ILO}, Theorem~X.2.4), that there is a regular model
$\mathcal W$
of $W$ over $\mathcal O_k$ whose reduced closed fibre is a snc divisor. Then, by
\cite[Thm.~0.4]{KS12}, the group $H_0(KC^{(0)}(W))$ is isomorphic to a finitely generated
combinatorial homology group which depends on the closed fibre of~$\mathcal W$.
The finiteness of $\CH_0(W)_\Lambda$ is shown in Proposition~\ref{prop.filo}.
\end{proof}

\section{Open problems}\label{sec.prob}

Let us keep the notation of Section~\ref{sec:rest}
and consider the ring of coefficients $\Lambda=\Z/m\Z$ with $m$ arbitrary, i.e.\ we
allow $p|m$ in positive characteristic. 
 Set $Y_n=X\otimes_A A/(\pi^n)$, where $\pi$ is a
prime element of the discrete valuation ring $A$. Recall that $K$ denotes the quotient
field of $A$.   
The isomorphism~\eqref{eq.isomil} motivates us to define an ad hoc version of `motivic cohomology' of $Y_n$ in bidegree
$(2d,d)$ as 
\[
\C (Y_n)= H^d(Y_\Nis,\mathcal K^M_{Y_n,d} ),
\]
where $K^M_{Y_n,d}$ is the Milnor $K$-sheaf with $\Lambda$-coefficients as defined in Section~\ref{sec:proof}.

Assuming the Gersten conjecture for Milnor $K$-theory of $X$, we get a compatible system of homomorphism
\eq{eq.modp}{
\CH_1(X)_\Lambda \cong H^d(X_\Nis, \mathcal K^M_{X,d})\to  \C(Y_n) \quad \quad (n\ge 1).
}
It is not difficult to show that the pro system $(\C(Y_n))_n$ is pro constant if $\ch(K)=0$.
As a potential generalization of our main result, Theorem~\ref{thm.main}, we conjecture:

\begin{conj}\label{conj.modp}
For $\ch(K)=0$,
the map \eqref{eq.modp} is an isomorphism of pro abelian groups. Here we think of the
group $\CH_1(X)$ as a constant pro group.
\end{conj}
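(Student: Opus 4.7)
The plan is to adapt the two-step strategy of Theorem~\ref{thm.main} to the Milnor $K$-theoretic setting of the conjecture: for each $n\ge 1$, construct a restriction homomorphism $\rho_n:\CH_1(X)_\Lambda \to \C(Y_n)$ and an inverse $\gamma_n:\C(Y_n)\to \CH_1(X)_\Lambda$, compatibly in $n$, and conclude via the (separately asserted) pro-constancy of $(\C(Y_n))_n$ in characteristic zero. The forward map $\rho_n$ is essentially immediate from the assumed Gersten conjecture: composing the isomorphism $\CH_1(X)_\Lambda\cong H^d(X_\Nis,\sK^M_{X,d})$ with the Nisnevich pullback $H^d(X_\Nis,\sK^M_{X,d})\to H^d(Y_{n,\Nis},\sK^M_{Y_n,d})$ along the closed immersion $i_n:Y_n\hookrightarrow X$ yields the compatible system~\eqref{eq.modp}. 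Uniqueness on the cycle level follows from the moving lemma of \cite{GLL13}, exactly as in Theorem~\ref{thm.res}.

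The substance lies in constructing $\gamma_n$. The first step is an explicit geometric presentation of $\C(Y_n)$ parallel to the Type~1/Type~2 presentation of $\C(Y)$ in Section~\ref{sec.cg1}: combining a Bloch--Gabber--Kato style analysis of $\sK^M_{Y_n,d}$ with a coniveau spectral sequence, one expects that $\C(Y_n)$ is generated by zero-cycles on $Y^\sm=Y_n^\sm$ modulo divisors of suitably normalized rational functions on curves in $Y_n$. Given such a presentation, the canonical lifting of zero-cycles (Proposition~\ref{key.prop}) and the Bertini-type arguments of Sections~\ref{sec:lift}--\ref{sec:inverse} should carry over with only minor modifications to produce $\tilde\gamma_n:\Zcyc_0(Y^\sm)_\Lambda\to\CH_1(X)_\Lambda$, which one then shows factors through the type~1 and type~2 relations. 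A Milnor $K$-theoretic analog of Proposition~\ref{prop.lamde}, asserting compatibility of the Gysin map with the geometric lifting process, would then force $\gamma_n$ to be two-sided inverse to $\rho_n$.

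The hard part will be establishing the presentation of $\C(Y_n)$ and the analog of Proposition~\ref{prop.lamde} when $p\mid m$. Here the Milnor $K$-sheaves $\sK^M_{Y_n,d}$ are substantially more delicate than in the $\ell$-adic setting: by Bloch--Gabber--Kato they are controlled by logarithmic de Rham--Witt sheaves (and, more recently, by syntomic and prismatic constructions), and the clean Suslin--Voevodsky motivic machinery used in Sections~\ref{sec.gy}--\ref{sec.motive} is unavailable. The necessary coniveau computations amount essentially to a Gersten-type statement for $\sK^M_{Y_n,d}$ on the singular snc scheme~$Y_n$, which is not presently known. The hypothesis $\ch(K)=0$ should enter precisely here, to rule out the wild ramification pathologies that make the equicharacteristic-$p$ case harder still, and which are the principal reason the statement remains a conjecture pending further development of $p$-adic motivic cohomology for snc schemes over mixed characteristic bases.
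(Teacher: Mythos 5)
This statement is labeled as a \emph{conjecture} in the paper, and the paper does not give a proof of it; it is offered at the very end as an open problem generalizing Theorem~\ref{thm.main}, together with the one-line remark that the pro-system $(\C(Y_n))_n$ is pro-constant when $\ch(K)=0$ and that the conjecture is related to one of Colliot-Th\'el\`ene. So there is no proof in the paper against which to compare yours, and your write-up is correspondingly not a proof but a program, which you yourself acknowledge in the final paragraph.

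With that caveat, your sketch does track the architecture the authors would presumably want: a forward map $\rho_n$ via the assumed Gersten isomorphism $\CH_1(X)_\Lambda\cong H^d(X_\Nis,\sK^M_{X,d})$ followed by restriction (this is literally how the map~\eqref{eq.modp} is built in the paper, and note that it is conditional on Gersten for $X$, which in mixed characteristic is itself open), a type~1/type~2 style presentation of $\C(Y_n)$, a lift $\tilde\gamma_n$ as in Proposition~\ref{key.prop}, and an inverse $\gamma_n$. Two places where I would tighten the framing. First, the role of $\ch(K)=0$: the paper states it as ensuring pro-constancy of $(\C(Y_n))_n$, so that the target is effectively the single group attached to the first infinitesimal neighborhood; you mention this only in passing, and then attribute $\ch(K)=0$ mainly to ruling out wild ramification, which conflates two distinct issues. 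Second, and more fundamentally, when $p\mid m$ the entire motivic apparatus of Sections~\ref{sec.gy}--\ref{sec.motive} (rigidity $\sH^j(\Lambda(j))_y\cong K^M_j(k(y))$, the cdh-descent, the Gysin formalism, the identification $\C(Y)\cong H^{2d,d}(Y)$) is unavailable, and Proposition~\ref{key.prop} itself relies on \'etale cohomology with $m$ prime to $p$. So it is not merely the presentation of $\C(Y_n)$ and the analogue of Proposition~\ref{prop.lamde} that are missing: even the canonical lifting step and the injectivity argument in the $d=1$ base case would need a $p$-adic replacement (logarithmic de Rham--Witt or syntomic), which is exactly why the statement is a conjecture rather than a theorem. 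Your last paragraph names this correctly; I would simply make explicit that the gap reaches back to the very first step of the inverse construction, not just to the presentation of $\C(Y_n)$.
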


Conjecture~\ref{conj.modp} is related to a conjecture of
Colliot-Th\'el\`ene \cite{CT95}. In fact, we expect that for $\Lambda=\Z$ and
$[K:\Q_p]<\infty$, there should be an  isomorphism 
\[\CH_0(X_K)\cong \Z\oplus \Z_p^w \oplus (\text{finite group}) \oplus (\text{divisible group}).\] 
Here, the integer $w$ should be the $\Z_p$-dimension of the Lie group of $K$\nobreakdash-points of the Albanese variety of $X_K$. The
decomposition is not canonical, but the individual summands are unique up to isomorphism.

\smallskip

In another direction, we expect that an analogue of Theorem~\ref{thm.main} for ``higher zero-cycles''
holds. For a noetherian scheme $S$ of finite Krull dimension, let
$H^{i,j}(S) $ denote motivic cohomology with $\Lambda$-coefficients,  defined for example using
the Eilenberg--MacLane spectrum constructed in \cite{Sp12}. 
We consider the restriction homomorphism of motivic cohomology groups
\eq{eq.reij}{
H^{i,j}(X) \to H^{i,j}(Y) .
}
From here on, we assume that $\Lambda=\Z/m\Z$ with $m$ prime to $p$. 
Even with these coefficients, we cannot expect that the map \eqref{eq.reij} is an
isomorphism for general $i$ and $j$, as the following example shows. 

\begin{ex}[Rosenschon--Srinivas]
In \cite{RS07}, an elliptic curve $E$ over a $p$-adic local field $K$ with finite residue field, such that
$\CH^2(E^3)_\Lambda$ is infinite for certain $m$ ($\Lambda=\Z/m\Z$),
is constructed. By general conjectures, we expect that there exists a model
$X/A$ as above with $X_K\cong E^3$, that there is a surjection $H^{4,2}(X)
\twoheadrightarrow \CH^2(X_K)_\Lambda$ and that $H^{4,2}(Y)$ is finite. Granting this, the
restriction map \eqref{eq.reij} cannot be injective in this case.
\end{ex}

\begin{conj}
Under the condition $j=d$ (higher zero-cycles) or under the condition $i-j=d$ (zero-cycles
with coefficients in Milnor $K$-theory), the map \eqref{eq.reij} is an isomorphism.
\end{conj}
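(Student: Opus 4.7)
The plan is to extend the proof of Theorem~\ref{thm.main} to the wider range of bidegrees in the conjecture; both conjectural regimes contain the top case $(i,j) = (2d,d)$. The overall strategy is to identify both sides with Nisnevich cohomology of Milnor $K$-theory sheaves and then imitate Sections~\ref{sec:lift} and~\ref{sec:inverse}. On the $Y$ side, I would extend the rigidity argument of Section~\ref{sec:proof}: by Voevodsky's Beilinson--Lichtenbaum theorem, the stalks of $\sH^j(\Lambda(j)_{Y,\Nis})$ at points of $Y$ agree with Milnor $K$-theory of the residue fields, and combined with the cdh cohomological dimension~$d$ (Proposition~\ref{prop.calcmotco}(i)) one obtains
\[
H^{i,j}(Y) \;\cong\; H^{i-j}(Y_\Nis, \sK^M_{Y,j})
\]
for $j = d$ or $i - j = d$. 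On the $X$ side, the analogous identification $H^{d+j,j}(X) \cong H^d(X_\Nis, \sK^M_{X,j})$ would proceed via the Gersten conjecture for Milnor $K$-theory of the regular scheme $X$.

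After these identifications, the restriction map becomes the natural pullback along the closed immersion $Y \hookrightarrow X$. To prove it is an isomorphism, I would generalize the cycle group $\C(Y)$ of Section~\ref{sec.cg1} to allow Milnor $K$-theoretic coefficients: generators are Milnor symbols supported at closed points of $Y^\sm$ (that is, elements of Rost's group of Chow cycles with coefficients in Milnor $K$-theory), and the relations are ``type 1'' and ``type 2'' divisors of Milnor $K$-theoretic rational functions on snc subcurves of $Y$. The motivic interpretation (Theorem~\ref{thm.motcomp}) should extend to this enriched setting, yielding a geometric presentation of $H^d(Y_\Nis, \sK^M_{Y,j})$. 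One then constructs an inverse to the restriction map by lifting Milnor symbols on $Y^\sm$ to Milnor $K$-theoretic one-cycles on $X$, imitating the Bertini-based argument of Proposition~\ref{key.prop} and Bloch's trick for non-regular lifts.

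The main obstacle is the Gersten conjecture for Milnor $K$-theory of regular excellent schemes in mixed characteristic, which is currently only known in equicharacteristic (Kerz); this is precisely why Theorem~\ref{thm.res} already requires additional hypotheses in the top case. An unconditional approach would either have to settle this Gersten conjecture in greater generality, or rely on a robust theory of mixed motives over a base (of the kind alluded to in Section~\ref{sec:rest} and Section~\ref{sec.prob}) which would furnish the restriction map and its inverse in a uniform way, bypassing the need for the Gersten machinery. A secondary difficulty is to prove the higher analogue of the Gabber--Liu--Lorenzini moving lemma needed to generate $H^{i,j}(X)$ from ``generic'' Milnor $K$-theoretic one-cycles in general position with respect to $Y$.
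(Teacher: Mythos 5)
This statement is labeled as a \emph{conjecture} in the paper, and the paper offers no proof of it; it appears in the closing ``Open problems'' section precisely because the authors do not know how to prove it. There is therefore no proof in the paper to compare against.

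Your proposal, correspondingly, is not a proof but a strategy sketch, and to your credit you say so: you explicitly concede that the two central ingredients --- the Gersten conjecture for Milnor $K$-theory of regular excellent schemes in mixed characteristic, and a higher analogue of the Gabber--Liu--Lorenzini moving lemma for Milnor $K$-theoretic one-cycles in general position --- are currently unavailable. These are not technical details to be filled in but genuine open problems; as long as they are unresolved, the argument does not get off the ground even in outline. The intermediate steps you describe (identifying $H^{i,j}(Y)$ with $H^{i-j}(Y_{\Nis},\sK^M_{Y,j})$ via Beilinson--Lichtenbaum rigidity and cdh cohomological dimension; enriching $\sC(Y)$ to Rost-style cycles with Milnor $K$-theory coefficients; extending the presentation of Theorem~\ref{thm.motcomp} and the inverse construction of Sections~\ref{sec:lift}--\ref{sec:inverse}) are indeed the natural continuation of the paper's method and are consistent with the authors' own remarks in Section~\ref{sec:rest} and Section~\ref{sec.prob} about what a good theory of motives over a base would buy. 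But ``here is the shape a proof should take, modulo two open conjectures'' is exactly the state of knowledge the paper already records, not a proof of the statement. The conjecture remains open, and your submission does not change that.
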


\section*{Acknowledgements}

We thank Spencer Bloch for helpful discussions and the referee for his or her very careful reading.

\bibliographystyle{plain}

\renewcommand\refname{References}

\end{document}